\newcommand{\R}{\mathbb{R}}
\newcommand{\eps}{\varepsilon}
\theoremstyle{plain}
\newtheorem{defi}{Definition}[section]
\newtheorem{proposition}[defi]{Proposition}
\newtheorem{theorem}[defi]{Theorem}
\newtheorem{corollary}[defi]{Corollary}
\newtheorem{lemma}[defi]{Lemma}
\newtheorem{remark}[defi]{Remark}
\theoremstyle{definition}
\theoremstyle{remark}
\numberwithin{equation}{section}
\numberwithin{figure}{section}
\numberwithin{table}{section}
\begin{document}

\title[Nontrivial Touchdown Sets]{No touchdown at points of small permittivity
 and nontrivial touchdown sets  for the MEMS problem}

\author{Carlos Esteve}
\address{Universit\'{e} Paris 13, Sorbonne Paris Cit\'{e}, CNRS UMR 7539,
Laboratoire Analyse, G\'{e}om\'{e}trie et Applications, 93430 Villetaneuse, France.
\newline {\tt E-mail address: esteve@math.univ-paris13.fr}
}

\author{Philippe Souplet}
\address{Universit\'{e} Paris 13, Sorbonne Paris Cit\'{e},  CNRS UMR 7539,
Laboratoire Analyse, G\'{e}om\'{e}trie et Applications, 93430 Villetaneuse, France.
\newline {\tt E-mail address: souplet@math.univ-paris13.fr}
}

\date{\today}

\begin{abstract} 
We consider a well-known model for micro-electromechanical systems (MEMS)
with variable dielectric permittivity, involving a parabolic equation with singular nonlinearity.
We study the touchdown, or quenching, phenomenon.
Recently, the question whether or not touchdown can occur at zero points of the permittivity profile $f$,
which had long remained open, was answered negatively
for the case of interior points.

The first aim of this article is to go further by considering the same question at points of positive but small permittivity.
We show that, in any bounded domain, touchdown cannot occur
at an interior point where the permittivity profile is suitably small.
We also obtain a similar result in the boundary case, 
under a smallness assumption on 
 $f$ in a neighborhood of the boundary.
This allows in particular to construct $f$ producing touchdown sets concentrated near any given sphere.

Our next aim is to obtain more information on the structure and properties of the touchdown set.
In particular, we show that the touchdown set need not in general be localized 
near the maximum points of the permittivity profile $f$.
In the radial case in a ball, we actually show the existence of ``M''-shaped profiles $f$ for which the touchdown set is located 
{\it far away} from the maximum points of $f$
and we even obtain strictly convex $f$ for which touchdown occurs only at the unique {\it minimum} point of $f$.
These results give analytical confirmation of some numerical simulations from the book
{\it [P.~Esposito, N. Ghoussoub, Y. Guo,
Mathematical analysis of partial differential equations modeling electrostatic MEMS,
Courant Lecture Notes in Mathematics, 2010]}
and solve some of the open questions therein.
They also show that some kind of smallness condition as above
cannot be avoided in order to rule out touchdown at a point.

On the other hand, we construct profiles $f$ producing more complex behaviors:
 in any bounded domain the touchdown set may be concentrated near two arbitrarily given points,
or two arbitrarily given $(n-1)$-dimensional spheres in a ball.
These examples are obtained as a consequence of stability results
for the touchdown time and touchdown set under small perturbations of the permittivity profile.

\end{abstract}

\maketitle

\section{Introduction}

\subsection{Mathematical problem and physical background}

We consider the problem
\begin{equation}\label{quenching problem}
\left\lbrace \begin{array}{rrl}
u_t - \Delta u = f(x)(1-u)^{-p}, & t>0, & x\in \Omega, \\
u = 0, & t>0, &  x\in \partial \Omega,\\
u(0,x) = 0, & x \in \Omega, &
\end{array} \right.
\end{equation}
where $\Omega$ is a smooth bounded domain in $\mathbb{R}^n$, $n\geq 1$, $p>0$ and $f\in E$, where
\begin{equation}\label{hypf}
E=\bigl\{f:\overline\Omega\to[0,\infty);\ f \hbox{ is H\"older continuous}\bigr\}.
\end{equation}
Problem (\ref{quenching problem}) with $p=2$ is a known model for micro-electromechanical devices (MEMS)
and has received a lot attention in the past 15 years.
An idealized version of such device consists of two conducting plates, connected to an electric circuit.
The lower plate is rigid and fixed while the upper one is elastic and fixed only at the boundary.
Initially the plates are parallel and at unit distance from each other.
When a voltage (difference of potential between the two plates) is applied, the upper
plate starts to bend down and, if the voltage is large enough,
the upper plate eventually touches the lower one. This is called {\it touchdown} 
phenomenon.
Such device can be used for instance as an actuator, a microvalve (the touching-down part closes the valve),
or a fuse.

In the mathematical model, $u=u(t,x)$ measures the vertical deflection of the upper plate
and the function $f(x)$ represents the dielectric permittivity of the material
which, as a key feature, may be possibly inhomogeneous.
(Actually $f$ is also proportional to the -- constant -- applied voltage.)

It is well known that problem (\ref{quenching problem}) admits a unique maximal classical solution~$u$.
We denote its maximal existence time by $T=T_f\in (0,\infty]$. Moreover,
under some largeness assumption on $f$, it is known that the maximum of $u$ reaches the value $1$
at a finite time, so that $u$ ceases to exist in the classical sense, i.e. $T<\infty$.
This property, known as quenching, is the mathematical counterpart of the touchdown phenomenon. 

A point $x = x_0 \in\overline\Omega$
is called a {\it touchdown} or {\it quenching point} if there exists 
a sequence $\{(t_n, x_n)\} \in (0, T)\times \Omega$ such that
$$x_n\to x_0,\ \ t_n\uparrow T\ \ \hbox{and}\ \ u(x_n,t_n)\to 1 \ \hbox{as}\ n\to\infty.$$
The set of all such points is closed. It is
called the {\it touchdown} or {\it quenching set}, denoted by $\mathcal{T}=\mathcal{T}_f
 \subset \overline\Omega$. 

MEMS problems, including system (\ref{quenching problem}) and the related touchdown issues,
 have received considerable attention in the physical and engineering as well as 
in the mathematical communities. We refer to \cite{EGG}, \cite{JAP-DHB02} for more details on the physical background, 
and to, e.g., \cite{G97}, \cite{FT00} \cite{PT01}, \cite{YG-ZP-MJW05}, \cite{FMPS07}, \cite{GG07}, \cite{GG08}, \cite{GHW08}, \cite{G08}, 
\cite{G08A},  \cite{KMS08},  \cite{GK}, \cite{YZ10}, \cite{G14}, \cite{GS} for mathematical studies.
See also \cite{Phi87}, \cite{L89}, \cite{DL89}, \cite{G1}, \cite{FLV}, \cite{FG93} for earlier mathematical work on the case of constant $f$.

As a question of particular interest,
it has long remained open whether touchdown could occur at zero points of the permittivity profile.
This has been answered negatively in \cite{GS} for the case of interior points.
This is by no means obvious since, for the analogous blowup problem $u_t - \Delta u = f(x)u^p$
with $f(x)=|x|^\sigma$, examples of solutions blowing up at the origin have been constructed in \cite{FT00}, \cite{GS11}
for suitable $\sigma>0$, $p>1$ and suitable initial data $u_0\ge 0$.

To go further, natural questions are then:

$\bullet$ can one rule out touchdown at points of positive but {\it small} permittivity ?

$\bullet$ can one obtain more information on the structure and properties of the touchdown set ?

These are the main motivations of the present article.

\subsection{Results (I): no touchdown at points of small permittivity}

Our first main result shows that touchdown cannot occur at an interior point of small permittivity $f(x_0)$,
and we provide a suitable smallness condition in terms of $f$ and $x_0$.
In the sequel, we denote by
$$\delta (x) := \text{dist} (x,\partial\Omega), \qquad x\in\overline\Omega,$$
the function distance to the boundary.

\begin{theorem}[No touchdown at interior points of small permittivity]\label{local result dim n}
Let $p>0$, $\Omega\subset \mathbb{R}^n$ a smooth bounded domain and $f\in E$.
Assume
\begin{equation}\label{condMB}
\left\lbrace \begin{array}{rrl}
&&T_f\le M,\quad \|f\|_\infty\le M,\quad f\ge r\chi_B, \\
\noalign{\vskip 1mm}
&&\hbox{where $M, r>0$ and $B\subset\Omega$ is a ball of radius $r$.}
\end{array} \right.
\end{equation}
There exists $\gamma_0>0$ depending only on $p, \Omega, M, r$ such that, 
for any $x_0\in \Omega$, if
\begin{equation}\label{hyp local result dim n}
f(x_0)<\gamma_0{\hskip 1pt}\delta^{p+1}(x_0),
\end{equation}
then $x_0\not\in \mathcal{T}_f$.
\end{theorem}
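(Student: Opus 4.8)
The plan is to show that $u$ stays bounded away from $1$ in a full space-time neighborhood of $x_0$: since $u$ is nondecreasing in $t$ (by the maximum principle, because $u(0,\cdot)=0$ and the right-hand side of \eqref{quenching problem} is nonnegative), it suffices to exhibit $\eta,\rho>0$, depending only on $p,\Omega,M,r$ and on $x_0$, such that $1-u(t,x)\ge\eta$ for all $t\in(0,T_f)$ and all $x\in B_\rho(x_0)$; by the very definition of the touchdown set this gives $x_0\notin\mathcal T_f$. The whole argument is a comparison argument for $v:=1-u$, which solves $v_t-\Delta v=-f(x)v^{-p}$ with $v=1$ on $\partial\Omega$, $v(0,\cdot)=1$ and $0<v\le1$. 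Since the reaction $v\mapsto-f(x)v^{-p}$ is nondecreasing in $v>0$ (and locally Lipschitz away from $v=0$), the parabolic comparison principle applies to this equation on any region where the relevant functions stay positive; hence it is enough to build a subsolution $\underline v$ — i.e. $\underline v_t-\Delta\underline v+f(x)\underline v^{-p}\le0$ on $\{\underline v>0\}$ — lying below $v$ on the parabolic boundary of $\{\underline v>0\}\times(0,T_f)$ (which is automatic where $\underline v=0$, and where $\underline v\le1$ on $\partial\Omega$ and at $t=0$), and satisfying $\underline v\ge\eta>0$ on $B_\rho(x_0)$.

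For the construction near $x_0$ I would use a rescaled self-similar profile $\underline v(x)=\eta\,\Phi\!\big(|x-x_0|/\ell\big)$, where $\Phi$ is the positive increasing solution of the radial ODE $\Phi''+\tfrac{n-1}{\sigma}\Phi'=\Phi^{-p}$, $\Phi(0)=1$, $\Phi'(0)=0$. For such a $\underline v$ the subsolution inequality reduces exactly to $\eta^{p+1}\ell^{-2}\ge f(x)$ on the region where the profile is used, and it is the choice of the spatial scale $\ell$ comparable to $\delta(x_0)$, together with the tuning of the amplitude $\eta$, that produces the threshold $f(x_0)<\gamma_0\,\delta^{p+1}(x_0)$ — the exponent $p+1$ being precisely the one dictated by the scaling balance $\Delta\underline v\sim f\,\underline v^{-p}$. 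Away from $x_0$ this local profile has to be completed into a subsolution valid all the way up to $\partial\Omega$ (where $v=1$, so $\underline v\le1$ is harmless); there the bound $\|f\|_\infty\le M$ enters, typically through a torsion-type comparison function ($1-cM\phi$ with $-\Delta\phi=1$, $\phi|_{\partial\Omega}=0$), and it may be convenient to pass to $w:=v^{p+1}$, for which $w_t-\Delta w$ equals $-(p+1)f(x)$ up to a favorable-sign gradient term, so that (together with a Bernstein-type gradient estimate) one obtains good linear barriers. The uniform quantities in \eqref{condMB} then serve to keep all constants — $\gamma_0$, $\eta$, $\rho$ — depending only on $p,\Omega,M,r$: $T_f\le M$ bounds the time interval, while $f\ge r\chi_B$ is presumably used to provide a universal lower bound on $1-u$ away from the center region so that the local profile can actually be glued in.

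The step I expect to be the main obstacle is exactly this gluing, i.e. closing the comparison globally. Because interior touchdown does occur for other profiles $f$ — as the ``M''-shaped examples later in the paper show — there is no universal interior lower bound on $1-u$, so the subsolution cannot be localized in a ball buried inside $\Omega$; it must be propagated out to $\partial\Omega$. Reconciling this with a subsolution that is forced to dip down to a small value near $x_0$ — which is possible precisely when $f(x_0)$ is small compared with $\delta^{p+1}(x_0)$ — while keeping the threshold $\gamma_0$ independent of $f$ itself (in particular of its Hölder modulus), is the delicate point, and it is where the precise form of hypothesis \eqref{hyp local result dim n} and the universal bounds \eqref{condMB} must be used carefully. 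The remaining verifications — the differential inequality through the transition region, and the lateral boundary inequality with the correct constants — are the more routine, if somewhat lengthy, computations.
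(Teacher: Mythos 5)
Your reduction (monotonicity of $u$ in $t$, passing to $v=1-u$, and seeking a barrier that stays $\ge\eta$ near $x_0$) is sound, and your local profile near $x_0$ is close in spirit to the paper's local comparison (Lemma \ref{basic supersol}, which uses $w=y(t)\psi(x)$ on a small ball $B(x_0,b)$). But the step you yourself flag as ``the main obstacle'' --- propagating a time-independent subsolution out to $\partial\Omega$ --- is not merely delicate: it is impossible under the hypotheses. Any $\underline v\ge0$ with $\Delta\underline v\ge f\,\underline v^{-p}$ on $\{\underline v>0\}$ is subharmonic there, so by the maximum principle the connected component of $\{\underline v>0\}$ containing $x_0$ must reach $\partial\Omega$; it cannot be a compactly contained pocket. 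Now take the radial ``M''-shaped profiles of Corollary \ref{cor 1}(ii): $f\ge A$ on an annulus $\{|\,|x|-r_0|<\eps/2\}$ and $f(0)$ tiny, with $A=M$ large. These satisfy \eqref{condMB} and \eqref{hyp local result dim n} at $x_0=0$, and the theorem correctly asserts $0\notin\mathcal T_f$. Yet any component of $\{\underline v>0\}$ joining $0$ to $\partial B_R$ must cross the annulus, where $\Delta\underline v\ge A\underline v^{-p}\ge A$ (since $\underline v\le1$); integrating $(\sigma^{n-1}\underline v')'\ge\sigma^{n-1}A$ across the annulus, and using convexity together with $0\le\underline v\le1$ to bound $\underline v'$ before the annulus, forces $\underline v>1$ before reaching $\partial\Omega$ once $A\gtrsim\eps^{-1}(R-r_0)^{-1}$. (Nor can the barrier ``vanish across'' the annulus: for $p\ge1$ the energy identity for $\underline v''\ge f\underline v^{-p}$ shows a subsolution cannot vanish at an interior point of $\{f>0\}$ while being positive nearby, and for any $p$ a buried positivity component is excluded as above.) So the global stationary barrier does not exist precisely in the situations the theorem is designed to cover, and your boundary piece $1-cM\phi$ is anyway only positive in a layer of width $O(1/(cM))$ near $\partial\Omega$, leaving the touchdown region uncovered.

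The missing idea is that the lateral boundary data for the local comparison must be \emph{time-dependent and vanish at $t=T$}, so that the barrier can coexist with touchdown elsewhere. The paper supplies this through the quantitative type I estimate $1-u\ge\gamma\,\delta(x)(T-t)^{1/(p+1)}$ with $\gamma=\gamma(p,\Omega,M,r)$ (Proposition \ref{type I qualitative}), proved not by sub/supersolutions for $u$ but by applying the maximum principle to the Friedman--McLeod functional $J=u_t-\eps a(x)h(u)$, with $a\sim\delta^{p+1}$ built from a harmonic function raised to the power $p+1$; this is where \eqref{condMB} (through the lower bound on $u_t$ and the upper bound on $T$) really enters, and where the exponent $p+1$ in \eqref{hyp local result dim n} comes from ($k\sim\gamma\delta(x_0)$ on $\partial B(x_0,b)$ and the local criterion $\|f\|_{L^\infty(B(x_0,b))}<k^{p+1}/(p+1)$). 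Your heuristic $f\le\eta^{p+1}\ell^{-2}$ with $\ell\sim\delta(x_0)$ does not by itself produce the threshold $\gamma_0\delta^{p+1}(x_0)$. Without an ingredient of type I kind, the comparison cannot be closed.
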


As a drawback of Theorem~\ref{local result dim n}, boundary points are not covered,
and the threshold value vanishes when $x_0$ approaches the boundary.
Actually, it remains an open problem whether touchdown can occur on the boundary,
including at boundary points of zero permittivity.
Some partial results can be found in \cite{G08}, \cite{GS},
where $f(x)$ is assumed to either be monotonically decreasing
or to vanish sufficiently fast, as $x$ approaches the boundary.
Our second main result gives another contribution to that question.
It shows that touchdown can be localized in any compact subdomain of $\Omega$
under the assumption that $f$ is small enough outside this subdomain.
It is thus of a more global nature than the local criterion in Theorem~\ref{local result dim n} for interior points.
As a consequence, it rules out touchdown on the boundary
when $f$ is small enough on a neighborhood of the boundary.
We stress that for this result, unlike in  \cite{G08}, \cite{GS}, we do not require any monotonicity or decay of $f$ near $\partial\Omega$.

\begin{theorem}[No touchdown for small permittivity near the boundary]\label{global result dim n}
 Let $p>0$, $\Omega\subset \mathbb{R}^n$ a smooth bounded domain and $f\in E$.
Assume \eqref{condMB}. There exists $\gamma_0 >0$ depending only on $p,\Omega,M,r$ such that, for any $\omega \subset\subset \Omega$, if
\begin{equation}\label{hyp global result dim n}
\sup_{x\in\overline\Omega\setminus \omega}f(x)< \gamma_0 {\hskip 1pt} {\rm dist}^{p+1}(\omega,\partial\Omega),
\end{equation}
then $\mathcal{T}_f\subset \omega$.
\end{theorem}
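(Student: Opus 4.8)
The plan is to reduce the global statement to the already-established local one, Theorem~\ref{local result dim n}, by showing that under hypothesis \eqref{hyp global result dim n} every point of $\overline\Omega\setminus\omega$ satisfies the pointwise smallness condition \eqref{hyp local result dim n}, and then to separately handle the boundary points, which Theorem~\ref{local result dim n} does not reach. For interior points $x_0\in\Omega\setminus\omega$, observe that since $x_0\notin\omega$ and $\omega\subset\subset\Omega$, the triangle inequality gives $\delta(x_0)={\rm dist}(x_0,\partial\Omega)\ge {\rm dist}(\omega,\partial\Omega)$ whenever $x_0$ lies in the "far" region; more care is needed for points of $\Omega\setminus\omega$ that are close to $\partial\Omega$, where $\delta(x_0)$ is small and \eqref{hyp local result dim n} is a stronger requirement than \eqref{hyp global result dim n}. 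So the local theorem alone does not immediately suffice near $\partial\Omega$, and this is exactly the gap that forces a genuinely global argument.

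The natural approach for the boundary strip is to mimic the proof of Theorem~\ref{local result dim n} but with a comparison function adapted to a \emph{neighborhood of $\partial\Omega$} rather than to a single interior point. Concretely, I would introduce the open set $\Omega_\eta=\{x\in\Omega:\ \delta(x)<\eta\}$ for a fixed small $\eta$ (chosen depending on $\Omega$ so that $\delta$ is smooth there and so that $\omega\cap\Omega_\eta=\emptyset$, which is possible since ${\rm dist}(\omega,\partial\Omega)>0$), and construct a stationary (or self-similar-in-time) supersolution $w$ of the equation $w_t-\Delta w\ge f(x)(1-w)^{-p}$ on $(0,M)\times\Omega_\eta$ with $w\ge u$ on the parabolic boundary and $\sup w<1$ on $\overline{\Omega_\eta}$. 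The point is that on $\Omega_\eta$ the source term is bounded by $\sup_{\overline\Omega\setminus\omega}f\le\gamma_0\,{\rm dist}^{p+1}(\omega,\partial\Omega)$, which can be made arbitrarily small by choosing $\gamma_0$ small; a barrier built from the torsion-type function or from powers of $\delta$ (e.g. $w=c\,\delta(\eta-\delta)$ or $w=1-(1-c)\phi$ for a suitable $\phi$ vanishing on $\partial\Omega$ and equal to $1$ on $\{\delta=\eta\}$) will then satisfy the differential inequality provided $\gamma_0$ is small relative to $p,\Omega,M,r$ and $\eta$. On $\{\delta=\eta\}$ one matches $w$ with an upper bound for $u$ coming from \eqref{condMB} — here one uses that $T_f\le M$ and $\|f\|_\infty\le M$ give, via standard parabolic estimates, a bound $u\le 1-\sigma_0$ on $[0,M]\times\{\delta\ge\eta\}\setminus$(a small neighborhood of any touchdown point), or more simply a bound on compact subsets away from where we have not yet excluded touchdown; this part requires a bootstrap using Theorem~\ref{local result dim n} applied at points just inside $\{\delta=\eta\}$.

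The comparison principle then yields $u\le w<1$ on $(0,M)\times\Omega_\eta$, hence $u$ stays bounded away from $1$ there and no touchdown occurs in $\Omega_\eta\cup\partial\Omega$; combined with the interior case handled by Theorem~\ref{local result dim n} on the remaining region $\Omega\setminus(\omega\cup\Omega_\eta)$ (where $\delta\ge\eta$ so \eqref{hyp local result dim n} follows from \eqref{hyp global result dim n} after shrinking $\gamma_0$), we conclude $\mathcal{T}_f\subset\omega$. \textbf{Main obstacle:} the delicate coupling between the boundary barrier and the interior criterion — specifically, obtaining the required uniform upper bound $\sup_{\overline{\Omega_\eta}}w<1$ together with $w\ge u$ on $\{\delta=\eta\}$, since a priori $u$ could be close to $1$ somewhere on $\{\delta=\eta\}$. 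The resolution is to choose $\eta$ and $\gamma_0$ simultaneously: first fix $\eta$ so small that, by \eqref{condMB} and interior parabolic regularity together with Theorem~\ref{local result dim n}, $u$ is bounded away from $1$ on a slightly larger strip, then choose $\gamma_0$ small enough (depending on this $\eta$, hence ultimately only on $p,\Omega,M,r$) to close the supersolution inequality. Keeping track that all constants depend only on $p,\Omega,M,r$ — and not on $\omega$ — is the bookkeeping heart of the argument.
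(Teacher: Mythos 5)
Your reduction to Theorem~\ref{local result dim n} on the region $\{\delta\ge\eta\}\setminus\omega$ plus a barrier on a boundary strip identifies the right difficulty, but the way you propose to close the strip argument has a genuine gap, and it is precisely the gap that the paper's key tool --- the quantitative type~I estimate of Proposition~\ref{type I qualitative} --- is designed to fill. Two problems. First, for the barrier on $\{\delta<\eta\}$ you need $f$ to be small there, hence $\{\delta<\eta\}\cap\omega=\emptyset$, which forces $\eta\le{\rm dist}(\omega,\partial\Omega)$; so $\eta$ cannot be ``chosen depending on $\Omega$'' as you state, it depends on $\omega$, and every constant in your barrier (and ultimately $\gamma_0$) then inherits this dependence, whereas the theorem requires $\gamma_0=\gamma_0(p,\Omega,M,r)$ uniformly in $\omega$. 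Second, and more seriously, your static supersolution needs a uniform-in-time bound $u\le1-\sigma_0$ on the inner boundary $\{\delta=\eta\}$ with $\sigma_0$ controlled by $p,\Omega,M,r$ alone. Theorem~\ref{local result dim n} is purely qualitative (its conclusion is only $x_0\notin\mathcal{T}_f$); extracting a uniform $\sigma_0$ from it by compactness is possible in principle, but the resulting constant depends on the modulus of continuity of $f$ and on $\omega$ in an uncontrolled way, so the ``choose $\eta$, then $\gamma_0$'' bookkeeping you describe does not close, and the argument is in any case close to circular since you use the conclusion (no touchdown near $\{\delta=\eta\}$) to set up the comparison that proves it.

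The paper avoids both issues by comparing on all of $D=\Omega\setminus\omega$ with inner boundary $\Gamma=\partial\omega$ and a \emph{time-dependent} supersolution $w=y(t)\psi(x)$ with $y(t)=1-k(T-t)^{\frac{1}{p+1}}$ (Lemma~\ref{basic supersol}(ii), where $\psi$ is harmonic in $D$, equal to $1$ on $\partial\omega$ and to $1-\sigma$ on $\partial\Omega$). The required control on $\Gamma$ is then not a uniform bound away from $1$ but exactly the type I estimate \eqref{EstimTypeI}: since $\delta(x)\ge{\rm dist}(\omega,\partial\Omega)$ on $\partial\omega$, one gets $u\le1-k(T-t)^{\frac{1}{p+1}}$ there with $k=\gamma\,{\rm dist}(\omega,\partial\Omega)$, and the supersolution inequality requires precisely $\|f\|_{L^\infty(\Omega\setminus\omega)}<k^{p+1}/(p+1)=\gamma_0\,{\rm dist}^{p+1}(\omega,\partial\Omega)$ with $\gamma_0=\gamma^{p+1}/(p+1)$; the matching $(p+1)$-power scaling of the hypothesis and of the barrier is what makes $\gamma_0$ independent of $\omega$. (A final small step you omit: Lemma~\ref{basic supersol}(ii) yields $\mathcal{T}_f\subset\overline\omega$ together with $\mathcal{T}_f\cap\partial\Omega=\emptyset$; the inclusion in the \emph{open} set $\omega$ then follows by applying Theorem~\ref{local result dim n} at each point of $\partial\omega$.)
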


In view of Theorems~\ref{local result dim n} and \ref{global result dim n},
it is a natural question whether smallness conditions, such as \eqref{hyp local result dim n} and \eqref{hyp global result dim n},
 are actually necessary, 
or whether touchdown could be shown to occur only at or near the maximum points of the permittivity profile $f$.
This, among other related issues, is the subject of our next subsection,
where a number of results on the structure and properties of the touchdown set are obtained.


\subsection{Results (II): Nontrivial touchdown sets and ``M''-shaped profiles.}

We will pay special attention to the following class of permittivity profiles.
For $\Omega=B_R\subset\mathbb{R}^n$ ($n\geq 1$), we call ``M''-shaped permittivity profile a function $f$ such that
\begin{equation}\label{defMshaped}
\begin{array}{ll}
&\hbox{$f$ is radially symmetric, nondecreasing in $|x|$ on $[0,L]$} \\
\noalign{\vskip 1mm}
&\hbox{and nonincreasing in $|x|$ on $[L,R]$, for some $L\in (0,R)$.}
\end{array}
\end{equation}

In the book \cite[Section 7.4]{EGG}, for particular ``M''-shaped profiles, numerical simulations were carried out, which suggest some interesting phenomena regarding  the location of touchdown points. In this paper we are able to confirm some of them by rigorous analytical arguments.
In this connection, we shall construct ``M''-shaped profiles, and variants thereof, giving rise to various types of touchdown sets: single-point, touchdown set concentrated near a sphere, near two points, near two spheres.
We point out that such properties may be useful in the practical design of MEMS devices,
at least on a qualitative level.

To begin with, as a consequence of Theorems~\ref{local result dim n} and \ref{global result dim n}, we have the following corollary.

\begin{corollary}\label{cor 1}
Let $p>0$, $\Omega=B_R\subset\R^n$.

(i) {\rm (Touchdown containing a sphere.)}
Let $f\in E$ be an ``M''-shaped profile, i.e.~(\ref{defMshaped}) holds, and assume \eqref{condMB}.
If $f(0)$ is small enough (depending only on $p,n,R,M,r$), then $0$ is not a touchdown point.
In particular, $\mathcal{T}_f$ contains an $(n-1)$-dimensional sphere. 

(ii) {\rm (Touchdown concentrated near a given sphere.)}
Let $r>0$ and $0<\eps<\min(r,R-r)$.
There exist two-bump, ``M''-shaped profiles $f$ such that $T_f<\infty$ and 
$$\mathcal{T}_f\subset \{ r-\eps< |x| < r+\eps\}.$$
More precisely, there exist $\eta, A>0$, depending only on $p,R,r,\eps$, such that this is true 
for any radially symmetric $f\in E$ satisfying
$$
\left\{\begin{array}{llll}
f(x)\ge A, &\text{for} \ |x|\in[r-\eps/2,r+\eps/2], \\
\noalign{\vskip 2mm}
f(x)\le\eta, & \text{for} \ |x|\in[0,r-\eps]\cup [r+\eps,R].
\end{array}
\right.
$$
\end{corollary}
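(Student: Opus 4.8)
The plan is to derive both parts directly from Theorems \ref{local result dim n} and \ref{global result dim n}, the only point requiring care being how to verify hypothesis \eqref{condMB} uniformly across the family of profiles.

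For part (i), I would argue as follows. Since $f$ is ``M''-shaped and \eqref{condMB} holds, the constant $\gamma_0$ of Theorem \ref{local result dim n} is fixed and depends only on $p,n,R,M,r$. Taking $x_0=0$, we have $\delta(0)={\rm dist}(0,\partial B_R)=R$, so condition \eqref{hyp local result dim n} reads $f(0)<\gamma_0 R^{p+1}$, which holds provided $f(0)$ is small enough in the stated sense. Theorem \ref{local result dim n} then gives $0\notin\mathcal{T}_f$. To conclude that $\mathcal{T}_f$ contains an $(n-1)$-sphere, I would use that $T_f<\infty$ (which follows from \eqref{condMB}, since $T_f\le M$), so $\mathcal{T}_f$ is nonempty, together with the radial symmetry of the problem: since $f$ is radial and the initial data and domain are radial, the solution $u(t,\cdot)$ is radial for all $t$, hence $\mathcal{T}_f$ is invariant under rotations. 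A nonempty, rotation-invariant closed subset of $\overline{B_R}$ not containing the origin must contain an entire orbit $\{|x|=\rho\}$ for some $\rho\in(0,R]$, which is the desired $(n-1)$-sphere.

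For part (ii), the strategy is to apply Theorem \ref{global result dim n} with the annular subdomain $\omega:=\{r-\eps<|x|<r+\eps\}$, so that $\mathrm{dist}(\omega,\partial\Omega)=R-r-\eps>0$ is a fixed positive quantity. The subtlety is that Theorem \ref{global result dim n} requires \eqref{condMB} to hold with constants $M,r$ that are uniform over the family, in order that its $\gamma_0$ be a fixed number; one then wants \eqref{hyp global result dim n}, namely $\sup_{\overline\Omega\setminus\omega}f<\gamma_0(R-r-\eps)^{p+1}$, to follow from the smallness assumption $f\le\eta$ on $[0,r-\eps]\cup[r+\eps,R]$. So I would first choose the lower-bound constant: since $r$ is already prescribed, fix the inner ball $B$ of radius $r$ to be, say, a ball of radius $r$ centered near the sphere $|x|=r$ (shrinking $\eps$ if necessary so that such a ball lies in $\Omega$ and meets the region $\{|x|\in[r-\eps/2,r+\eps/2]\}$ in a set of positive measure, then enlarging $A$), and require $f\ge A\ge r$ there, which gives $f\ge r\chi_B$. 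Next, the largeness of $A$ on the middle bump, by the standard comparison/eigenfunction argument for quenching (available from \eqref{condMB} and the cited literature, or more simply: a sufficiently large $A$ on a fixed ball forces $T_f\le M$ by comparison with a solution of the ODE-type problem), yields the bound $T_f\le M$ and $\|f\|_\infty\le M$ once we also impose $\eta\le M$ and control $A$ in terms of $M$; the precise bookkeeping is routine. With \eqref{condMB} secured with constants depending only on $p,R,r,\eps$, Theorem \ref{global result dim n} produces its $\gamma_0$, and choosing $\eta:=\gamma_0(R-r-\eps)^{p+1}$ makes \eqref{hyp global result dim n} hold, whence $\mathcal{T}_f\subset\omega$. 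Finally $T_f<\infty$ follows from $T_f\le M$.

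The main obstacle I anticipate is not any deep estimate — everything substantive is in Theorems \ref{local result dim n} and \ref{global result dim n} — but rather the uniformity bookkeeping in part (ii): one must choose the parameters $A,\eta$ (and the auxiliary radius/center of the ball $B$) in the right logical order so that the constants $M,r$ in \eqref{condMB}, and hence the resulting $\gamma_0$, depend only on $p,R,r,\eps$ and not circularly on $A,\eta$ themselves. Concretely, $M$ and the ball $B$ should be fixed first (using a fixed large enough $A_0$ to guarantee quenching before time $M$ via comparison), then $A:=\max(A_0,r,\cdots)$ and $M:=\max(M,\cdots)$ adjusted, and only then is $\gamma_0$ determined and $\eta$ chosen last; I would present this as a short explicit list of choices to make the argument transparent.
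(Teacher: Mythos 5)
Your proposal is correct and takes essentially the same route as the paper, whose proof of this corollary is a one-liner: (i) is Theorem~\ref{local result dim n} applied at $x_0=0$ (where $\delta(0)=R$) combined with radial symmetry and $\mathcal{T}_f\neq\emptyset$, and (ii) is Theorem~\ref{global result dim n} applied to the annulus $\omega=\{r-\eps<|x|<r+\eps\}$ together with Lemma~\ref{upper estimate for T} to get $T_f<\infty$. One small repair to your bookkeeping in (ii): the ball $B$ of \eqref{condMB} should be taken of radius comparable to $\eps$ (say $\eps/4$, centered at a point with $|x|=r$) so that it lies entirely inside $\{|x|\in[r-\eps/2,r+\eps/2]\}$ where $f\ge A$ is guaranteed -- one cannot ``shrink $\eps$'', since $\eps$ is prescribed and determines both the hypothesis region and the conclusion.
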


\eject

\begin{figure}[h]
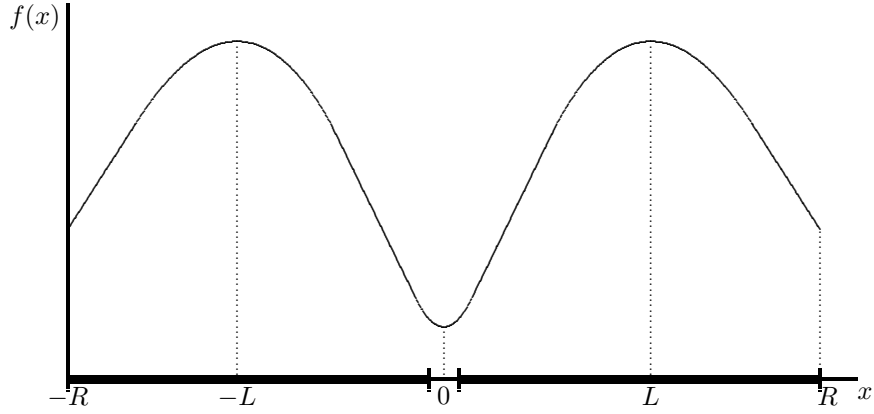

\[
\beginpicture
\setcoordinatesystem units <1cm,1cm>
\setplotarea x from -6 to 6, y from -1 to 5

\setdots <0pt>
\linethickness=1pt
\putrule from -5 0 to 5.5 0
\putrule from -5 0 to -5 5

\setlinear
\plot -5 2  -4.1 3.4 /
\plot -1.5 3.4  -0.38 1.1 / 
\plot 5 2  4.1 3.4 /
\plot 1.5 3.4  0.38 1.1 / 

\setquadratic
\plot -4.1 3.4   -2.75 4.5 -1.5 3.4 /
\plot 4.1 3.4   2.75 4.5 1.5 3.4 /
\plot -0.38 1.1  0 0.7  0.38 1.1 / 

\setdots <2pt>
\setlinear
\plot -2.75 0  -2.75 4.5 /
\plot 0 0  0 0.7 /
\plot 2.75 0  2.75 4.5 /
\plot 5 0  5 2 /

\put {$x$} [lt] at 5.5 -.1
\put {$R$} [ct] at 5.1 -.1
\put {$L$}   [ct] at 2.75 -.1
\put {$0$}   [ct] at 0 -.1
\put {$-L$}  [ct] at -2.75 -.1
\put {$-R$}  [ct] at -5 -.1
\put {$f(x)$} [rt] at -5.1 5

\setdots <0pt>
\linethickness=3pt
\putrule from -5 0 to -0.2 0
\putrule from 0.2 0 to 5 0 

\linethickness=1pt
\putrule from -5 .15 to -5 -.15 
\putrule from 0.2 .15 to 0.2 -.15 
\putrule from 5 .15 to  5 -.15 
\putrule from -0.2 .15 to  -0.2 -.15 

\endpicture
\]
\caption{Illustration of Corollary \ref{cor 1}(i) --  in this and the subsequent figures, the touchdown set 
must be a subset of the fat lines}
\label{fig cor 1}
\end{figure}

\begin{figure}[h]
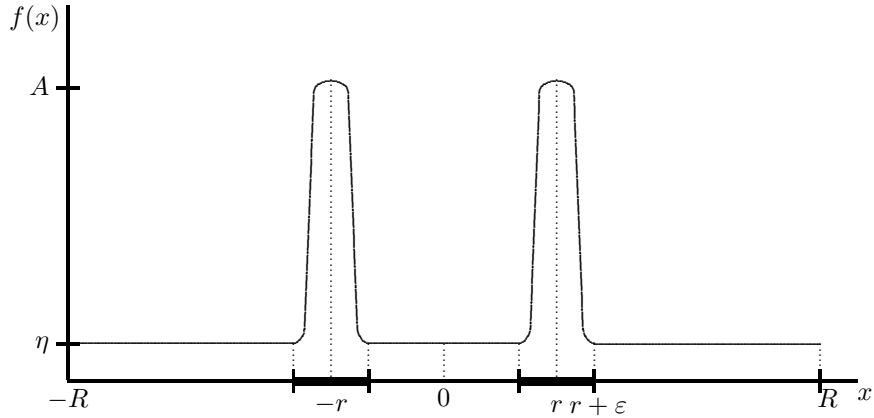

\[
\beginpicture
\setcoordinatesystem units <1cm,1cm>
\setplotarea x from -6 to 6, y from -1 to 5

\setdots <0pt>
\linethickness=1pt
\putrule from -5 0 to 5.5 0
\putrule from -5 0 to -5 5

\setlinear
\plot -1.27 3.8  -1.15 0.7 / 
\plot -1 0.5  1 0.5 / 
\plot -1.85 0.7  -1.73 3.8 / 
\plot -5 0.5  -2 0.5 / 
\plot 1.27 3.8  1.15 0.7 / 
\plot 1 0.5  1 0.5 / 
\plot 1.85 0.7  1.73 3.8 / 
\plot 5 0.5  2 0.5 / 

\setquadratic
\plot -1.5 4  -1.32 3.94  -1.27 3.8 /
\plot -1.15 0.7  -1.1 0.57  -1 0.5 / 
\plot -2 0.5  -1.9 0.57  -1.85 0.7 / 
\plot -1.5 4  -1.68 3.94  -1.73 3.8 /
\plot 1.5 4  1.32 3.94  1.27 3.8 /
\plot 1.15 0.7  1.1 0.57  1 0.5 / 
\plot 2 0.5  1.9 0.57  1.85 0.7 / 
\plot 1.5 4  1.68 3.94  1.73 3.8 /

\setdots <2pt>
\setlinear
\plot -1.5 0  -1.5 4 /
\plot -1 0  -1 0.5 /
\plot -2 0  -2 0.5 /
\plot 0 0  0 0.5 /
\plot 1.5 0  1.5 4 /
\plot 1 0  1 0.5 /
\plot 2 0  2 0.5 /
\plot 5 0  5 0.5 /

\put {$x$} [lt] at 5.5 -.1
\put {$R$} [ct] at 5.1 -.1
\put {$0$}   [ct] at 0 -.1
\put {$-r$}  [ct] at -1.5 -.19
\put {$r$}  [ct] at 1.5 -.26
\put {$r+\eps$}  [ct] at 2.05 -.21
\put {$-R$}  [ct] at -5 -.1
\put {$f(x)$} [rt] at -5.1 5

\put {$\eta$} [rt] at -5.25 0.6
\put {$A$} [rt] at -5.25 4.05

\setdots <0pt>
\linethickness=3pt
\putrule from -2 0 to -1 0
\putrule from 2 0 to 1 0

\linethickness=1pt
\putrule from -2 .15 to -2 -.15 
\putrule from -1 .15 to  -1 -.15 
\putrule from 2 .15 to 2 -.15 
\putrule from 1 .15 to  1 -.15 
\putrule from 5 .15 to  5 -.15 
\putrule from -5.15 0.5 to -4.85 0.5 
\putrule from -5.15 3.9 to -4.85 3.9

\endpicture
\]
\caption{Illustration of Corollary \ref{cor 1}(ii)}
\label{fig cor 1b} 
\end{figure}

When $\Omega$ is a ball and $f$ is constant or radial nonincreasing, it is well known that touchdown can occur
only at the origin (see \cite{DL89}, \cite{G1}, \cite{G08}).
In particular, this is the case if we take $f(0)=f(L)$ instead of $f(0)$ small in Corollary~\ref{cor 1}(i).
A natural question is then, whether the assumption ``$f(0)$ small enough'' in Corollary~\ref{cor 1}(i)
 could be replaced by $f(0)<f(L)$. 
The following theorem, which shows the stability of single point touchdown under suitable perturbation of $f$,
answers this question negatively.
In the sequel we denote $\|\cdot\|_q=\|\cdot\|_{L^q(\Omega)}$ and
\begin{equation}\label{DefMu0}
\mu_0(p,n):= \dfrac{p^p}{(p+1)^{p+1}}\lambda_1,
\end{equation}
where $\lambda_1$ is the first eigenvalue of $-\Delta$ in $H_0^1(B_1)$ and $B_1$ is the unit ball in $\R^n$.

\goodbreak
\eject

\begin{theorem}[Stability of single point touchdown under perturbation]\label{single point quenching}
Let $p>0$, $\Omega=B_R\subset\R^n$, $1\le q \le \infty$ with $q>\frac{n}{2}$, $M>0$, $\rho\in (0,R)$.
Let $f\in E \cap\, C^1(\overline B_\rho)$ be radially symmetric nonincreasing, with $f(r)>\mu_0(p,n)\rho^{-2}$ on $\overline{B}_\rho$.
There exists $\eps>0$ such that, if $g\in E\cap C^1(\overline B_\rho)$ is radially symmetric and satisfies 
\begin{eqnarray}
&&\|g\|_\infty\le M, \label{gmaxM} \\
&& -M\le g'(r) \leq \eps r, \quad \text{for all $r\in[0,\rho]$}, \label{gprimerho} \\
&&\hbox{$\|g-f\|_q \leq \varepsilon$}, \label{gLq}
\end{eqnarray}
then $T_g<\infty$ and $\mathcal{T}_g=\{0\}$.
\end{theorem}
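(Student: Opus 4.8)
The plan is to establish two things: first, that $T_g<\infty$ and that $g$ quenches only at the origin (qualitative statements), and second, that these remain true after the perturbation, which is where the quantitative hypotheses \eqref{gmaxM}--\eqref{gLq} enter. For the first part, I would rely on the fact that $\Omega=B_R$ and $g$ is radially symmetric with $g'(r)\le \eps r$, which means $g$ is ``nearly nonincreasing'': one can write $g(r)=\tilde g(r)+\frac{\eps}{2}r^2$ with $\tilde g$ nonincreasing, or simply invoke a comparison/moving-plane type argument adapted to this near-monotone setting to show that the solution $u_g$ is radially symmetric and (approximately) radially nonincreasing, so that $\max u_g(t,\cdot)=u_g(t,0)$ and the only possible touchdown point is $x=0$. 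The classical results of \cite{DL89}, \cite{G1}, \cite{G08} handle the exactly nonincreasing case; the small correction $\frac\eps2 r^2$ should be absorbable because $u_g$ stays uniformly away from $1$ on any compact subset of $(0,\rho)$ up to time $T_g$ once we know $T_g<\infty$ and touchdown is at the origin, and the sign condition $g'(r)\le\eps r$ is exactly what controls $\partial_r u_g$ via differentiation of the equation and the maximum principle.

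To prove $T_g<\infty$, I would use the standard eigenfunction argument: multiply the equation by the first Dirichlet eigenfunction $\varphi_\rho$ of $-\Delta$ on $B_\rho$ (normalized, positive), integrate over $B_\rho$, and use Jensen's inequality on the convex function $(1-u)^{-p}$ together with the lower bound on $g$ on $\overline B_\rho$. The quantity $\mu_0(p,n)\rho^{-2}$ in the hypothesis is precisely the critical threshold: $\lambda_1(B_\rho)=\lambda_1\rho^{-2}$, and the function $s\mapsto s(1-s)^{-p}$ attains its relevant minimal slope behavior so that $\inf_{s\in(0,1)}\bigl[(1-s)^{-p}-\lambda_1\rho^{-2}s/\|g\|_{L^1\text{-weight}}\bigr]$ becomes negative exactly when the weighted average of $g$ exceeds $\mu_0(p,n)\rho^{-2}=\frac{p^p}{(p+1)^{p+1}}\lambda_1\rho^{-2}$. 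Since $f>\mu_0(p,n)\rho^{-2}$ on $\overline B_\rho$ with strict inequality and $\overline B_\rho$ is compact, there is room to spare, and the $L^q$-closeness \eqref{gLq} with $q>n/2\ge$ (enough to control the relevant weighted $L^1$ average of $g$ on $B_\rho$, since $\varphi_\rho\in L^{q'}$) guarantees $\int_{B_\rho} g\,\varphi_\rho\,dx$ stays above the threshold for $\eps$ small; this forces $\int_{B_\rho} u_g(t)\varphi_\rho\,dx$ to reach a value forcing $\max u_g\to1$ in finite time, with a finite-time bound depending only on $f,\rho,p,n$ — and here is where $M$ (the a priori bound $\|g\|_\infty\le M$) is used to make all constants uniform.

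Once we know $T_g<\infty$ with a uniform bound $T_g\le M'$ and that the touchdown set of $g$ is contained in $\{0\}$ by the near-monotonicity argument, it only remains to exclude the degenerate possibility $\mathcal T_g=\emptyset$, i.e. to rule out that $u_g$ reaches $1$ without a genuine touchdown point — but quenching in finite time on a bounded domain always produces a nonempty quenching set, so $\mathcal T_g=\{0\}$ follows. The main obstacle, I expect, is the first step: transferring the single-point quenching conclusion from the exactly radially nonincreasing profile $f$ to the merely ``$g'\le\eps r$'' profile $g$. The cleanest route is probably to prove a gradient estimate $\partial_r u_g(t,r)\le 0$ fails only by a controlled amount — more precisely, to show $w:=\partial_r u_g$ satisfies a parabolic inequality forcing $w(t,r)\le C\eps r$ on $[0,\rho]$ up to $T_g$, combined with the fact that outside a fixed neighborhood of $0$ the solution stays bounded away from $1$ (which itself uses Theorem~\ref{local result dim n} or its proof, applied with the smallness of $g$ away from the origin guaranteed by \eqref{gLq} and continuity). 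Putting these together localizes quenching at $0$; the $C^1$-regularity assumption on $g$ near $\overline B_\rho$ and the two-sided bound $-M\le g'(r)\le\eps r$ are exactly what make the gradient estimate and the eigenfunction argument go through simultaneously.
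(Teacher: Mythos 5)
Your overall skeleton (finite-time quenching via the eigenfunction/Jensen argument on $B_\rho$, then a gradient estimate to localize touchdown at the origin) is the right one, and your instinct that a pointwise lower bound on $g$ over $B_\rho$ must be recovered from \eqref{gprimerho} and \eqref{gLq} is correct: the paper does this by writing $g(r)\ge g(r+s)-\eps R$, averaging over $s\in[s_0,2s_0]$ and applying H\"older to \eqref{gLq}. Note that Jensen's inequality in Lemma~\ref{upper estimate for T} really needs the pointwise bound $g\ge\mu$ on the ball; controlling only the weighted average $\int g\varphi_\rho$, as you suggest, does not let you pull $\mu$ out in front of $(1-y)^{-p}$, since $g$ and $(1-u)^{-p}$ are correlated.

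The genuine gaps are in the localization step. First, an estimate of the form $\partial_r U_g\le C\eps r$ (``$U_g$ is nearly nonincreasing'') does not localize touchdown at the origin: it is perfectly compatible with $U_g\to1$ on a whole interval. What is needed is the Friedman--McLeod functional $J=r^{n-1}\partial_rU_g+\eta r^n(1-U_g)^{-\gamma}$ with $0\le\gamma<p$; proving $J\le0$ yields $\partial_rU_g\le-\eta r(1-U_g)^{-\gamma}$, which integrates to $1-U_g\ge c\,r^{2/(\gamma+1)}$ and genuinely excludes touchdown for $r>0$. The hypothesis $g'(r)\le\eps r$ enters exactly in the parabolic inequality for $J$, where the term $g'r^{n-1}(1-u)^{-p}$ must be absorbed by $-\gamma\eta^2r^n(1-u)^{-p}$, forcing the constraint $\eps\le\gamma\eta^2$; your proposed linear decomposition $g=\tilde g+\tfrac\eps2 r^2$ does not interact with the nonlinear problem in any usable way. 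Second, your mechanism for keeping $U_g$ away from $1$ outside a neighborhood of the origin --- Theorem~\ref{local result dim n} ``applied with the smallness of $g$ away from the origin'' --- is not available: $g$ is only close to $f$ in $L^q$, and $f$ is bounded \emph{below} by $\mu_0(p,n)\rho^{-2}$ on all of $\overline B_\rho$ (and unconstrained outside), so there is no smallness away from the origin. The paper instead invokes the upper semicontinuity of the touchdown set (Theorem~\ref{semi-continuity quenching set}, made applicable by the uniform type~I estimate of Proposition~\ref{type I qualitative} and Remark~\ref{uniform type 1}) together with the known fact $\mathcal{T}_f=\{0\}$ for radially nonincreasing $f$, to get $U_g\le1-\kappa$ on $\{\rho/4\le|x|\le R\}$ uniformly in $g$. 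That bound, plus the $C^2$ convergence $U_g(t_1,\cdot)\to U_f(t_1,\cdot)$ and a Hopf-lemma estimate $\partial_rU_f\le-c_0r$, is what supplies the initial and lateral boundary conditions needed to run the maximum principle for $J$ on $[t_1,T_g)\times[0,\rho/2]$. Without these ingredients your argument does not close.
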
 

\goodbreak

As a direct consequence of Theorem \ref{single point quenching},
there exist genuine ``M''-shaped profiles $g$ (i.e., such that $g(0)<g(L)$, with $g(0)$ close to $g(L)$), for which touchdown occurs at the single point $x=0$
(see figure~\ref{fig th 1}).
This shows that some kind of smallness condition, such as \eqref{hyp local result dim n} or \eqref{hyp global result dim n}, is required in order to rule out touchdown in a given region of the domain.

In turn, this provides examples of profiles $f$ for which the touchdown 
set is located {\it far away} from the maximum points of $f$.
It also shows that the radial nonincreasing monotonicity of $f$ is sufficient but {\it not necessary}
for single point touchdown at the origin.
This confirms some of the numerical predictions from \cite{EGG} (see \cite[Remark 7.4.2]{EGG}).
Such a behavior must be interpreted as an effect of the diffusion (and of the boundary conditions),
since in the absence of diffusion the explicit computation immediately shows that touchdown 
occurs only at the maximum points of $f$.

\begin{figure}[h]
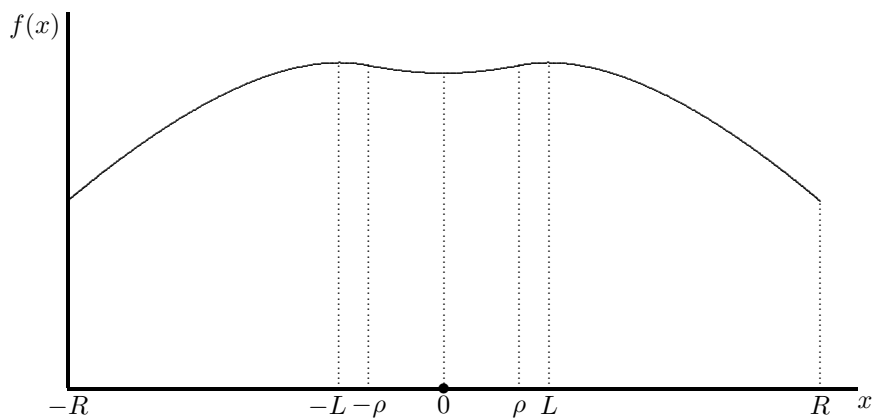

\[
\beginpicture
\setcoordinatesystem units <1cm,1cm>
\setplotarea x from -6 to 6, y from -1 to 5

\setdots <0pt>
\linethickness=1pt
\putrule from -5 0 to 5.5 0
\putrule from -5 0 to -5 5

\setquadratic

\plot -5 2.5  -2.75 4  -1 4.3 /
\plot 5 2.5  2.75 4  1 4.3 /
\plot -1 4.3  0 4.2  1 4.3 /

\setdots <2pt>
\setlinear
\plot 1 0  1 4.3 /
\plot -1 0  -1 4.3 /
\plot 0 0  0 4.2 /
\plot -1.4 0  -1.4 4.3 /
\plot 1.4 0  1.4 4.3 /
\plot 5 0  5 2.5 /

\put {$x$} [lt] at 5.5 -.1
\put {$R$} [ct] at 5 -.1
\put {$\rho$}   [ct] at 1 -.15
\put {$L$}   [ct] at 1.4 -.1
\put {$0$}   [ct] at 0 -.1
\put {$-\rho$}  [ct] at -1 -.1
\put {$-L$}   [ct] at -1.55 -.1
\put {$-R$}  [ct] at -5 -.1
\put {$f(x)$} [rt] at -5.1 5

\put {$\bullet$} [cc] at 0 0

\endpicture
\]
\caption{An illustration of Theorem \ref{single point quenching} in one space dimension for an ``M''-shaped profile}
\label{fig th 1}
\end{figure}

Another, rather surprising, consequence of Theorem \ref{single point quenching}, is the possibility of constructing 
{\it strictly convex} profiles
producing single point touchdown at the unique {\it minimum} point of $f$. 
Indeed, let $f_\lambda(x)$ be the function defined in $\Omega=B(0,R)$ by
$$
f_\lambda(x) := \mu + \lambda \dfrac{|x|^2}{R^2}, \qquad \text{with } \mu > \mu_0(p,n)\rho^{-2} \text{ and } \lambda \ge 0. 
$$
We see that $f_0$ is radially nonincreasing and, for $\lambda>0$ small enough, $f_\lambda$ satisfies the hypothesis of the Theorem.
Therefore, the only touchdown point is the origin, i.e. the unique minimum point of $f_\lambda$ (see figure \ref{fig th 2}).
This solves negatively the open question in \cite[Section 7.5]{EGG},
on whether the touchdown set must consist of an $(n-1)$ dimensional sphere when $f(x) = f(|x|)$ is increasing in $|x|$.
This example also shows that the monotonicity or decay hypotheses on $f$ near the boundary are not necessary in general
for the compactness of $\mathcal{T}_f$.

\bigskip

\begin{figure}[h]
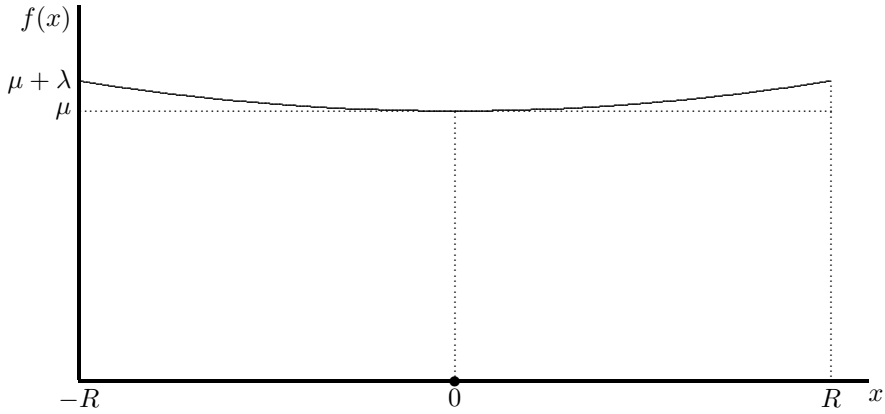

\[
\beginpicture
\setcoordinatesystem units <1cm,1cm>
\setplotarea x from -6 to 6, y from -1 to 5

\setdots <0pt>
\linethickness=1pt
\putrule from -5 0 to 5.5 0
\putrule from -5 0 to -5 5

\setquadratic

\plot -5 4  0 3.6  5 4 /

\setdots <2pt>
\setlinear
\plot 0 0  0 3.6 /
\plot 5 0  5 4 /

\plot 5 3.6  -5  3.6 /

\put {$x$} [lt] at 5.5 -.1
\put {$R$} [ct] at 5 -.1
\put {$0$}   [ct] at 0 -.1
\put {$-R$}  [ct] at -5 -.1
\put {$f(x)$} [rt] at -5.1 5
\put {$\mu $} [rc] at -5.1 3.6
\put {$\mu + \lambda $} [rc] at -5.1 4

\put {$\bullet$} [cc] at 0 0

\endpicture
\]
\caption{An illustration of Theorem \ref{single point quenching} for a strictly convex profile}
\label{fig th 2}
\end{figure}

\bigskip

In Corollary \ref{cor 1}(ii) we saw that the touchdown set can be concentrated 
near any $(n-1)$-dimensional sphere, where $f$ achieves its maxima.
As a consequence of the following result, which shows the stability of unfocused touchdown concentrated near the origin,
we obtain profiles $g$ whose touchdown set
contains an $(n-1)$-dimensional sphere
and is arbitrarily concentrated near the origin, {\it far away} from the maxima of $g$.
Such $g$ can take the form of an ``M''-shaped profile with 
a narrow ``well'' near the origin (see fig.~\ref{fig th 2 b}). 

\begin{theorem}[Stability of unfocused touchdown concentrated near the origin]\label{theorem one well}
Let $p>0$, $\Omega=B_R\subset\R^n$, $1\le q \le \infty$ with $q>\frac{n}{2}$, $0<\eta<R$.
Let $M, \rho>0$, $B:=B(x_0,\rho)\subset\Omega$ and $\mu>\mu_0(n,p)\rho^{-2}$.
Let $f\in E$ be radially symmetric nonincreasing.
There exists $\eps>0$ such that, if $g\in E$ is radially symmetric and satisfies 
\begin{eqnarray*}
&&\mu \chi_B\le g\le M, \\
\noalign{\vskip 1mm}
&&\hbox{$g(0)<\eps$,} \\
\noalign{\vskip 1mm}
&&\hbox{$\|g-f\|_q \leq \varepsilon$},
\end{eqnarray*}
then $T_g<\infty$ and $\mathcal{T}_g\subset B_\eta\setminus\{0\}$.
In particular $\mathcal{T}_g$ contains at least an $(n-1)$-dimensional sphere.
\end{theorem}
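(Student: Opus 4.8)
The plan is to combine a lower-bound argument, guaranteeing that quenching occurs \emph{somewhere} in $\overline{B}_\eta$, with an application of Theorem~\ref{local result dim n} to exclude the origin. The structure mirrors that of Theorem~\ref{single point quenching}, the difference being that here the limiting profile $f$ only produces touchdown at the origin, whereas we want to \emph{keep} touchdown near the origin while \emph{removing} the origin itself. First I would use the subsolution comparison with the problem on the ball $B=B(x_0,\rho)$ where $g\ge\mu\chi_B$: since $\mu>\mu_0(n,p)\rho^{-2}$, the solution of the MEMS problem with constant permittivity $\mu$ on $B$ and zero Dirichlet data quenches in finite time (this is the classical largeness criterion, $\mu_0$ being exactly the sharp constant built from $\lambda_1$ and the optimization of $(1-u)^{-p}$), hence by the comparison principle $T_g\le T_{\mu,B}<\infty$, and moreover one gets a uniform-in-$g$ bound $T_g\le M_0$ for some $M_0=M_0(p,n,\rho,\mu)$. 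This furnishes the constant $M$ needed to invoke \eqref{condMB}, with the ball $B$ of radius $r:=\rho$ (shrinking $\rho$ if necessary so that $r\le\rho$ and $B\subset\Omega$) and $r':=\mu$ as the lower bound; so Theorem~\ref{local result dim n} applies with a threshold $\gamma_0=\gamma_0(p,n,R,M_0,\rho,\mu)$ independent of $g$ in the admissible class.

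Next I would choose $\eps$. Since the hypothesis forces $g(0)<\eps$ and $\delta(0)=\mathrm{dist}(0,\partial B_R)=R$, the condition \eqref{hyp local result dim n} at $x_0=0$ reads $g(0)<\gamma_0 R^{p+1}$, which holds as soon as $\eps\le\gamma_0 R^{p+1}$; by Theorem~\ref{local result dim n}, $0\notin\mathcal{T}_g$. It remains to show $\mathcal{T}_g\subset B_\eta$. Here is where the $L^q$-closeness $\|g-f\|_q\le\eps$ and the radial-nonincreasing structure of $f$ enter. I would argue that the quenching set of $g$ is confined near where $g$ (equivalently, where the quenching profile) is largest: for the limiting $f$, radial monotonicity gives $\mathcal{T}_f=\{0\}$ (by the classical moving-plane / Friedman–McLeod type argument, e.g.\ \cite{DL89,G1,G08}), and one wants a \emph{stability} statement: a small $L^q$ perturbation of $f$ cannot push touchdown outside a fixed neighborhood of $0$. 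Concretely, I would revisit the mechanism already used to prove Theorems~\ref{local result dim n}–\ref{global result dim n}: the key tool there is an auxiliary function argument (a weighted estimate of the form $w:=u_t-\varphi(1-u)^{-p-1}\ge 0$, or an energy/eigenfunction comparison) that, combined with $L^q\to L^\infty$ smoothing for the heat semigroup (this is why $q>n/2$ is imposed), controls $\sup u$ on $\overline\Omega\setminus B_\eta$ away from $1$, uniformly up to $t=T_g$, provided the relevant integral quantity $\int_{\overline\Omega\setminus B_{\eta/2}} g$ (or a localized version of it) is below a threshold. Since $\|g-f\|_q$ small and $f$ radial nonincreasing give $\sup_{\overline\Omega\setminus B_{\eta/2}} g\le f(\eta/2)+C\eps\le$ (something controllable), and since for a radial nonincreasing $f$ the outer region already contributes little, a suitably small $\eps$ forces this threshold condition, hence $\mathcal{T}_g\cap(\overline\Omega\setminus B_\eta)=\emptyset$.

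Combining the two inclusions, $\mathcal{T}_g\subset B_\eta\setminus\{0\}$; since $\mathcal{T}_g$ is nonempty (as $T_g<\infty$) and closed, and since it is contained in the punctured ball, $g$ being radial makes $\mathcal{T}_g$ radially symmetric, so it must contain a full sphere $\{|x|=\rho_*\}$ with $0<\rho_*<\eta$, giving the last assertion.

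\textbf{Main obstacle.} The delicate point is the uniform confinement $\mathcal{T}_g\subset B_\eta$ under only an $L^q$ perturbation: one needs the argument underlying Theorems~\ref{local result dim n}–\ref{global result dim n} to be robust enough that the constants ($\gamma_0$, the smoothing constants, the finite-time bound $M_0$) can be frozen over the whole admissible family of $g$'s, and one must be careful that the auxiliary-function / eigenfunction comparison still works when $g$ is merely Hölder and close to $f$ in $L^q$ rather than uniformly. I expect this to require first passing through an intermediate $t_1>0$ at which $\|u(g)(t_1)-u(f)(t_1)\|_\infty$ is small (using parabolic smoothing from the $L^q$ bound), and then running the localized no-touchdown estimate on $[t_1,T_g)$ with $u(g)(t_1)$ as a new, bounded-away-from-$1$ initial datum on $\overline\Omega\setminus B_\eta$. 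Making that two-stage scheme fit the hypotheses of the earlier theorems — in particular producing the right $M$ and $r$ in \eqref{condMB} independently of $g$ — is the crux.
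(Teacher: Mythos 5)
Your overall architecture is right in two of its three pieces: the eigenfunction comparison on $B(x_0,\rho)$ giving a uniform bound $T_g\le M_0<\infty$ is exactly Lemma~\ref{upper estimate for T}, and excluding the origin via Theorem~\ref{local result dim n} with the choice $\eps\le\gamma_0 R^{p+1}$ is exactly what the paper does (after checking that \eqref{condMB} holds uniformly over the admissible $g$, which your bookkeeping handles). The final step (radial symmetry of $\mathcal{T}_g$, nonemptiness from $T_g<\infty$, hence a full sphere in the punctured ball) is also fine.

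The genuine gap is in the confinement $\mathcal{T}_g\subset B_\eta$. You correctly sense that a stability statement is needed, but the mechanism you then propose — controlling $\sup u$ on $\overline\Omega\setminus B_\eta$ via a smallness threshold on $\sup_{\overline\Omega\setminus B_{\eta/2}}g$ or on $\int_{\overline\Omega\setminus B_{\eta/2}}g$, ``since for a radial nonincreasing $f$ the outer region already contributes little'' — does not work, because no such smallness is available. The hypotheses allow $f$ to be a large constant, in which case $f(\eta/2)$ is as large as $f(0)$ and the criterion of Theorem~\ref{global result dim n} (which needs $\sup_{\overline\Omega\setminus\omega}g<\gamma_0\,{\rm dist}^{p+1}(\omega,\partial\Omega)$) is simply false for every admissible $g$; yet the theorem still holds, because $\mathcal{T}_f=\{0\}$ comes from the \emph{monotonicity} of $f$ (Friedman--McLeod), not from smallness. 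The paper's route is to invoke Theorem~\ref{semi-continuity quenching set} (upper semicontinuity of the touchdown set): since $\mathcal{T}_f=\{0\}\subset\subset\Omega$, and since $\mu\chi_B\le g\le M$ puts $g$ in $E_\gamma$ with a uniform type~I constant (Remark~\ref{uniform type 1}), one gets $\mathcal{T}_g\subset\mathcal{T}_f+B(0,\eta)=B_\eta$ for $\|g-f\|_q$ small. The key ingredient you are missing is the content of Step~1 of that theorem's proof: the uniform type~I estimate yields $|\partial_t U_g-\Delta U_g|\le C(T_g-t)^{-p/(p+1)}$ away from $\partial\Omega$, whose time-integrability gives a uniform H\"older-in-time bound $U_g(T_g,\cdot)\le U_g(t,\cdot)+C(T_g-t)^\beta$ on $\Omega_\eta$; this transfers the bound $U_f<1-5\kappa$ outside $B_\eta$ (valid up to $T_f$ because $\mathcal{T}_f=\{0\}$) to $U_g$ up to $T_g$, with a separate supersolution argument near $\partial\Omega$. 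None of this requires $g$ to be small away from the origin. Your ``two-stage scheme'' in the final paragraph is pointing in the right direction (pass through an intermediate time $t_1$ via $L^q\to L^\infty$ continuity), but the second stage must be the H\"older-in-time/type~I argument just described, not the localized supersolution of Lemma~\ref{basic supersol}, whose hypothesis \eqref{hypf basic supersol} you cannot verify here.
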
 

\bigskip

\begin{figure}[h]
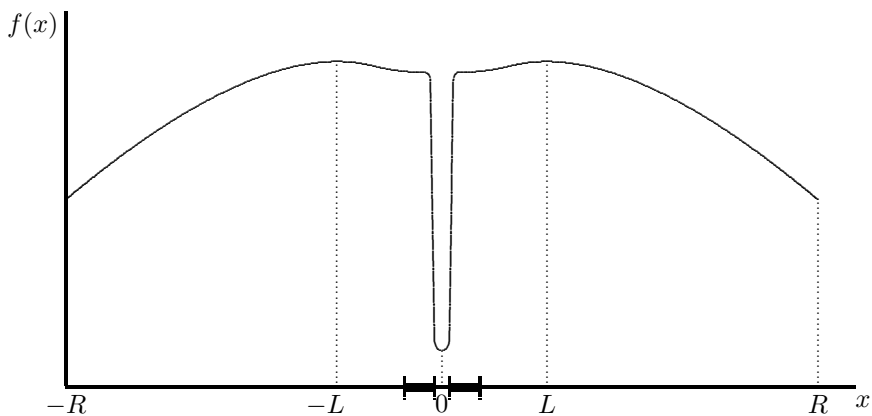

\[
\beginpicture
\setcoordinatesystem units <1cm,1cm>
\setplotarea x from -6 to 6, y from -1 to 5

\setdots <0pt>
\linethickness=1pt
\putrule from -5 0 to 5.5 0
\putrule from -5 0 to -5 5

\setlinear

\plot -0.15 4  -0.1 0.7 /
\plot 0.15 4  0.1 0.7 /

\setquadratic
\plot -0.25 4.2  -0.17 4.15  -0.15 4 /
\plot -0.1 0.7  -0.08 0.55  0 0.5 / 
\plot 0.25 4.2  0.17 4.15  0.15 4 /
\plot 0.1 0.7  0.08 0.55  0 0.5 / 

\plot 5 2.5  2.75 4  1 4.3 /
\plot 1 4.3  0.6 4.22  0.25 4.2 /
\plot -5 2.5  -2.75 4  -1 4.3 /
\plot -1 4.3  -0.6 4.22  -0.25 4.2 /

\setdots <2pt>
\setlinear
\plot 0 0  0 0.5 /
\plot -1.4 0  -1.4 4.3 /
\plot 1.4 0  1.4 4.3 /
\plot 5 0  5 2.5 /

\put {$x$} [lt] at 5.5 -.1
\put {$R$} [ct] at 5 -.1
\put {$L$}   [ct] at 1.4 -.1
\put {$0$}   [ct] at 0 -.1
\put {$-L$}   [ct] at -1.55 -.1
\put {$-R$}  [ct] at -5 -.1
\put {$f(x)$} [rt] at -5.1 5

\setdots <0pt>
\linethickness=3pt
\putrule from -0.5 0 to -0.1 0
\putrule from 0.1 0 to 0.5 0 

\linethickness=1pt
\putrule from -0.5 .15 to -0.5 -.15 
\putrule from 0.1 .15 to 0.1 -.15 
\putrule from 0.5 .15 to  0.5 -.15 
\putrule from -0.1 .15 to  -0.1 -.15 

\endpicture
\]
\caption{Illustration of Theorem \ref{theorem one well} with touchdown far away from the maxima of $f$.}
\label{fig th 2 b}
\end{figure}

In \cite{GG07},  for ``M''-shaped profiles in dimension one,
situations similar to Corollary \ref{cor 1} and Theorem~\ref{single point quenching} 
(cf.~figures~\ref{fig cor 1}--\ref{fig th 2})
were observed numerically, with respectively two and a single touchdown point.
In the case of Corollary \ref{cor 1} we here do not know whether there are two points or more. 
On the other hand, for some other ``M''-shaped profiles
(roughly, intermediate between figure \ref{fig cor 1} and \ref{fig th 1}), touchdown on a whole interval containing $0$ was observed numerically, which we are presently unable to confirm analytically.
These seem to be difficult questions. In this connection, we stress that results asserting the finiteness of the singular set for one-dimensional or radial problems (see \cite{CM89}),
based on reflection techniques, are essentially restricted to the case of constant or monotone coefficients.
Also, it was shown in \cite{Ve93} that for the nonlinear heat equation in $\R^n$ with constant coefficients, 
the blowup set has Hausdorff dimension at most $n-1$,
but the methods in \cite{Ve93} do not seem to apply to the present situation.

\medskip
\eject

Our last example shows that more complicated behaviors can occur. Namely the touchdown set 
 can be concentrated near two arbitrarily given points.
In the case when $\Omega$ is a ball, we can construct radially symmetric profiles for which 
 the touchdown set is concentrated near two arbitrarily given $(n-1)$-dimensional spheres.

\begin{theorem}\label{two comp. quench set}
Let $p>0$. Let $\Omega\subset \mathbb{R}^n$ a smooth bounded domain. 

(i) {\rm (Touchdown set concentrated near two arbitrary points.)}
For any $x_1,x_2\in\Omega$ and any $\rho>0$,
there exist positive profiles $f\in E$ such that
$$\mathcal{T}_f \subset B(x_1,\rho)\cup B(x_2,\rho), \qquad \mathcal{T}_f \cap B(x_1,\rho)\neq \emptyset,
\qquad \mathcal{T}_f \cap B(x_2,\rho)\neq \emptyset.$$

(ii) {\rm (Touchdown set concentrated near two arbitrary spheres.)}
Let $\Omega = B_R\subset \mathbb{R}^n$, $0<r_1<r_2<R$, $\rho>0$ and set 
$A_i=\{x\in \mathbb{R}^n;\; |x|\in(r_i-\rho,r_i+\rho)\}$.
There exist positive, radially symmetric profiles $f\in E$ such that 
$$\mathcal{T}_f \subset A_1\cup A_2, \qquad \mathcal{T}_f \cap A_1\neq \emptyset,
\qquad \mathcal{T}_f \cap A_2\neq \emptyset.$$
\end{theorem}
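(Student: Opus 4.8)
The strategy is to combine the localization results (Theorems~\ref{local result dim n} and \ref{global result dim n}) with a stability argument for the touchdown time and set under small $L^q$-perturbations of the profile --- the same mechanism already used to deduce Corollary~\ref{cor 1}(ii), Theorem~\ref{single point quenching} and Theorem~\ref{theorem one well}. The idea is to start from a ``decoupled'' reference situation where touchdown happens near $x_1$ (resp.\ $A_1$), add a small but spatially concentrated bump near $x_2$ (resp.\ $A_2$), and exploit the fact that if the bump is tall enough it \emph{forces} quenching somewhere near its support, while if the global profile remains small outside two small neighborhoods it \emph{confines} the touchdown set to those neighborhoods. For part~(i): fix disjoint balls $B(x_i,\rho)\subset\subset\Omega$ (shrinking $\rho$ if needed). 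Choose a profile $f_1$ supported (essentially) in $B(x_1,\rho/2)$, large enough there that $T_{f_1}<\infty$ and, by Theorem~\ref{global result dim n} applied with $\omega=B(x_1,\rho)$, $\mathcal{T}_{f_1}\subset B(x_1,\rho)$. Then set $f = f_1 + a\chi_{B(x_2,\rho/2)}$ for a suitable constant $a>0$. Two competing estimates must be arranged simultaneously: (a) $a$ and the bump radius $\rho/2$ are chosen so that $a\chi_{B(x_2,\rho/2)} \ge r'\chi_{B'}$ for a ball $B'$ of radius $r'$ with the corresponding ``forced quenching'' threshold, guaranteeing $\mathcal{T}_f\cap B(x_2,\rho)\neq\emptyset$ once we know $T_f<\infty$; (b) simultaneously $f$ stays below $\gamma_0\,\mathrm{dist}^{p+1}(\omega,\partial\Omega)$ outside $\omega := B(x_1,\rho)\cup B(x_2,\rho)$, so Theorem~\ref{global result dim n} (in the version allowing $\omega$ a finite union of balls, or applied componentwise) gives $\mathcal{T}_f\subset\omega$. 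Finally, $\mathcal{T}_f\cap B(x_1,\rho)\neq\emptyset$ follows because $f\ge f_1\ge r''\chi_{B(x_1,\rho/4)}$, which again forces quenching inside $B(x_1,\rho)$; alternatively one runs the stability argument: $\|f - f_1\|_q = a\,|B(x_2,\rho/2)|^{1/q}$ can be made $\le\varepsilon$ (the stability radius for $f_1$) by first choosing $\rho$ small, then $a$, so $\mathcal{T}_f$ stays close to $\mathcal{T}_{f_1}\subset B(x_1,\rho)$.

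The delicate point is the joint choice of constants. The bump height $a$ needed to force quenching near $x_2$ depends on $\rho$ (a ball of radius $\sim\rho$ requires, by the known quenching criteria, a permittivity bounded below by something like $c\,\rho^{-2}$), while the confinement estimate requires $\sup_{\overline\Omega\setminus\omega}f < \gamma_0\,\mathrm{dist}^{p+1}(\omega,\partial\Omega)$, with $\gamma_0$ depending on $M$ --- and $M$ must bound both $T_f$ and $\|f\|_\infty$. So the argument must be run in the order: fix $x_1,x_2,\rho$; pick $f_1$ and record $M_1\ge\max(T_{f_1},\|f_1\|_\infty)$; determine the forced-quenching bump height $a=a(\rho,p,n,\Omega)$; note that $T_f\le T_{f_1}$ (monotonicity of $T$ in $f$, since $f\ge f_1$) and $\|f\|_\infty\le\|f_1\|_\infty+a=:M$; \emph{then} apply Theorem~\ref{global result dim n} with this $M$ to get $\gamma_0=\gamma_0(p,\Omega,M,r)$; and finally check that outside $\omega$ the profile $f$ equals the small tail of $f_1$ (plus nothing from the bump, since the bump is supported in $B(x_2,\rho/2)\subset\omega$), which we may take $\le\gamma_0\,\mathrm{dist}^{p+1}(\omega,\partial\Omega)$ by choosing $f_1$'s support tight and its tail tiny from the start. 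The self-consistency loop --- $M$ depends on $a$, and one needs the final smallness inequality to hold for \emph{that} $\gamma_0(M)$ --- is the main obstacle, but it closes because $a$ and $M$ are bounded independently of how small we make the tail of $f_1$.

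For part~(ii) the scheme is identical but carried out among radially symmetric profiles. Replace balls by annular ``shells'' $A_i$ and the localization set by $\omega = A_1\cup A_2$ (note $\mathrm{dist}(\omega,\partial\Omega)=R-(r_2+\rho)>0$). The reference profile $f_1$ is radial, concentrated in the shell $\{|x|\in(r_1-\rho/2,r_1+\rho/2)\}$, chosen large there so that $T_{f_1}<\infty$; the fact that touchdown of $f_1$ lands in $A_1$ and away from $0$ comes from Theorems~\ref{local result dim n}--\ref{global result dim n} (no touchdown at $0$ because $f_1(0)$ is tiny, no touchdown outside $A_1$ by the global bound). Then add a radial bump $a\chi_{\{|x|\in(r_2-\rho/2,r_2+\rho/2)\}}$; such a shell contains a ball of radius $\sim\rho$, so for $a$ large enough the forced-quenching criterion \eqref{condMB} applies and yields $\mathcal{T}_f\cap A_2\neq\emptyset$. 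The confinement $\mathcal{T}_f\subset A_1\cup A_2$ follows from Theorem~\ref{global result dim n} with $\omega=A_1\cup A_2$ provided $f$ is small on $\overline{B_R}\setminus(A_1\cup A_2)$, which holds by construction (small tails plus bumps living inside the shells). That $\mathcal{T}_f\cap A_1\neq\emptyset$ is again obtained either by monotone comparison with $f_1$ together with the $L^q$-stability of $\mathcal{T}_{f_1}$, or directly by a forced-quenching lower bound $f\ge f_1\ge r'\chi_{B'}$ with $B'$ a ball of radius $r'$ sitting inside the shell $\{|x|\in(r_1-\rho/4,r_1+\rho/4)\}$. All constant choices are made in the same order as above; no new difficulty beyond keeping track of the radial geometry arises.
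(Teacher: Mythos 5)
Your construction has a genuine gap at its core: you assert that making the bump near $x_2$ tall enough (so that $f\ge\mu\chi_{B'}$ with $\mu>\mu_0(p,n)(r')^{-2}$) ``forces quenching somewhere near its support'', and likewise that $f\ge f_1\ge r''\chi_{B(x_1,\rho/4)}$ forces touchdown inside $B(x_1,\rho)$. No result in the paper, and no standard comparison argument, yields this. The eigenfunction criterion of Lemma~\ref{upper estimate for T} only gives $T_f<\infty$; it says nothing about \emph{where} touchdown occurs, and the solution may perfectly well quench only near the other bump. Your fallback for the $x_1$ side --- take the bump at $x_2$ so small in $L^q$ that the stability theorem applies --- makes things worse rather than better: Theorem~\ref{semi-continuity quenching set2} is an \emph{upper} semicontinuity statement, $\mathcal{T}_f\subset\mathcal{T}_{f_1}+B(0,\sigma)$, so it would confine the whole touchdown set near $x_1$ and preclude $\mathcal{T}_f\cap B(x_2,\rho)\neq\emptyset$. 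The lower semicontinuity you would actually need is false in general; this is precisely the content of Proposition~\ref{Tdiscontinuous}, which the paper derives from this very construction. So for a fixed, arbitrarily chosen pair of bump heights there is no reason that touchdown occurs near both bumps, and generically it will not. (The order-of-constants loop you flag as ``the main obstacle'' is real but secondary, and your handling of the confinement part via Theorems~\ref{local result dim n} and \ref{global result dim n} is essentially correct.)

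The paper closes the gap with an intermediate-value argument that your proposal is missing. One builds a continuous (in $L^q$) one-parameter family $f_h$, $h\in[\eta,2\mu]$, of two-bump profiles whose heights over the inner balls $\tilde B_1$, $\tilde B_2$ vary in opposite directions ($h$ and $2\mu+\eta-h$), with $f_h=\eta$ small outside $B_1\cup B_2$. Theorems~\ref{local result dim n} and \ref{global result dim n} give $\mathcal{T}_{f_h}\subset B_1\cup B_2$ for all $h$, together with $\mathcal{T}_{f_h}\subset B_2$ for $h$ near $\eta$ and $\mathcal{T}_{f_h}\subset B_1$ for $h$ near $2\mu$. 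Setting $h^*=\inf\{h:\ \mathcal{T}_{f_h}\cap B_1\neq\emptyset\}$ and applying the upper semicontinuity of Theorem~\ref{semi-continuity quenching set2} from both sides of $h^*$, the nonempty sets $\mathcal{T}_{f_h}\subset B_2$ for $h\uparrow h^*$ force $\mathcal{T}_{f_{h^*}}\cap B_2\neq\emptyset$, while a sequence $h_i\downarrow h^*$ with $\mathcal{T}_{f_{h_i}}\cap B_1\neq\emptyset$ forces $\mathcal{T}_{f_{h^*}}\cap B_1\neq\emptyset$ (the two balls being separated by more than the semicontinuity radius $\sigma$). The profile $f_{h^*}$ is the one whose existence the theorem asserts; without some such balancing mechanism the two nonemptiness claims cannot both be obtained.
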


\begin{figure}[h]
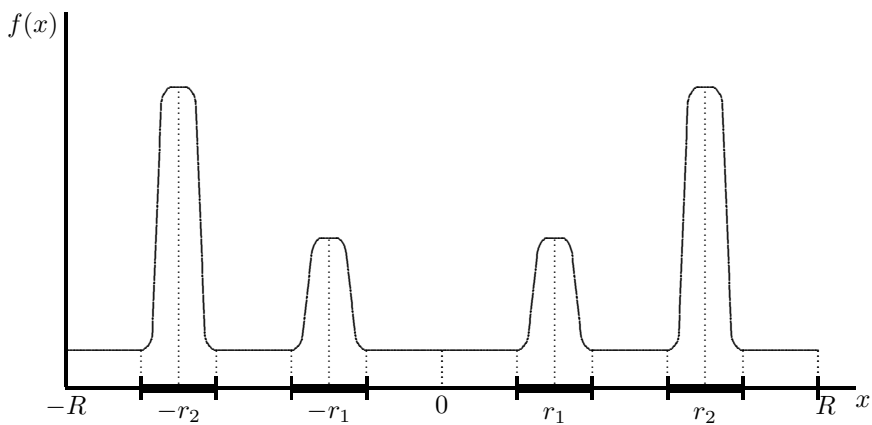

\[
\beginpicture
\setcoordinatesystem units <1cm,1cm>
\setplotarea x from -6 to 6, y from -1 to 5

\setdots <0pt>
\linethickness=1pt
\putrule from -5 0 to 5.5 0
\putrule from -5 0 to -5 5

\setlinear
\plot -1.27 1.8  -1.15 0.7 / 
\plot -1 0.5  1 0.5 / 
\plot -1.85 0.7  -1.73 1.8 / 
\plot -5 0.5  -4 0.5 /
\plot -3 0.5  -2 0.5 / 
\plot 1.27 1.8  1.15 0.7 / 
\plot 1 0.5  1 0.5 / 
\plot 1.85 0.7  1.73 1.8 / 
\plot 5 0.5  4 0.5 / 
\plot 3 0.5  2 0.5 /

\plot -3.27 3.8  -3.15 0.7 /
\plot -3.85 0.7  -3.73 3.8 /
\plot 3.27 3.8  3.15 0.7 /
\plot 3.85 0.7  3.73 3.8 /

\plot -1.4 2  -1.6 2 / 
\plot 1.4 2  1.6 2 / 
\plot -3.4 4  -3.6 4 / 
\plot 3.4 4  3.6 4 / 

\setquadratic
\plot -1.4 2  -1.32 1.94  -1.27 1.8 /
\plot -1.15 0.7  -1.1 0.57  -1 0.5 / 
\plot -2 0.5  -1.9 0.57  -1.85 0.7 / 
\plot -1.6 2  -1.68 1.94  -1.73 1.8 /
\plot 1.4 2  1.32 1.94  1.27 1.8 /
\plot 1.15 0.7  1.1 0.57  1 0.5 / 
\plot 2 0.5  1.9 0.57  1.85 0.7 / 
\plot 1.6 2  1.68 1.94  1.73 1.8 /

\plot -3.4 4  -3.32 3.94  -3.27 3.8 / 
\plot -3.15 0.7  -3.1 0.57  -3 0.5 / 
\plot -4 0.5  -3.9 0.57  -3.85 0.7 / 
\plot -3.6 4  -3.68 3.94  -3.73 3.8 / 
\plot 3.4 4  3.32 3.94  3.27 3.8 / 
\plot 3.15 0.7  3.1 0.57  3 0.5 / 
\plot 4 0.5  3.9 0.57  3.85 0.7 / 
\plot 3.6 4  3.68 3.94  3.73 3.8 / 

\setdots <2pt>
\setlinear
\plot -1.5 0  -1.5 2 /
\plot -1 0  -1 0.5 /
\plot -2 0  -2 0.5 /
\plot 0 0  0 0.5 /
\plot 1.5 0  1.5 2 /
\plot 1 0  1 0.5 /
\plot 2 0  2 0.5 /
\plot 5 0  5 0.5 /

\plot -3.5 0  -3.5 4 /
\plot -3 0  -3 0.5 /
\plot -4 0  -4 0.5 /
\plot 0 0  0 0.5 /
\plot 3.5 0  3.5 4 /
\plot 3 0  3 0.5 /
\plot 4 0  4 0.5 /
\plot 5 0  5 0.5 /

\put {$x$} [lt] at 5.5 -.1
\put {$R$} [ct] at 5.1 -.1
\put {$0$}   [ct] at 0 -.1
\put {$-r_1$}  [ct] at -1.5 -.19
\put {$-r_2$}  [ct] at -3.5 -.19
\put {$r_1$}  [ct] at 1.5 -.26
\put {$r_2$}  [ct] at 3.5 -.26
\put {$-R$}  [ct] at -5 -.1
\put {$f(x)$} [rt] at -5.1 5

\setdots <0pt>
\linethickness=3pt
\putrule from -2 0 to -1 0
\putrule from 2 0 to 1 0
\putrule from -4 0 to -3 0
\putrule from 4 0 to 3 0

\linethickness=1pt
\putrule from -2 .15 to -2 -.15 
\putrule from -1 .15 to  -1 -.15 
\putrule from 2 .15 to 2 -.15 
\putrule from 1 .15 to  1 -.15 

\putrule from -4 .15 to -4 -.15 
\putrule from -3 .15 to  -3 -.15 
\putrule from 4 .15 to 4 -.15 
\putrule from 3 .15 to  3 -.15 
\putrule from 5 .15 to  5 -.15 

\endpicture
\]
\caption{Illustration of Theorem \ref{two comp. quench set}(ii) for $n=1$.}
\label{fig th two comp quench}
\end{figure}

\begin{remark}
(i) In Theorem \ref{two comp. quench set}, the touchdown set in particular has at least two connected components if $\rho$ is sufficiently small (at least four if $n=1$).
The profile in Theorem \ref{two comp. quench set}(i) is obtained, by a limiting argument, by constructing a two-bump profile, where each bump is contained in $B(x_1,\rho)$, $B(x_2,\rho)$ respectively, and smoothly varying the height in each bump. 
For (ii), we follow the same idea but considering radially symmetric profiles and 
 replacing balls with annuli.

(ii) In the case of the one-dimensional nonlinear heat equation with constant coefficients, 
for any prescribed finite set, it was shown in \cite{Me92} that there exists an initial data 
for which the solution blows up exactly on this set. 
Such a construction does not seem easy to transpose to problem (\ref{quenching problem}).
\end{remark}

The proofs of the results in this subsection crucially depend on, rather delicate, stability properties of the touchdown set 
and time under small perturbations of the potential. We here state the following result,
 which may be of independent interest. Further results are given and proved in Section~\ref{stability results}.

\begin{theorem}[Continuity of the touchdown time and upper semi-continuity of the touchdown set]
\label{semi-continuity quenching set2}
Let $p>0$ and $\Omega\subset \mathbb{R}^n$ a smooth bounded domain.
Let $1\le q \le \infty$ with $q>\frac{n}{2}$, $B\subset \Omega$ a ball of radius $r>0$, $M\geq \mu>\mu_0(p,n) r^{-2}$
and set 
\begin{equation}\label{deftildeE}
\tilde E=\bigl\{f\in E;\, M\ge f\geq\mu\chi_B\bigr\}.
\end{equation}
For all $f\in \tilde E$ with $\mathcal{T}_f\subset\subset\Omega$
and all $\sigma>0$, there exists $\varepsilon>0$ such that,
$$\hbox{ if $g\in \tilde E$ and $\|g-f\|_q \leq \varepsilon$, \ then $|T_g-T_f|\le \sigma$ and 
$\mathcal{T}_g\subset \mathcal{T}_f+B(0,\sigma)$.}$$
\end{theorem}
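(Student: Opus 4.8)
The plan is to establish the continuity of $T_f$ and the upper semi-continuity of $\mathcal{T}_f$ via a combination of comparison arguments on the interval $[0, T_f - \sigma]$ and a quantitative estimate that forces quenching (and locates the quenching set) shortly thereafter. First I would fix $f \in \tilde E$ with $\mathcal{T}_f \subset\subset \Omega$ and recall that on the compact time interval $[0, T_f - \sigma/2]$ the solution $u_f$ satisfies $u_f \le 1 - c_0$ for some $c_0 = c_0(f,\sigma) > 0$, by definition of $T_f$ and parabolic regularity. The nonlinearity $s \mapsto (1-s)^{-p}$ is Lipschitz on $\{s \le 1 - c_0/2\}$, so a standard Gronwall-type argument on the difference $w = u_g - u_f$, using the integral (Duhamel) formulation and the contraction estimate together with $\|g - f\|_q \le \varepsilon$ (and the $L^q$--$L^\infty$ smoothing of the heat semigroup, which needs $q > n/2$), gives $\|u_g(t) - u_f(t)\|_\infty \le C\varepsilon$ uniformly on $[0, T_f - \sigma/2]$ as long as $u_g$ stays below $1 - c_0/4$ there. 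Choosing $\varepsilon$ small closes this bootstrap, so $u_g$ exists at least up to $T_f - \sigma/2$ and $T_g \ge T_f - \sigma$.

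The reverse inequality $T_g \le T_f + \sigma$ should follow from the lower bound mechanism already present in the paper: since $g \ge \mu \chi_B$ with $\mu > \mu_0(p,n) r^{-2}$, there is a universal finite-time quenching estimate giving $T_g \le T^*(p,n,\mu,r)$, but more usefully, at time $t_1 = T_f - \sigma/2$ we know $u_g(t_1) \ge u_f(t_1) - C\varepsilon$, and $u_f(t_1)$ is close to $1$ near $\mathcal{T}_f$; comparing $u_g$ on $[t_1, \infty)$ with a subsolution built from the first eigenfunction of $-\Delta$ on $B$ (this is exactly where $\mu_0(p,n)$ enters, cf.\ \eqref{DefMu0}) forces $u_g$ to reach $1$ within an additional time $\sigma/2$, provided $\sigma$ was used to fix how close to $1$ we need $u_f(t_1)$ to be, and this is arranged by first shrinking $\sigma$. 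Hence $|T_g - T_f| \le \sigma$.

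For the quenching set inclusion, I would argue by contradiction-free localization: away from $\mathcal{T}_f$, i.e.\ on $\overline\Omega \setminus (\mathcal{T}_f + B(0,\sigma/2))$, there is $c_1 > 0$ with $u_f \le 1 - c_1$ on $[0, T_f]$ (using that $\mathcal{T}_f$ is closed, $\mathcal{T}_f \subset\subset \Omega$, and a no-quenching-outside-$\mathcal{T}_f$ type bound near $\partial\Omega$ from the definition of the quenching set together with standard estimates). On a slightly larger compact set $K \subset\subset \Omega$ still disjoint from $\mathcal{T}_f + B(0,\sigma/4)$, one has by the $L^q$-stability estimate extended up to time $T_g$ (using $|T_g - T_f| \le \sigma$ and interior parabolic estimates, the singular term staying bounded on $K$) that $u_g \le 1 - c_1/2$ on $K \times [0, T_g)$, so no point of $K$ — in particular no point outside $\mathcal{T}_f + B(0,\sigma)$ — belongs to $\mathcal{T}_g$. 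This gives $\mathcal{T}_g \subset \mathcal{T}_f + B(0,\sigma)$.

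The main obstacle is making the stability estimate $\|u_g - u_f\|_\infty \le C\varepsilon$ survive \emph{up to a time arbitrarily close to the quenching time $T_f$}, since there $u_f$ approaches $1$ and the Lipschitz constant of $(1-s)^{-p}$ blows up; the point is that one only needs the estimate on the region where $u_f$ stays bounded away from $1$, which by continuity of $u_f$ and compactness is a full neighborhood of $[0, T_f - \sigma/2] \times \overline\Omega$ minus a neighborhood of $\mathcal{T}_f$, and near $\mathcal{T}_f$ one instead uses monotonicity/comparison ($u_g \ge u_f - C\varepsilon$ from the good region propagated inward by the maximum principle, together with $u_f \le u_g$-type one-sided bounds are not available, so one must be careful and rather use the Duhamel estimate with the singular kernel controlled by the fact that $\int_0^{T_f}\|(1-u_f(s))^{-p-1}\|_{L^\infty_{loc}}\,ds$ may diverge). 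Handling this borderline integrability — i.e.\ showing the perturbation of the quenching time and set is controlled even though the difference equation has a non-integrable-in-time coefficient near $\mathcal{T}_f \times \{T_f\}$ — is the delicate technical core, and I would expect to resolve it by splitting space into the ``far'' region (clean Gronwall) and the ``near'' region (one-sided comparison giving $u_g$ quenches, which is all that is needed there since we do not claim $\mathcal{T}_f \subset \mathcal{T}_g + B(0,\sigma)$).
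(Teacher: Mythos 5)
Your overall architecture (Gronwall stability up to $T_f-\sigma/2$, then a mechanism forcing quenching soon after, then exclusion of touchdown away from $\mathcal{T}_f$) matches the paper's, and you correctly flag the central difficulty, but two of your three mechanisms have genuine gaps. First, for $T_g\le T_f+\sigma$ you propose a subsolution built from the first eigenfunction of $-\Delta$ on the ball $B$ where $g\ge\mu$. Starting this Kaplan-type argument at time $t_1$ only helps if the weighted average $\int_B U_g(t_1)\tilde\varphi$ is close to $1$, and there is no reason for that: $B$ is fixed by the class $\tilde E$ and can be disjoint from $\mathcal{T}_f$ (the whole point of Theorems \ref{single point quenching} and \ref{theorem one well} is that touchdown can occur where $f$ is small, far from where $f$ is large), so near the points where $U_g(t_1)$ is close to $1$ you have no lower bound on $g$ to exploit. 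What the paper uses instead is the uniform type~I estimate of Proposition \ref{type I qualitative}, $1-U_g(t,x)\ge\gamma\,\delta(x)(T_g-t)^{1/(p+1)}$ with $\gamma=\gamma(p,\Omega,M,r)$; evaluated at an interior point where $U_g(t_1,\cdot)$ is close to $1$ (supplied by Proposition \ref{Lq continuity}), this immediately bounds $T_g-t_1$. Your proposal never invokes this estimate, and without some uniform-in-$g$ quenching-rate input this step does not go through.

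Second, for the inclusion $\mathcal{T}_g\subset\mathcal{T}_f+B(0,\sigma)$ you need $U_g\le 1-c$ away from $\mathcal{T}_f$ up to time $T_g$, not just up to $T_f-\sigma/2$, and your proposed bridge does not close. A ``clean Gronwall in the far region'' is circular and nonlocal: the difference equation restricted to a compact set $K$ away from $\mathcal{T}_f$ carries uncontrolled lateral data on $\partial K\times(0,T_g)$ coming from the region near $\mathcal{T}_f$, where the coefficient $(1-u)^{-p-1}$ is not integrable in time, and a priori boundedness of $U_g$ on $K$ is exactly the conclusion sought. The paper's resolution is again the type~I estimate: away from $\partial\Omega$ it yields $g(x)(1-U_g)^{-p}\le C(T_g-t)^{-p/(p+1)}$, whose exponent is strictly below $1$, hence (Step~1 of the proof of Theorem \ref{semi-continuity quenching set}) a uniform one-sided H\"older-in-time bound $U_g(T_g,x)\le U_g(t,x)+C(T_g-t)^\beta$ valid up to the touchdown time; combined with the continuity of $T$, this transports the bound at a time $t_0$ close to $T_f$ all the way to $T_g$. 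This is a genuinely different mechanism from a Gronwall estimate on $U_g-U_f$, and it is the missing idea. Finally, you leave the boundary region untreated: your compact set $K$ excludes a neighborhood of $\partial\Omega$, where the type~I (hence H\"older) estimate degenerates, so touchdown of $U_g$ there is not ruled out by your argument; the paper needs a dedicated supersolution argument near $\partial\Omega$ (Step~3 of that proof) for exactly this purpose.
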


\bigskip
On the other hand, we can show that the {\it continuity} of the touchdown set with respect to $f$ fails in general
-- see Proposition \ref{Tdiscontinuous}, which will be a consequence of the proof of Theorem \ref{two comp. quench set}.
Actually, considering the profile constructed in that proof and depicted in fig.~\ref{fig th two comp quench} for $n=1$,
it is shown that the touchdown points in the inner bumps
immediately disappear as soon as the height of this plateau is decreased.

\goodbreak

\begin{remark}
For results on continuity of the existence time in the case of blow-up problems, see 
\cite{BC87}, \cite{GRZ}, \cite{Qu03}, \cite{QS} and the references therein.
For results on the semi-continuity of the blow-up set, see~\cite{Me92}, \cite{AFPB}.
We note that the latter are restricted to one-dimensional problems,
due to the lack of estimates near every possible point in the blow-up set.
We are here able to avoid such restriction in the case of quenching problems, 
taking advantage of a time integrability property of the RHS of the PDE in (\ref{quenching problem})
up to the quenching time (see the first step of the proof of Theorem \ref{semi-continuity quenching set}).
However we have to face some additional difficulties due to the lack of a type I estimate up to the boundary.
\end{remark}

The outline of the rest of the paper is as follows.
In Section 2, we give some basic estimates for the touchdown time $T$, which will be useful in the sequel.
Sections 3 and 4 are devoted to the proofs of Theorems~\ref{local result dim n} and \ref{global result dim n},
based on refinements of the approach in \cite{GS}. Namely, in Section 3, we establish a type I estimate
for the touchdown rate away from the boundary. In Section 4, we establish a no-touchdown criterion
under an (optimal) smallness condition on $f$, assuming a local type I estimate.
We then combine it with the estimate obtained in Section 3 to conclude the proof.
In Section~5 we prove results on the continuity of the touchdown time and the semicontinuity of the touchdown set
under small perturbations of the permittivity profile $f$.
In Section 6, we then apply them, along with Theorem~\ref{local result dim n} and our type I estimate,
to establish Theorems \ref{single point quenching}, \ref{theorem one well} and \ref{two comp. quench set}.   

\goodbreak


\section{Basic estimates for the touchdown time} 

The following simple estimates will be useful in the sequel.

\begin{lemma}[Lower estimate for $T$]\label{lower estimate for T} 
Let $u$ be the solution of (\ref{quenching problem}). Then, $T\geq T_*:= \dfrac{1}{(p+1)\| f\|_{\infty}}$ and, for any $\tau\in (0,1)$ we have
$\| u(t_0)\|_{\infty} \leq 1 - \tau$,
where $ t_0 = t_0(\tau)=\dfrac{1- \tau^{p+1}}{(p+1) \| f\|_{\infty}}$. 
\end{lemma}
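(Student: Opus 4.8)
The plan is to compare the solution $u$ of (\ref{quenching problem}) with the solution $v$ of the spatially homogeneous ODE obtained by dropping the Laplacian and replacing $f(x)$ by its maximum. First I would introduce $v(t)$ solving $v'(t)=\|f\|_\infty(1-v(t))^{-p}$ with $v(0)=0$; this can be integrated explicitly by separating variables, since $\frac{d}{dt}\bigl[(1-v)^{p+1}\bigr]=-(p+1)(1-v)^p v' = -(p+1)\|f\|_\infty$, so that $(1-v(t))^{p+1}=1-(p+1)\|f\|_\infty t$. Hence $v$ is well-defined, smooth and increasing on $[0,T_*)$ with $T_*=\frac{1}{(p+1)\|f\|_\infty}$, and $v(t)\uparrow 1$ as $t\uparrow T_*$; moreover for $t=t_0(\tau)=\frac{1-\tau^{p+1}}{(p+1)\|f\|_\infty}$ one reads off $1-v(t_0)=\tau$, i.e. $v(t_0)=1-\tau$.

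Next I would invoke the comparison principle. The function $v$, viewed as a function of $(t,x)$ that is constant in $x$, is a supersolution of the parabolic problem (\ref{quenching problem}): indeed $v_t-\Delta v = \|f\|_\infty(1-v)^{-p}\ge f(x)(1-v)^{-p}$ since $0\le f\le\|f\|_\infty$ and $1-v\in(0,1]$, while on the parabolic boundary $v\ge 0=u$ (on $\partial\Omega$ and at $t=0$). Also $v<1$ on $[0,T_*)$, so the nonlinearity stays in its regular regime. By the comparison principle for the semilinear parabolic equation, $u(t,x)\le v(t)<1$ for all $x\in\overline\Omega$ and all $t\in[0,\min(T,T_*))$. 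This forces $T\ge T_*$: if we had $T<T_*$, then $\|u(t)\|_\infty\le v(t)\le v(T)<1$ would remain bounded away from $1$ up to time $T$, and standard continuation would extend the classical solution beyond $T$, contradicting maximality. Finally, the bound $\|u(t_0)\|_\infty\le v(t_0)=1-\tau$ is immediate from the same comparison inequality once we note $t_0(\tau)<T_*$ for $\tau\in(0,1)$.

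I do not expect any serious obstacle here; the only point requiring a little care is the precise form of the comparison principle used, namely that it applies to the singular nonlinearity $(1-u)^{-p}$ as long as the ordered pair of sub/supersolutions takes values in $[0,1)$ where the nonlinearity is smooth and increasing — this is exactly the regime guaranteed by $u<1$ (valid by definition for $t<T$) and $v<1$ (valid for $t<T_*$), so no difficulty arises. The explicit ODE integration is the computational heart and is entirely routine.
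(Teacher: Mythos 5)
Your proposal is correct and follows essentially the same route as the paper: comparison with the explicit ODE solution $y(t)$ of $y'=\|f\|_\infty(1-y)^{-p}$, $y(0)=0$, integrated via $(1-y)^{p+1}=1-(p+1)\|f\|_\infty t$. The extra care you take with the continuation argument for $T\ge T_*$ and the regime of validity of the comparison principle is sound but the paper leaves it implicit.
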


\begin{proof}
Let $y(t) \in C^1 (0,T_*)$ be the solution of the problem
\begin{equation*}\left\lbrace
\begin{array}{ll}
y' = \dfrac{\| f\|_{\infty}}{(1-y)^p}, & \mbox{for } t>0, \\
y(0) = 0. &  
\end{array}\right.
\end{equation*}
We have
$$\displaystyle\int_0^t y'(1-y)^p dt = \| f\|_{\infty} t$$
that is,
$$\dfrac{1}{p+1}  - \dfrac{(1-y(t))^{p+1}}{p+1} = \| f \|_{\infty} t, \quad\hbox{ for all $t\in [0,T_*)$}.$$
Since, by the comparison principle, $T\ge T_*$ and $u(t,x)\le y(t)$ 
for all $(t,x)\in [0,T)\times\Omega$,
it follows that $\|u(t_0(\tau))\|_{\infty} \leq y(t_0(\tau))=1-\tau$.
\end{proof}

\begin{lemma}[Upper estimate for $T$]\label{upper estimate for T}
Assume that $B(0,r)\subset\Omega$, $f\geq \mu\chi_{B(0,r)}$, with 
$\mu>\mu_0(p,n) r^{-2}$, where $\mu_0(p,n)$ is defined in \eqref{DefMu0}.
 Let $u$ be the solution of problem (\ref{quenching problem}).
Then $T<\infty$ and $T$ satisfies the upper bound 
$$T\leq \dfrac{1}{(p+1)(\mu - \mu_0(p,n)r^{-2})}.$$
\end{lemma}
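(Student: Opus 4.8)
The plan is to construct an explicit subsolution on the ball $B(0,r)$ that quenches in finite time, and then use the comparison principle to deduce that $u$ quenches no later. The natural candidate is a separated-variables profile built from the first Dirichlet eigenfunction $\varphi_1$ of $-\Delta$ on $B(0,r)$ (normalized by $\|\varphi_1\|_\infty=1$, $\varphi_1>0$), with eigenvalue $\lambda_1 r^{-2}$ (by scaling, since $\lambda_1$ is the eigenvalue on the unit ball $B_1$). One expects something of the form $\underline u(t,x) = a(t)\,\varphi_1(x)$ where $a(t)$ increases from $0$ to $1$ in finite time, and the key is to choose the ODE for $a$ so that $\underline u$ is a subsolution of $u_t-\Delta u = f(x)(1-u)^{-p}$ on $B(0,r)$ with zero boundary and initial data.

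**First I would** set up the subsolution inequality. On $B(0,r)$ we have, using $-\Delta\varphi_1 = \lambda_1 r^{-2}\varphi_1$ and $f\ge\mu$ there,
\[
\underline u_t-\Delta\underline u - f(x)(1-\underline u)^{-p} \le a'\varphi_1 + \lambda_1 r^{-2} a\varphi_1 - \mu(1-a\varphi_1)^{-p}.
\]
Since $0<\varphi_1\le 1$ and $a\in[0,1)$, we have $(1-a\varphi_1)^{-p}\ge (1-a)^{-p}\ge 1$; on the other hand $(1-a)^{-p}\varphi_1$ is not obviously dominated by $(1-a\varphi_1)^{-p}$, so the cleanest route is to demand the stronger pointwise inequality
\[
a'\varphi_1 + \lambda_1 r^{-2}a\varphi_1 \le \mu\,\varphi_1\,(1-a)^{-p},
\]
i.e. after dividing by $\varphi_1$, the scalar ODE condition $a' \le \mu(1-a)^{-p} - \lambda_1 r^{-2}a$. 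Here I would note the factor $\varphi_1$ in front of the singular term can be justified because $(1-a\varphi_1)^{-p}\ge \varphi_1(1-a)^{-p}$: indeed $(1-a\varphi_1)^{-p}\ge 1\ge \varphi_1$ when $(1-a)^{-p}\le 1$... which fails, so more carefully one checks $(1-a\varphi_1)^{-p}\ge\varphi_1(1-a)^{-p}$ is equivalent to $(1-a)^p\ge\varphi_1(1-a\varphi_1)^p$, and since $1-a\le 1-a\varphi_1$ this holds. So it suffices to choose $a$ solving $a' = \mu(1-a)^{-p}-\lambda_1 r^{-2}a$, $a(0)=0$.

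**The main point** is then to show this ODE blows up to $a\to 1^-$ in finite time and to extract the quantitative bound. Multiply by $(1-a)^p$: since $a<1$ we get $a'(1-a)^p = \mu - \lambda_1 r^{-2}a(1-a)^p \ge \mu - \lambda_1 r^{-2}a(1-a)^p$, and one estimates $\sup_{s\in[0,1]} s(1-s)^p = \frac{p^p}{(p+1)^{p+1}}$ (attained at $s=\frac{1}{p+1}$), so $a'(1-a)^p \ge \mu - \lambda_1 r^{-2}\frac{p^p}{(p+1)^{p+1}} = \mu - \mu_0(p,n)r^{-2} =: c>0$ by hypothesis. Integrating, $\frac{1}{p+1}\bigl(1-(1-a(t))^{p+1}\bigr) \ge ct$, so $a$ reaches $1$ at some time $\le \frac{1}{(p+1)c}$. **The only real obstacle**, and it is minor, is making the comparison argument rigorous: $\underline u$ is continuous but $\underline u_t$ may blow up as $a\to1$, so I would apply the comparison principle on $[0,T'-\delta]$-type time intervals, or rather compare on $B(0,r)$ where $\underline u=0\le u$ on the parabolic boundary (using $u\ge 0$ by the maximum principle) and $\underline u(0,\cdot)=0=u(0,\cdot)$, concluding $u\ge\underline u$ on $[0,T)\times B(0,r)$ as long as both exist; since $\underline u\to 1$ somewhere (at $x=0$, where $\varphi_1=1$) by time $\frac{1}{(p+1)c}$, the solution $u$ cannot remain classical beyond that, giving $T\le\frac{1}{(p+1)(\mu-\mu_0(p,n)r^{-2})}$. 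Finally I note $B(0,r)$ may be assumed centered at $0$ after translation, which does not affect $\lambda_1$ or the conclusion.
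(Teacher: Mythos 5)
There is a genuine gap at the pivotal step of your reduction to a scalar ODE. You need the pointwise inequality $(1-a\varphi_1)^{-p}\ge \varphi_1(1-a)^{-p}$ on $B(0,r)$, which you correctly observe is equivalent to $(1-a)^p\ge \varphi_1\,(1-a\varphi_1)^p$; but your justification ``since $1-a\le 1-a\varphi_1$ this holds'' has the inequality backwards: $1-a\le 1-a\varphi_1$ gives $(1-a)^p\le (1-a\varphi_1)^p$, which is the \emph{wrong} direction. The inequality is in fact false: take $p=1$, $a=0.9$, $\varphi_1=0.5$; then $(1-a\varphi_1)^{-p}=(0.55)^{-1}\approx 1.82$ while $\varphi_1(1-a)^{-p}=0.5\cdot 10=5$. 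The failure is structural, not just a fixable estimate: for the separated ansatz $\underline u=a(t)\varphi_1(x)$ the admissible growth of $a$ is governed by $\inf_{\varphi\in(0,1]}\varphi^{-1}\bigl[\mu(1-a\varphi)^{-p}-\lambda_1r^{-2}a\varphi\bigr]$, and for $a\ge \frac{1}{p+1}$ this infimum equals $a\,\frac{(p+1)^{p+1}}{p^p}\bigl(\mu-\mu_0(p,n)r^{-2}\bigr)$, which stays \emph{bounded} as $a\to 1^-$ rather than blowing up like $\mu(1-a)^{-p}$. The resulting quenching time of the corrected subsolution carries an extra factor involving $\ln(p+1)$ and exceeds the claimed bound $\frac{1}{(p+1)(\mu-\mu_0 r^{-2})}$ for large $p$, so this route cannot deliver the stated constant in general.

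The paper obtains exactly the scalar differential inequality you were aiming for, $y'\ge \mu(1-y)^{-p}-\lambda_1 r^{-2}y$, but for the eigenfunction-weighted average $y(t)=\int_{B(0,r)}u(t)\tilde\varphi$ with $\|\tilde\varphi\|_{L^1}=1$ (Kaplan's method): multiplying the equation by $\tilde\varphi$, integrating by parts (the boundary term has a sign), and applying Jensen's inequality to the convex function $s\mapsto(1-s)^{-p}$ gives $\int(1-u)^{-p}\tilde\varphi\ge(1-y)^{-p}$ with no pointwise comparison needed. From there your final computation --- bounding $s(1-s)^p$ by $\frac{p^p}{(p+1)^{p+1}}$ and integrating --- is exactly the paper's and is correct. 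If you replace the subsolution construction by this averaging argument, the rest of your write-up goes through verbatim.
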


\begin{proof}
Let $\tilde{\varphi}$ denote the first eigenfunction of $-\Delta$ in $H^1_0(B(0,r))$,
with $\|\tilde{\varphi} \|_{L^1}=1$ and set $\lambda_r$, the corresponding eigenvalue, i.e $\lambda_r = \lambda_1 r^{-2}$.
Set $y(t)=\int_{B(0,r)} u(t)\tilde{\varphi}$ and note that $y<1$ on $[0,T)$.
Multiplying (\ref{quenching problem}) by $\tilde{\varphi}$, integrating by parts over $B(0,r)$ and using Jensen's inequality
(in view of the convexity of the function $(1-s)^{-p}$), we obtain
$$y'(t) \ge \mu (1-y(t))^{-p}-\lambda_r y(t),\quad 0<t<T.$$
An elementary computation shows that 
$$\displaystyle\max_{0\le s<1}s(1-s)^p=\displaystyle\max_{0\le X<1}X^p-X^{p+1}=\dfrac{p^p}{(p+1)^{p+1}}.$$
It follows that
$$y'(t) \ge \Bigl(\mu -\dfrac{p^p}{(p+1)^{p+1}}\lambda_r \Bigr)(1-y(t))^{-p},\quad 0<t<T.$$
The conclusion follows by integration.
\end{proof}


\section{Qualitative Type I estimate}

Following the approach in \cite{GS}, a key ingredient in 
the proofs of Theorems \ref{local result dim n} and \ref{global result dim n} is the following type~I estimate for $u$ away from the boundary.

\begin{proposition}[type~I estimate]\label{type I qualitative}
Under assumption (\ref{condMB}), the solution $u$ of problem (\ref{quenching problem}) satisfies
\begin{equation}\label{EstimTypeI}
u(t,x) \le 1 - \gamma \delta(x) (T-t)^{\frac{1}{p+1}}, \quad \hbox{ for all $t\in [0,T)$ and $x\in \Omega$},
\end{equation}
where $\gamma$ depends only on $p, \Omega, M, r$.
\end{proposition}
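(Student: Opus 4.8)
\textbf{Proof plan for Proposition~\ref{type I qualitative}.}
The plan is to establish the type~I estimate \eqref{EstimTypeI} by a parabolic maximum-principle / auxiliary-function argument, in the spirit of the Friedman--McLeod technique as adapted to quenching problems in \cite{GS}. The idea is to compare $u$ with a suitable barrier that degenerates like $\delta(x)(T-t)^{1/(p+1)}$ near the quenching time. First I would introduce the function $w := \delta(x)(1-u)^{-p}$ — or, more conveniently, work with $v := 1-u$ and consider an auxiliary quantity of the form $J := v_t + c\,\psi(x)\,v^{-p}$ (or $J := \delta(x) v^{-(p+1)} - \text{something}$), where $\psi$ is comparable to $\delta$ away from the boundary and the constant $c>0$ is to be chosen small. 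The key computation is to plug $J$ into the linear parabolic operator $\partial_t - \Delta$ and show, using the equation $v_t = \Delta v - f(x) v^{-p}$, that $J$ satisfies a differential inequality of the form $J_t - \Delta J \le (\text{bounded coefficient})\,J$ in the region where $J$ might be positive, so that the maximum principle forces $J\le 0$ throughout $[0,T)\times\Omega$, provided the initial and boundary data of $J$ are controlled.

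The second ingredient is to control $J$ on the parabolic boundary. On $\partial\Omega$ we have $\delta=0$, so the $\delta$-weight kills the singular term and $J$ reduces to $v_t = \Delta v$, which is controlled by parabolic regularity up to the boundary (recall $u_t-\Delta u = f(1-u)^{-p}$ with the RHS vanishing on $\partial\Omega$ only in the weighted sense, so one needs a bit of care — Hopf's lemma and the boundedness of $f$ near $\partial\Omega$ give $v_t = \partial_\nu$-type bounds). At $t=0$, $u\equiv 0$ so $v\equiv 1$ and $J$ is explicitly bounded. The small window near $t=0$, up to some $t_0=t_0(\tau)$ given by Lemma~\ref{lower estimate for T}, is handled separately: there $\|u\|_\infty\le 1-\tau$, so $v$ is bounded below and all quantities are smooth and bounded, so one only needs the maximum principle on $[t_0,T)\times\Omega$ with controlled data at $t=t_0$. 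Integrating the resulting inequality $J\le 0$, i.e. $v_t \le -c\,\psi(x) v^{-p}$, in time from $t$ to $T$ (using that $v\to 0$ as $t\to T$ at quenching points, and $v\to\ell(x)\ge 0$ in general) yields $v^{p+1}(t,x) \ge (p+1)c\,\psi(x)(T-t)$, which is exactly \eqref{EstimTypeI} after taking $(p+1)$-th roots and absorbing constants into $\gamma$. The dependence of $\gamma$ on $p,\Omega,M,r$ enters through: $M$ bounding $\|f\|_\infty$ and $T_f$ (hence bounding the coefficients in the differential inequality and the data at $t_0$), $r$ and $B$ guaranteeing $T_f$ is finite and providing a lower bound on how fast quenching occurs, and $\Omega$ through the geometry of $\delta$ and the constant in Hopf's lemma.

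The main obstacle I expect is twofold. First, choosing the auxiliary function and the weight $\psi$ so that the zeroth-order term produced by the singular nonlinearity $f v^{-p}$ has the \emph{right sign} after differentiation: one typically gets a term like $p f \psi v^{-p-1} J$-ish plus an uncontrolled term proportional to $(\Delta\psi) v^{-p}$ or $|\nabla\psi|^2 v^{-p-1}$, and one must exploit that $\psi\sim\delta$ is (locally) concave-ish near $\partial\Omega$, or replace $\delta$ by a smooth modification $\psi$ with $-\Delta\psi$ bounded and $\psi\le C\delta$, so that the bad terms are dominated. This is exactly the ``refinement of the approach in \cite{GS}'' the introduction alludes to. Second, the lack of control of $v_t$ up to the boundary uniformly in $t$: near $\partial\Omega$ one must verify that $J\le 0$ on a boundary strip not via the PDE interior estimate but via a separate barrier comparison using $\delta$ as a supersolution ingredient, i.e. bounding $v_t$ from below by $-C\delta^{-1}\cdot(\text{something})$ and checking consistency with the claimed bound; handling this boundary layer carefully, without assuming decay of $f$ near $\partial\Omega$, is the delicate point. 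I would structure the proof as: (1) reduce to $[t_0,T)$ using Lemma~\ref{lower estimate for T}; (2) construct $\psi$ and the auxiliary function $J$; (3) compute $J_t-\Delta J$ and verify the differential inequality in the region $\{J>0\}$; (4) check the sign of $J$ on the parabolic boundary of $[t_0,T)\times\Omega$, including the boundary strip; (5) apply the maximum principle and integrate in time to conclude.
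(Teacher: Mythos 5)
Your overall strategy (a Friedman--McLeod auxiliary function $J$ combining $u_t$ and a weighted power of $1-u$, a parabolic inequality for $J$, the maximum principle, and an integration in time) is indeed the skeleton of the paper's proof. But the proposal misses the two choices that actually make the argument close, and the fixes you suggest for the ``bad term'' would fail. First, the weight: you take $\psi\sim\delta$ and propose to dominate the uncontrolled terms by choosing $\psi$ ``concave-ish'' or with $-\Delta\psi$ bounded. This has the wrong sign and the wrong strength. The dangerous term in the computation is $-h'^2|\nabla a|^2/(a h'')\sim -|\nabla a|^2 a^{-1}(1-u)^{-p}$ (after Cauchy--Schwarz on the cross term), and where $f$ may vanish the \emph{only} available positive contribution is $h\,\Delta a$; so one needs $\Delta a>0$ with the precise lower bound $h h''\,a\Delta a\ge h'^2|\nabla a|^2$. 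With $h=(1-u)^{-p}+1$ one has $hh''\ge\frac{p+1}{p}h'^2$, and taking $a=\phi^\alpha$ with $\phi$ harmonic gives $a\Delta a=\frac{\alpha-1}{\alpha}|\nabla a|^2$, so the inequality holds exactly when $\alpha\ge p+1$. The paper therefore takes $a=\phi^{p+1}$, where $\phi$ is harmonic in $\Omega\setminus B$ with $\phi=1$ on $\partial B$ and $\phi=0$ on $\partial\Omega$; hence $a\sim\delta^{p+1}$, not $\delta$, and the final integration $(1-u)^{p+1}\ge(p+1)\eps\,a(x)(T-t)$ yields precisely the factor $\delta(x)$ in \eqref{EstimTypeI}. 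A weight $\psi\sim\delta$ (harmonic or concave) leaves the $|\nabla\psi|^2\psi^{-1}(1-u)^{-p}$ term unabsorbed wherever $f$ vanishes, and the method breaks down there.

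Second, you have not used the hypothesis $f\ge r\chi_B$, which is essential in two places. Inside the ball $B$ the harmonicity trick is unavailable (the extension of $a$ into $B$ has no sign on $\Delta a$), and the bad terms are instead absorbed by the positive term $p\,a f(1-u)^{-p-1}\ge p\sigma r(1-u)^{-p-1}$ in the region where $u$ is close to $1$; in the complementary region of $B$, and at the initial time $t_0$, one must verify the \emph{sign} $J\ge 0$ directly, which requires the quantitative lower bound $u_t\ge c_0e^{-c_1t}\delta(x)$ (Lemma~\ref{low bound u_t}, proved from the Gaussian-type lower bound for the Dirichlet heat kernel together with $f\ge r\chi_B$). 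Saying that $J$ is ``explicitly bounded'' at $t=t_0$ is not enough: at $t=0$ one has $u_t=f$, which may vanish, so the comparison cannot start at $t=0$ and the positivity of $u_t$ proportional to $\delta(x)$ for $t\ge t_0$ is a genuinely needed ingredient. (By contrast, your worry about controlling $v_t$ on $\partial\Omega$ is moot: $u\equiv 0$ on $\partial\Omega$ gives $u_t=0$ there, and $a=0$ on $\partial\Omega$, so $J=0$ on the lateral boundary.) Finally, to get $\gamma$ depending only on $p,\Omega,M,r$ one must prove that the constants in the construction of $a$ are independent of the \emph{location} of $B$; this requires a uniform Hopf-lemma/compactness argument for the family $\phi_y$, which your outline does not address.
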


A similar estimate is given in \cite[Theorem 1.2]{GS}, except that the constant $\gamma$ depends on $u$ in an unspecified way. 
We stress that the precise dependence of $\gamma$ is here a key feature, not only for the proofs
of Theorems \ref{local result dim n} and \ref{global result dim n},
but also in view of the stability results for the touchdown time and set in Section~5,
which require uniform type I estimates with respect to the permittivity profile $f$.
Proposition~\ref{type I qualitative} will be proved by means of the maximum principle 
applied to an auxiliary function of the form
\begin{equation}\label{function J}
J(t,x) = u_t - \eps a(x) h(u).
\end{equation}
We here follow the approach of \cite{GS}, which was a modification of the Friedman-McLeod method (\cite{FM}; see also \cite{G1}).
The main new ideas in \cite{GS} were to construct $h$ as a suitable perturbation of the nonlinearity 
and $a(x)$ as an appropriate function vanishing on the boundary.
In order to obtain the precise dependence of $\gamma$,
special care is here necessary in the construction and in the estimates of the function $a$.

\subsection{Basic computation for the function $J$.}

The basic computation for the function $J$ is contained in the following lemma.
Although it is close to \cite[Lemma 2.1]{GS}, we give the proof for convenience and completeness.

\begin{lemma}\label{basic computation}
Let  
$a\in C^2(\Omega)$ be a positive function.
Let $u$ be the solution of  (\ref{quenching problem}) 
and let $J$ be given by (\ref{function J}) in $(0,T)\times\Omega$, 
where 
\begin{equation}\label{function h}
h(u) = (1-u)^{-p} + 1,\quad\  0\le u<1.
\end{equation}
Then 
\begin{equation}\label{basic computation 1}
J_t - \Delta J - pf(x)(1-u)^{-p-1}J = \eps  \Theta\quad\hbox{ in $(0,T)\times \Omega$,} 
\end{equation}
where
\begin{equation}\label{basic computation 2}
\Theta = p a(x) f(x) (1-u)^{-p-1} + ah''(u) |\nabla u|^2 + 2h'(u)  \nabla a\cdot\nabla u + h(u) \Delta a.
\end{equation}
Moreover, we have $h''(u)>0$ for all $u\in [0,1)$ and
\begin{equation}\label{basic computation 3}
\Theta \geq \underbrace{ p a(x) f(x) (1-u)^{-p-1}}_{\tau_1} + \underbrace{h(u) \Delta a(x)}_{\tau_2}
 - \underbrace{\dfrac{h'^2(u)|\nabla a(x)|^2}{a(x)h''(u)}}_{\tau_3}.
\end{equation}
\end{lemma}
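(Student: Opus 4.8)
The plan is to prove the identity \eqref{basic computation 1}--\eqref{basic computation 2} by a direct computation, and then derive the lower bound \eqref{basic computation 3} by a completion-of-squares estimate on the cross term. The starting point is the equation $u_t - \Delta u = f(x)(1-u)^{-p}$, which I differentiate in $t$ to get $u_{tt} - \Delta u_t = pf(x)(1-u)^{-p-1}u_t$. Since $J = u_t - \eps a(x)h(u)$, I need $J_t - \Delta J$. Here $J_t = u_{tt} - \eps a h'(u) u_t$, and $\Delta J = \Delta u_t - \eps\,\Delta\bigl(a(x)h(u)\bigr)$, so the main sub-task is to expand $\Delta\bigl(a h(u)\bigr) = h(u)\Delta a + 2\nabla a\cdot\nabla h(u) + a\Delta h(u)$, and then $\nabla h(u) = h'(u)\nabla u$, $\Delta h(u) = h''(u)|\nabla u|^2 + h'(u)\Delta u$.

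Assembling these and using the $t$-differentiated equation to replace $u_{tt}-\Delta u_t$, the terms $\eps a h'(u)(u_t - \Delta u)$ appear; and since $u_t - \Delta u = f(x)(1-u)^{-p}$, combined with $h'(u) = p(1-u)^{-p-1}$ this produces the term $\eps p a(x) f(x) (1-u)^{-p-1}$ (the $\tau_1$ term, which also survives in $\Theta$). One then regroups to isolate the operator $J_t - \Delta J - pf(x)(1-u)^{-p-1}J$ acting on $J$ itself (the coefficient $pf(x)(1-u)^{-p-1}$ coming precisely from differentiating the nonlinearity), leaving on the right-hand side exactly $\eps\Theta$ with $\Theta$ as in \eqref{basic computation 2}. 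This is bookkeeping; the only place to be careful is tracking which $\eps a h(u)$-pieces get absorbed into the $-pf(1-u)^{-p-1}J$ term versus which remain in $\Theta$.

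For the second part, $h''(u) = p(p+1)(1-u)^{-p-2} > 0$ for $u\in[0,1)$ is immediate from \eqref{function h}. The inequality \eqref{basic computation 3} then follows from estimating the cross term $2h'(u)\nabla a\cdot\nabla u$ from below: since $a>0$ and $h''(u)>0$, I can write
\[
a h''(u)|\nabla u|^2 + 2h'(u)\nabla a\cdot\nabla u = a h''(u)\Bigl|\nabla u + \frac{h'(u)}{a h''(u)}\nabla a\Bigr|^2 - \frac{h'^2(u)|\nabla a|^2}{a h''(u)},
\]
so dropping the nonnegative square gives $\Theta \ge \tau_1 + \tau_2 - \tau_3$ with $\tau_2 = h(u)\Delta a$ and $\tau_3 = h'^2(u)|\nabla a|^2/(a h''(u))$, which is exactly \eqref{basic computation 3}. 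There is no real obstacle here; if anything, the mildly delicate point is simply making sure the algebraic identity is assembled without sign errors, since the lemma is stated with an equality \eqref{basic computation 1} rather than an inequality, so every term must be accounted for exactly.
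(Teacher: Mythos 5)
Your plan is the same as the paper's: compute $J_t$ and $\Delta J$ directly, use the time-differentiated equation $u_{tt}-\Delta u_t = pf(1-u)^{-p-1}u_t$ together with $u_t-\Delta u = f(1-u)^{-p}$, substitute $u_t = J+\eps a h(u)$ to isolate the linear operator acting on $J$, and then obtain \eqref{basic computation 3} by completing the square in the cross term $2h'(u)\nabla a\cdot\nabla u$ against $ah''(u)|\nabla u|^2$ (the paper does exactly this, dividing by $ah''>0$). The final statement and the second part are handled correctly.

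One intermediate claim in your narrative is wrong as written, and it happens to sit at the only non-routine point of the lemma. You assert that the term $\eps a h'(u)(u_t-\Delta u)$, upon substituting $u_t-\Delta u=f(1-u)^{-p}$ and $h'(u)=p(1-u)^{-p-1}$, "produces" the term $\tau_1=\eps\, p\, a f (1-u)^{-p-1}$. Literally, that product equals $\eps\, p\, a f (1-u)^{-2p-1}$, a much more singular quantity. What actually happens is that this piece is cancelled against the piece of $fg'u_t$ coming from $u_t=J+\eps a h$, namely $\eps f a g' h$ with $g(u)=(1-u)^{-p}$, leaving
\begin{equation*}
\eps f a\bigl(g'h-h'g\bigr)=\eps f a\, g'(u)=\eps\, p\, a f (1-u)^{-p-1},
\end{equation*}
where the cancellation $g'h-h'g=g'$ is exactly the reason for the choice $h=g+1$ in \eqref{function h} (a generic $h$ would leave an uncontrolled $(1-u)^{-2p-1}$ term). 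Since you do perform the substitution $u_t=J+\eps ah$ and claim the correct final $\Theta$, this is a misattribution in the write-up rather than a fatal gap, but in a full proof you must exhibit this cancellation explicitly; without it the stated $\Theta$ in \eqref{basic computation 2} does not come out.
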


\begin{proof}
We compute
\begin{eqnarray*}
J_t &=& u_{tt} -\eps  a(x)h'(u)u_t, \\
\nabla J &=& \nabla u_t - \eps  \big( a(x)h'(u)\nabla u + h(u) \nabla a(x)\big), \\
\Delta J&=& \Delta u_t - \eps  \big( a(x)h'(u)\Delta u + a(x)h''(u) |\nabla u|^2 
  + 2h'(u)\nabla a(x)\cdot\nabla u + h(u) \Delta a(x) \big).
\end{eqnarray*}
Setting $g(u) = (1-u)^{-p}$ and omitting the variables $x,u$ without risk of confusion, we get
\begin{eqnarray*}
J_t - \Delta J &=& (u_t-\Delta u)_t - \eps  ah'(u_t- \Delta u) 
 +\eps  (ah''  |\nabla u|^2  |\nabla u|^2 + 2h'\nabla a\cdot\nabla u+ \Delta a) \\
               &=& fg'u_t - \eps  fah'g + \eps (ah''  |\nabla u|^2 + 2h'\nabla a\cdot\nabla u+ \Delta a).
\end{eqnarray*}
Using $u_t=J+\eps  ah$, we have
$$J_t- \Delta J - fg'J = \eps \Theta,$$
where
$$\Theta=fa(g'h-h'g)+ah''  |\nabla u|^2 + 2h'(u)\nabla a\cdot\nabla u+ \Delta a.$$
On the other hand, we have
\begin{equation}\label{hprimeu}
 h'(u)=p(1-u)^{-p-1},
\end{equation}
hence
\begin{eqnarray*}
g'h - h'g &=& p(1-u)^{-p-1} [(1-u)^{-p} + 1] - p (1-u)^{-2p-1} \\
          &=& p(1-u)^{-p-1},   
\end{eqnarray*}
which yields (\ref{basic computation 2}). Also, we have
\begin{equation}\label{hprimeprimeu}
h'' = [p(p+1) (1-u)^{-p-2}  > 0.
\end{equation}
Finally, since $a>0$, we may write
$$\Theta = p a(x) f(x) (1-u)^{-p-1} + h \Delta a + ah'' \left[ |\nabla u|^2 + 2\dfrac{h'(u)\nabla a\cdot\nabla u}{ah''}\right].$$
Since $|\nabla u|^2 + 2\dfrac{h'(u)\nabla a\cdot\nabla u}{ah''}\geq -\dfrac{h'^2 |\nabla a|^2}{a^2(h'')^2}$, inequality (\ref{basic computation 3}) follows.
\end{proof}

\subsection{Construction of the function $a(x)$.}

We shall apply Lemma \ref{basic computation}. 
In order to guarantee $\Theta\geq 0$, the negative term $\tau_3$ on the right-hand side of 
(\ref{basic computation 3}) must be absorbed by a positive contribution coming either from the term $\tau_1$, provided $f(x)>0$, or from the term $\tau_2$, provided $\Delta a(x)>0$. But $a(x)$ is positive and we require that it vanishes at the boundary,  
so we cannot have $\Delta a>0$ everywhere. 
Therefore, we shall consider a function $a(x)$ which is positive in $\Omega$ and suitably convex everywhere, 
except in a ball $B$ where $f$ is bounded away from zero.
A key point is here to obtain estimates of $a$ in terms of the radius of $B$, but independent of its location.

The following lemma gives the construction of the appropriate function $a(x)$.
In what follows we set
$$\Omega_r:=\{x\in \Omega;\ \delta(x)>r\},\qquad \omega_r:=\{x\in \Omega;\ \delta(x)<r\}.$$

\begin{lemma}\label{construction of a(x)}
Let 
\begin{equation}\label{function h 1}
h(u) = (1-u)^{-p} + 1.
\end{equation}
Let $r>0$, $y\in\Omega_{2r}$ and set $B=B_r(y)$. 
Then there exists a function $a\in C^2(\overline{\Omega})$ with the following properties:
\begin{equation}\label{construction of a 1 qual}
h h'' a\Delta a - h'^2|\nabla a|^2 \geq 0, \qquad \mbox{for all } x\in \overline{\Omega\setminus B}\mbox{ and all } 0\leq u<1,
\end{equation}
\begin{equation}\label{construction of a 2 qual}
C_1\delta^{p+1}(x)\leq a(x) \leq C_2\delta^{p+1}(x), \qquad \mbox{for all } x\in \overline{\Omega},
\end{equation}
\begin{equation}\label{construction of a 3 qual}
\|a\|_{C^2(\overline{\Omega})}\leq C_3,
\end{equation}
for some constants $C_1,C_2,C_3>0$ depending only on $p, \Omega, r$ (and not on $y$).
\end{lemma}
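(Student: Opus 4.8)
The plan is to build $a$ as a composition $a(x) = \Phi(\delta_\varepsilon(x))$ of a suitable scalar profile $\Phi$ with a smooth approximation $\delta_\varepsilon$ of the distance function $\delta$, modified near the ball $B$. First I would record the standard fact that, since $\partial\Omega$ is smooth, $\delta$ is $C^2$ in a boundary strip $\omega_{2d_0}$ for some $d_0>0$ depending only on $\Omega$, with $|\nabla\delta|=1$ there; away from this strip one replaces $\delta$ by a fixed smooth positive function, with all derivatives controlled by $\Omega$ alone. The natural choice of profile is $\Phi(s)\sim s^{p+1}$ for small $s$ (to get \eqref{construction of a 2 qual}), flattened to a constant for $s$ large, so that \eqref{construction of a 3 qual} holds automatically with $C_3=C_3(p,\Omega)$ once we also fix the behaviour near the ball.

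The core of the argument is the differential inequality \eqref{construction of a 1 qual}. Plugging in $h(u)=(1-u)^{-p}+1$, $h'=p(1-u)^{-p-1}$, $h''=p(p+1)(1-u)^{-p-2}$, the quantity $hh''a\Delta a - h'^2|\nabla a|^2$ becomes, after dividing by $p(1-u)^{-2p-2}$ and writing $t=(1-u)^p\in(0,1]$,
\[
(p+1)(1+t)\,a\,\Delta a - p\,|\nabla a|^2 .
\]
Since $1+t\ge 1$, it suffices to ensure $(p+1)\,a\,\Delta a \ge p\,|\nabla a|^2$ on $\overline{\Omega\setminus B}$, i.e. an inequality \emph{independent of $u$}. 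With $a=\Phi(\delta_\varepsilon)$ and $|\nabla\delta_\varepsilon|\le 1$ (equal to $1$ in the boundary strip, and $\Delta\delta_\varepsilon$ bounded), one computes $\nabla a=\Phi'\nabla\delta_\varepsilon$ and $\Delta a=\Phi''|\nabla\delta_\varepsilon|^2+\Phi'\Delta\delta_\varepsilon$, so the requirement reduces to a one–variable inequality of the form $(p+1)\Phi\bigl(\Phi''+\Phi'\Delta\delta_\varepsilon/|\nabla\delta_\varepsilon|^2\bigr)\ge p(\Phi')^2$ wherever $\Phi'\ge 0$. For $\Phi(s)=c s^{p+1}$ one has $(p+1)\Phi\Phi''=(p+1)^2 p\, c^2 s^{2p}$ and $p(\Phi')^2=p(p+1)^2c^2 s^{2p}$, so the leading terms \emph{exactly balance with equality}, and the $\Phi'\Delta\delta_\varepsilon$ correction, which is $O(s^{2p+1})$ relative to the main terms, is harmless for $s$ small; this is the reason the exponent $p+1$ is forced. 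One then interpolates: keep $\Phi(s)=c s^{p+1}$ for $s\le d_1$, transition on $[d_1,2d_1]$ to a constant, choosing the constant so large and the transition so that $\Phi$ stays increasing and concave-enough — here $a$ is large and one can afford $\Delta a$ to be slightly negative since the term $p|\nabla a|^2$ is then comparatively small — and finally in the interior region $\{\delta\ge 2d_1\}\setminus B$ take $\delta_\varepsilon$ to be a fixed large smooth function so that $a$ is essentially constant with tiny gradient, making \eqref{construction of a 1 qual} trivial. The one region where we do \emph{not} need \eqref{construction of a 1 qual} is $B$ itself, and since $B=B_r(y)\subset\Omega_{2r}\setminus$(boundary strip) provided $r\le d_0/2$ (reduce $r$ otherwise, updating constants), $a$ there is just the fixed interior constant, so nothing special is needed — this is exactly how the dependence on $r$ but not on $y$ enters.

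The main obstacle I expect is uniformity of the constants in the location $y$ of $B$ while $r$ varies: one must check that the boundary strip width $d_0$, the transition point $d_1$, and the interior construction can all be chosen depending only on $p,\Omega,r$, and that the ball $B$ always sits inside the ``fixed interior'' region where $a$ is constant. This is handled by the hypothesis $y\in\Omega_{2r}$ together with first shrinking $r$ to $\min(r,d_0/4)$ (legitimate since making $r$ smaller only strengthens \eqref{condMB}), so that $\overline B$ lies in $\{d_0/2<\delta\}$ say; then $d_1$ is chosen as a fixed fraction of $d_0$, and all of $\Phi$'s data on $[0,2d_1]$ and the interior value are absolute functions of $p$ and $\Omega$. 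The bounds \eqref{construction of a 2 qual} and \eqref{construction of a 3 qual} are then immediate from $\Phi(s)\asymp s^{p+1}$ near $0$, $\Phi$ bounded with bounded derivatives globally, and $\delta_\varepsilon\asymp\delta$ near $\partial\Omega$ with $\|\delta_\varepsilon\|_{C^2}$ controlled — the only slightly delicate point being the lower bound $a\ge C_1\delta^{p+1}$ in the interior, which holds because there $a$ equals a positive constant while $\delta^{p+1}$ is bounded above on $\overline\Omega$.
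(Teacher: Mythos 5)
Your reduction of \eqref{construction of a 1 qual} is correct: with $h=(1-u)^{-p}+1$ one gets $hh''=p(p+1)\bigl[(1-u)^{-2p-2}+(1-u)^{-p-2}\bigr]$ and $h'^2=p^2(1-u)^{-2p-2}$, so the inequality is equivalent to $(p+1)(1+t)\,a\Delta a\ge p|\nabla a|^2$ with $t=(1-u)^p\in(0,1]$, and you correctly identify $s^{p+1}$ as the critical profile. But there is a genuine gap in the transition region, and it is fatal to the construction $a=\Phi(\delta_\varepsilon)$. Since $a>0$, $hh''>0$ and $h'^2|\nabla a|^2\ge 0$, the inequality \eqref{construction of a 1 qual} \emph{forces} $\Delta a\ge 0$ at every point of $\overline{\Omega\setminus B}$: if $\Delta a<0$ somewhere there, the first term is strictly negative and the second is nonpositive, so the sum cannot be $\ge 0$ no matter how small $|\nabla a|$ is. Your claim that on the plateau transition ``one can afford $\Delta a$ to be slightly negative since the term $p|\nabla a|^2$ is then comparatively small'' misreads the structure of the inequality — there is no positive term available to absorb a negative $a\Delta a$ (in the application, the positive contribution $\tau_1=p a f(1-u)^{-p-1}$ is only available on $B$, where $f\ge r$; on $\Omega\setminus B$ one may have $f=0$). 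And the transition is unavoidable with your ansatz: a $C^2$ profile $\Phi$ with $\Phi(s)=cs^{p+1}$ for small $s$ and $\Phi\equiv\mathrm{const}$ for $s\ge 2d_1$ must have $\Phi''<0$ on part of $[d_1,2d_1]$ (since $\Phi'$ drops from $c(p+1)d_1^p>0$ to $0$), and the correction $\Phi'\Delta\delta_\varepsilon$ cannot rescue this — e.g.\ for $\Omega$ a ball, $\Delta\delta\le 0$, so $\Delta a<0$ on an annulus contained in $\Omega\setminus B$ and \eqref{construction of a 1 qual} fails there.

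This is exactly the obstruction the paper flags (``$a$ is positive and vanishes on the boundary, so we cannot have $\Delta a>0$ everywhere''), and the resolution requires the concave ``cap'' of $a$ to be hidden \emph{inside} $B$, where \eqref{construction of a 1 qual} is not demanded — not in a boundary-parallel collar. Since $B=B_r(y)$ is a small ball at an arbitrary location $y\in\Omega_{2r}$, level sets of $\delta$ are not adapted to it; the paper instead takes $a=\phi_y^{p+1}$ with $\phi_y$ harmonic in $\Omega\setminus B$, $\phi_y=0$ on $\partial\Omega$, $\phi_y=1$ on $\partial B$, for which $a\Delta a=\frac{p}{p+1}|\nabla a|^2$ holds \emph{exactly} on all of $\Omega\setminus B$ (so $\Delta a\ge0$ there with equality in the critical inequality), and then extends $a$ into $B$ by a cutoff. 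The price is that the lower bound $a\ge C_1\delta^{p+1}$ with $C_1$ independent of $y$ is no longer automatic and is obtained via uniform elliptic estimates and a compactness/Hopf-lemma argument. To repair your proof you would have to replace the composition with $\delta$ by a construction adapted to the position of $B$ (essentially rediscovering the harmonic-measure function $\phi_y$) and then address the uniformity in $y$, which is the real technical content of the lemma.
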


\begin{proof}
{\bf Step 1.} {\it Construction of $a(x)$ in $\Omega\setminus B$ and proof of (\ref{construction of a 1 qual}).}
We introduce a suitable harmonic function $\phi=\phi_y$, the unique smooth solution of the problem
\begin{equation}\label{elliptic problem}
\left.\begin{array}{ll}
\Delta \phi = 0, & x \in \Omega\setminus B, \\
\noalign{\vskip 1mm}
\phi = 0,        & x \in \partial \Omega, \\
\noalign{\vskip 1mm}
\phi =1,         & x \in \partial B.
\end{array}\right\rbrace
\end{equation}
The function $\phi$ is smooth, and by the strong maximum principle, we have $0<\phi <1$ in $\Omega\setminus B$.
Now, we set
\begin{equation}\label{defaphi}
a(x) = \phi^{p+1} (x), \qquad x\in \overline{\Omega\setminus B}
\end{equation}
and we compute 
\begin{equation*}
\nabla a = (p+1)\phi^p \nabla\phi, \qquad
\Delta a = (p+1)p\phi^{p-1} |\nabla \phi |^2 + \underbrace{(p+1)\phi^p\Delta\phi}_{=0}
\end{equation*}
in $\Omega\setminus B$.
It follows that
$$a\Delta a = (p+1)p \phi^{p+1}\phi^{p-1} |\nabla \phi |^2= \dfrac{p}{p+1} |\nabla a|^2, \qquad \mbox{in } \Omega \setminus B.$$
Since, on the other hand, we have
$$hh''  =  p(p+1)(1-u)^{-2p-2} + p(p+1) (1-u)^{-p-2} \geq \dfrac{p+1}{p} (h')^2,\qquad 0<u<1,$$
due to (\ref{function h 1}), property (\ref{construction of a 1 qual}) follows.

{\bf Step 2.} {\it Uniform estimates in $\Omega\setminus B$.}
We shall prove that 
\begin{equation}\label{construction of a 2b}
a(x) \ge C_1\delta^{p+1}(x) \quad \mbox{for all $x\in \overline\Omega\setminus B$},
\end{equation}
and
\begin{equation}\label{construction of a 3b}
\|a\|_{C^2(\overline{\Omega}\setminus B)}\leq C_3,
\end{equation}
for some constants $C_1, C_3>0$ depending only on $p, \Omega, r$.

For each $y\in \overline\Omega_{2r}$, the function $\phi_y(y+\cdot)$ is harmonic 
in $\{r<|z|< 2r\}$ with $\phi_y(y+\cdot)=1$ on $\{|z|=r\}$.
Consequently, by elliptic regularity, there exists a constant $C=C(n,r)>0$ such that
\begin{equation}\label{uniformHopf0}
\|\phi_y\|_{C^2(\{r\le |x-y|\le 3r/2\})}\le C,\quad\hbox{ for all $y\in \overline\Omega_{2r}$}.
\end{equation}
Since $\phi_y =1$ on $\partial B_r(y)$,
we deduce that there exists $\sigma=\sigma(n,r)\in (0,r/6 )$ such that
\begin{equation}\label{uniformHopf1}
\phi_y\ge 1/2 \quad\hbox{ in $\{r\le |x-y|\le r+3 \sigma\}$,\quad for all $y\in \overline\Omega_{2r}$}.
\end{equation}

Next we claim that there exists $c>0$ such that
\begin{equation}\label{uniformHopf2}
-\dfrac{\partial \phi_y}{\partial\nu}\ge c \quad\hbox{ on $\partial\Omega$,\quad for all $y\in \overline\Omega_{2r}$}.
\end{equation}
Assume for contradiction that there exist sequences $y_i\in \overline\Omega_{2r}$ and $x_i\in \partial\Omega$ such that
\begin{equation}\label{uniformHopf2b}
\dfrac{\partial \phi_{y_i}}{\partial\nu}(x_i)\to 0.
\end{equation}
We may assume $y_i\to y_0\in \overline\Omega_{2r}$ and $x_i\to x_0\in \partial\Omega$.
Set 
$$d=\delta(y_0)-r-2 \sigma\ge r-2 \sigma>0.$$
 For all large $i$, we have $\delta(y_i)>\delta(y_0)-\sigma$, 
hence $\delta(y_i)-r>d+\sigma$, so that $\phi_{y_i}$ is harmonic in $\omega_{d+\sigma}\subset\Omega\setminus B_r(y_i)$ 
with $\phi_{y_i}=0$ on $\partial\Omega$.
Applying elliptic regularity again, it follows that there exist $\alpha\in (0,1)$ and $C>0$ such that
\begin{equation}\label{uniformHopf3}
\|\phi_{y_i}\|_{C^{2+\alpha}(\overline \omega_{d})}\le C,\quad\hbox{ for all large $i$}.
\end{equation}
Up to extracting a subsequence, if follows that
\begin{equation}\label{uniformHopf4}
\phi_{y_i}\to \phi\quad\hbox{ in $C^2(\overline \omega_d)$,}
\end{equation}
where $\phi\ge 0$ is harmonic in $\omega_d$ and satisfies $\phi=0$ on $\partial\Omega$.
Moreover, by (\ref{uniformHopf2b}) we have $\frac{\partial \phi}{\partial\nu}(x_0)=0$.
By Hopf's Lemma, we deduce that 
\begin{equation}\label{uniformHopf5}
\phi\equiv 0 \quad\hbox{ in $\omega_d$.}
\end{equation}
Now, for large $i$, we have $\delta(y_i)<\delta(y_0)+\sigma$, hence 
$\delta(y_i)-r-3\sigma<d$, so that
$$\{r\le |x-y_i|\le r+ 3\sigma\}\cap\overline \omega_d\neq\emptyset.$$
But (\ref{uniformHopf4}) and (\ref{uniformHopf5}) then yield a contradiction with (\ref{uniformHopf1}).
The claim (\ref{uniformHopf2}) follows.

Now, arguing as for (\ref{uniformHopf3}), we have
\begin{equation}\label{uniformHopf6}
\|\phi_{y}\|_{C^2(\overline \omega_{r/2})}\le C(\Omega,r),\quad\hbox{ for all $y\in \overline\Omega_{2r}$}.
\end{equation}
Combining this with (\ref{uniformHopf2}), we deduce that there exists $\eta\in (0,r/2 )$ such that
\begin{equation}\label{uniformHopf6b}
\phi_{y}(x)\ge \frac{c}{2}\delta(x),\quad\hbox{ for all $x\in \omega_\eta$ and all $y\in \overline\Omega_{2r}$}.
\end{equation}
Since $\phi_y$ now satisfies
$$
\left.\begin{array}{ll}
\Delta \phi_y = 0, & x \in \Omega_\eta\setminus B_r(y) \\
\noalign{\vskip 1mm}
\phi_y \ge \frac{c\eta}{2},        & x \in \partial \Omega_\eta, \\
\noalign{\vskip 1mm}
\phi_y =1,         & x \in \partial B_r(y),
\end{array}\right\rbrace
$$
we deduce from the maximum principle that $\phi\ge \frac{c\eta}{2}$ in $\Omega_\eta\setminus B_r(y)$.
This along with (\ref{uniformHopf6b}) guarantees (\ref{construction of a 2b}).

Finally, for $x\in \Omega_\eta\setminus B_{3r/2}(y)$, we observe that $\phi_y$ is harmonic in $B_\eps(x)$ with $\eps=\min(\eta,r/2)$
and $0\le \phi_y\le 1$. It follows from elliptic regularity that 
that there exists a constant $C>0$ such that
\begin{equation}\label{uniformHopf7}
\|\phi_y\|_{C^2(B_{\eps/2}(x))}\le C,\quad\hbox{ for all $x\in \Omega_\eta\setminus B_{3r/2}(y)$ and all $y\in \overline\Omega_{2r}$}.
\end{equation}
Property (\ref{construction of a 3b}) is then a consequence of (\ref{uniformHopf0}), (\ref{uniformHopf6}) and (\ref{uniformHopf7}).

\goodbreak

{\bf Step 3.} {\it Extension to $B$.}
Since $a \in C^2(\overline B_{2r}(y)\setminus B_{r}(y))$ and $a$ satisfies (\ref{construction of a 3b}),
by standard properties of extension operators, the function $a$ can be extended in $\overline B_r(y)$ 
to a function $\tilde a$ such that 
\begin{equation}\label{extensionprop1}
\|\tilde a\|_{C^2(\overline B_{2r}(y))}\leq C_3.
\end{equation}
On the other hand, since $a=1$ on $\partial B_{r}(y)$, there exists $r_1\in (0,r)$ depending only on $C_3$ such that
\begin{equation}\label{extensionprop2}
\tilde a(x)\ge 1/2\quad\hbox{ for $r_1\le |x-y|\le r$.}
\end{equation}
 Fix a cutoff function $\psi\in C^2([0,\infty))$ such that $0\le \psi\le 1$,
$\psi(s)=0$ for $s\in [0,r_1]$ and $\psi(s)=1$ for $s\in [(r+r_1)/2,r]$, and define $a$ in $B_{r}(y)$ by
$$a(x):=1+(\tilde a(x)-1)\psi(|x-y|).$$ 
We thus obtain a function which satisfies $\|a\|_{C^2(\overline B_{r}(y))}\leq C_4(p,\Omega,r)$ 
and $a(x)\ge 1/2$ in $\overline B_r(y)$, 
owing to (\ref{extensionprop1}) and (\ref{extensionprop2}).
This, along with (\ref{construction of a 2b}) and (\ref{construction of a 3b}), guarantees (\ref{construction of a 3 qual})
 and the lower estimate in (\ref{construction of a 2 qual}).
Finally, the upper estimate in (\ref{construction of a 2 qual}) follows from (\ref{construction of a 3 qual}),
\eqref{defaphi}, (\ref{uniformHopf6}) and $\phi=0$ on $\partial\Omega$.
\end{proof}

\subsection{Proof of Proposition~\ref{type I qualitative}}

We shall also use the following lower bound for~$u_t$. 

\begin{lemma}\label{low bound u_t}
Under assumption (\ref{condMB}), for a given $t_0\in(0,T)$, the solution $u$ of problem (\ref{quenching problem}) satisfies
$$u_t (t,x) \geq c_0 e^{-c_1t}\delta(x), \qquad \mbox{for all } t\in \left[t_0,T\right)\mbox{ and } x\in\Omega,$$
with $c_0=c_0(\Omega,r,t_0)>0$ and $c_1=c_1(\Omega)>0$.
\end{lemma}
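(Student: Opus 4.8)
The plan is to apply the maximum principle to the function $J=u_t$ itself, which satisfies a linear parabolic equation obtained by differentiating \eqref{quenching problem} in $t$. Since $f\ge 0$ and $(1-u)^{-p}>0$, we have $u_t=f(x)(1-u)^{-p}\ge 0$, so $u$ is nondecreasing in time; moreover differentiating the PDE gives
\begin{equation*}
(u_t)_t-\Delta(u_t)=pf(x)(1-u)^{-p-1}u_t\ge 0\quad\hbox{in }(0,T)\times\Omega,
\end{equation*}
so $v:=u_t$ is a nonnegative supersolution of the linear heat equation with the zero-order coefficient $c(t,x):=pf(x)(1-u)^{-p-1}\ge 0$ (and $v=0$ on the lateral boundary). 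The strategy is then to compare $v$ from below, on the time interval $[t_0,T)$, with an explicit subsolution of the form $\underline v(t,x)=c_0 e^{-c_1 t}\varphi(x)$, where $\varphi$ is a fixed positive function comparable to $\delta(x)$ and vanishing on $\partial\Omega$ — for instance the first Dirichlet eigenfunction of $-\Delta$ in $\Omega$, normalized so that $0\le \varphi\le\delta$, with eigenvalue $\lambda_1(\Omega)$.

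First I would fix $t_0\in(0,T)$ and establish a strictly positive lower bound for $u_t$ at the initial time $t_0$: by the strong maximum principle and Hopf's lemma applied to the equation for $u_t$ on $(0,t_0]\times\Omega$ (using that $u_t\not\equiv 0$, since $u_t=f(1-u)^{-p}$ is positive wherever $f>0$, in particular on the ball $B$ from \eqref{condMB}), there is $\kappa=\kappa(\Omega,r,t_0)>0$ with $u_t(t_0,x)\ge\kappa\,\delta(x)\ge\kappa\,\varphi(x)$ for all $x\in\Omega$. Here one uses the uniform positivity afforded by $f\ge r\chi_B$ together with the interior/boundary parabolic estimates; the normalization $\varphi\le\delta$ takes care of the comparison at $t=t_0$. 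Next, choose $c_1:=\lambda_1(\Omega)$ and $c_0:=\kappa e^{c_1 t_0}$, so that $\underline v(t_0,x)=c_0e^{-c_1t_0}\varphi(x)=\kappa\varphi(x)\le u_t(t_0,x)$. On the lateral boundary $\underline v=0=u_t$, and
\begin{equation*}
\underline v_t-\Delta\underline v-c(t,x)\underline v
= c_0e^{-c_1t}\bigl(-c_1\varphi+\lambda_1\varphi\bigr)-c(t,x)\underline v
= -c(t,x)\underline v\le 0,
\end{equation*}
since $c_1=\lambda_1$ and $c\ge 0$, so $\underline v$ is a subsolution of the same linear equation as $v$. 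The comparison principle on $[t_0,T)\times\Omega$ (valid because $c$ is bounded on each $[t_0,T']\subset[t_0,T)$, and the inequality then extends to $T$ by passing to the limit $T'\uparrow T$) yields $u_t(t,x)=v(t,x)\ge\underline v(t,x)=c_0e^{-c_1t}\varphi(x)$; recording that $\varphi$ can be chosen comparable to $\delta$ from below (replacing $\varphi$ by $c\varphi$ with $c=c(\Omega)>0$ absorbed into $c_0$) gives the claimed bound $u_t(t,x)\ge c_0e^{-c_1t}\delta(x)$ with $c_0=c_0(\Omega,r,t_0)$ and $c_1=c_1(\Omega)$.

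The main obstacle is making the initial lower bound $u_t(t_0,\cdot)\ge\kappa\delta$ quantitatively depend only on $\Omega,r,t_0$ and not on the full solution $u$: one must extract, from $f\ge r\chi_B$ alone, a lower bound on $u(t,\cdot)$ on a fixed sub-domain for $t$ up to $t_0$, then feed it into Hopf's lemma uniformly. This can be done by comparison with the solution of the linear problem $w_t-\Delta w=r\chi_B$, $w|_{\partial\Omega}=0$, $w(0)=0$ (since $(1-u)^{-p}\ge 1$), which gives $u(t,x)\ge w(t,x)$ with $w$ independent of $u$; then $u_t=f(1-u)^{-p}\ge r\chi_B$ pointwise, and one compares $u_t$ on $(0,t_0]$ from below with the solution of the same linear heat problem with source $r\chi_B$ started at a suitable time, obtaining $u_t(t_0,x)\ge\kappa(\Omega,r,t_0)\delta(x)$ by the interior Harnack-type estimate and the uniform Hopf bound for that fixed linear problem. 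With that in hand the rest is the routine comparison argument sketched above.
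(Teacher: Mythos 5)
Your proposal is correct in substance and shares its first, decisive step with the paper: differentiate the equation in $t$, observe that $v=u_t$ is a nonnegative solution of a linear parabolic equation with nonnegative zero-order coefficient, vanishing on the lateral boundary, with $v(0,\cdot)=f\ge r\chi_B$, and conclude $u_t\ge e^{t\Delta_\Omega}f\ge r\,e^{t\Delta_\Omega}\chi_B$ by the maximum principle. Where you diverge is in how the linear heat flow of $\chi_B$ is bounded below by $c\,e^{-c_1t}\delta(x)$: the paper simply invokes the known two-sided Dirichlet heat kernel estimate $G_\Omega(t,x,y)\ge c\,e^{-c_1t}\delta(x)\delta(y)$ for $t\ge t_0$ (Davies, Zhang) and integrates over $y\in B$, whereas you first establish $u_t(t_0,\cdot)\ge\kappa\delta$ by Hopf's lemma for a fixed linear problem and then propagate forward with the subsolution $c_0e^{-\lambda_1 t}\varphi_1$. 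Your route is more self-contained (no heat-kernel citation, and the eigenfunction subsolution step is clean and correct since the zero-order coefficient is nonnegative), at the price of having to prove the $t=t_0$ bound by hand; the paper's route gets the whole time interval $[t_0,T)$ in one stroke.

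Two points need attention. First, the constant $\kappa$ in your initial bound must depend only on $\Omega,r,t_0$ and \emph{not on the location of the ball $B$} (assumption \eqref{condMB} fixes only the radius). Your ``fixed linear problem'' with source or data $r\chi_B$ still depends on the center of $B$, so the Hopf constant you extract is a priori center-dependent; you would need either a compactness argument in the center (as the paper does for the harmonic functions $\phi_y$ in Lemma \ref{construction of a(x)}) or, more simply, the heat-kernel lower bound itself, which is uniform in $y$. Second, the assertion ``$u_t=f(1-u)^{-p}\ge r\chi_B$ pointwise'' is false for $t>0$ (the PDE gives $u_t-\Delta u=f(1-u)^{-p}$); what you actually need, and what makes the argument work, is only $u_t(0,\cdot)=f\ge r\chi_B$ as initial data for the comparison, together with $(u_t)_t-\Delta(u_t)\ge0$. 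The auxiliary comparison $u\ge w$ with $w_t-\Delta w=r\chi_B$ is superfluous. With these repairs the proof goes through.
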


\begin{proof}
Let $x_0\in \Omega$ be such that $B=B(x_0,r)$. First, we observe that the function $v=u_t$
is a (classical) solution of the problem:
\begin{equation}\label{eqnut}
\left\lbrace\begin{array}{ll}
v_t - \Delta v = pf(x) (1-u)^{-p-1}v, & \mbox{ in } (0,T)\times \Omega, \\
v=0, & \mbox{ in } [0,T)\times \partial\Omega, \\
v(0,x)=f(x), &\mbox{ in } \Omega.
\end{array}\right.
\end{equation}
By the maximum principle, we thus have 
\begin{equation}\label{utsemigroup}
u_t\geq e^{t\Delta_\Omega}f\quad\hbox{ in $[0,T)\times \Omega$.}
\end{equation}
By (\ref{condMB}), we deduce that 
$$u_t\ge r\int_\Omega G_\Omega(t,x,y)\chi_B(y)\, dy.$$ 
Here, $e^{t\Delta_\Omega}$ and $G_\Omega$ are respectively the Dirichlet heat semigroup and heat kernel of $\Omega$.
It is known (see \cite{Dav} and also \cite{Zh}) that 
$$G_\Omega(t,x,y)\ge ce^{-c_1t}\delta(x)\delta(y),\quad t\ge t_0$$
with $c=c(t_0,\Omega)>0$ and $c_1=c_1(\Omega)>0$.
Consequently, since $\delta(x_0)\ge r$, we have
$$u_t\ge cre^{-c_1t}\delta(x)\int_{B(x_0,r/2)}\delta(y)\, dy\ge \frac{cr^2}{2}|B(0,r/2)|e^{-c_1t}\delta(x)$$
and the lemma follows.
\end{proof}

\begin{proof}[Proof of Proposition~\ref{type I qualitative}]
It is done in three steps.

\textit{Step 1: Preparations.} 
Let $J$ and $h$ be given by (\ref{function J}) and (\ref{function h 1}).
Owing to assumption (\ref{condMB}), upon replacing $r$ by $r/2$, we may assume that there exists 
$y\in \Omega$ such that $\delta(y)\ge 2r$ and 
\begin{equation}\label{f-lowerB}
f\ge r\quad\hbox{ on $B:=B_r(y)$.}
\end{equation}
Consider the function $a\in C^2(\overline\Omega)$ given by Lemma \ref{construction of a(x)}.
By (\ref{construction of a 2 qual}), we have
\begin{equation}\label{sigma}
\inf_{x\in B} a(x)\ge \sigma=\sigma(\Omega,p,r) := C_1r^{p+1}.
\end{equation}
Next, let $t_0 = \dfrac{1}{2(p+1)M}$, where $M$ is given by (\ref{condMB}). 
By Lemma \ref{lower estimate for T} we have $0<t_0<T$ and
\begin{equation}\label{boundut0}
 \|u(t,\cdot)\|_\infty \leq 1-2^{-1/(p+1)},\ \quad 0\le t\le t_0.
\end{equation}
We split the cylinder $\Sigma := \left( t_0,T\right) \times \Omega$ into three subregions as follows:
\begin{equation}\label{defSigma123}
\begin{array}{ll}
 \Sigma_1 &= (t_0,T) \times [\Omega \setminus B], \\
  \noalign{\vskip 1mm}
 \Sigma_2^\eta &= \left\lbrace (t,x)\in \left( t_0,T\right) \times B; \quad u(t,x)\geq 1-\eta \right\rbrace, \\
  \noalign{\vskip 1mm}
 \Sigma_3^\eta &= \left\lbrace (t,x)\in \left( t_0,T\right) \times B; \quad u(t,x)< 1-\eta \right\rbrace,
\end{array}
\end{equation}
where $\eta\in (0,1)$ will be specified later. 

\textit{Step 2: Parabolic inequality for $J$ in the regions $\Sigma_1$ and $\Sigma_2^\eta$.}
It follows from properties (\ref{basic computation 3}) in Lemma \ref{basic computation} and (\ref{construction of a 1 qual}) 
in Lemma \ref{construction of a(x)}, along with $a>0$, $f\geq 0$ in $\Omega$, and $h''>0$, that 
\begin{equation}\label{parab ineq J 1}
 J_t - \Delta J - pf(x)(1-u)^{-p-1} J \geq 0 \quad \mbox{ in } \Sigma_1.
\end{equation}
Next, in view of (\ref{function h 1}) and property (\ref{construction of a 3 qual}) in Lemma \ref{construction of a(x)}, we have 
$$|h\Delta a| \leq C_4 (1-u)^{-p}, \quad |h' \nabla a|\leq C_4 (1-u)^{-p-1}\quad \mbox{ in } \Sigma,$$
for some $C_4=C_4(\Omega,p,r)>0$. Also, from (\ref{function h 1}) and (\ref{sigma}) we get
$$ah'' \geq \sigma p(p+1) (1-u)^{-p-2}\quad \mbox{ in } (0,T) \times B.$$
Consequently, recalling the definition (\ref{basic computation 2}) of $\Theta$, it follows from 
(\ref{basic computation 3}, (\ref{f-lowerB}), (\ref{sigma}) that
\begin{eqnarray*}
 (1-u)^{p+1} \Theta &\geq & pf(x)a(x) +h\Delta a (1-u)^{p+1} - \dfrac{(h'|\nabla a|)^2}{ah''} (1-u)^{p+1} \\
 &\geq & pr\sigma - C_5(1-u) \geq pr\sigma - C_5 \eta \quad \mbox{ in } \Sigma_2^\eta,
\end{eqnarray*}
for some $C_5=C_5(\Omega,p,r)>0$. 
Choosing $\eta=\eta(\Omega,p,r)\in (0,1)$ small enough, 
we then deduce from~(\ref{basic computation 1}) that
\begin{equation}\label{parab ineq J 2}
 J_t - \Delta J - pf(x) (1-u)^{-p-1}J \geq 0 \quad \mbox{ in } \Sigma_2^\eta.
\end{equation}

\textit{Step 3: Control of $J$ on $\Sigma_3^\eta$ and conclusion.}
Now that $\eta$ has been fixed, using Lemma \ref{low bound u_t}
and (\ref{condMB}), (\ref{function h 1}),  (\ref{construction of a 2 qual}), (\ref{boundut0}),
we may choose $\eps=\eps(\Omega,p,r,M)>0$ small enough, such that
\begin{equation}\label{J ge 0 in S3}
 J\geq \delta(x) \left[c_0e^{-c_1M} - 2C_2\varepsilon \delta^p(x) (1-u)^{-p} \right] 
 \geq \delta(x) \left[c_0e^{-c_1M} - 2C_2 \varepsilon \delta^p(x) \eta^{-p}\right] \geq 0
 \quad \mbox{ in } \Sigma_3^\eta
\end{equation}
and
\begin{equation}\label{Jge 0 in t0}
\begin{array}{ll}
  J(t_0,x)&\geq \delta (x) \left[c_0e^{-c_1M} - 2C_2\varepsilon \delta^p(x) (1-\|u(t_0,\cdot)\|_\infty)^{-p} \right] \\
 \noalign{\vskip 1mm}
 &\geq \delta (x) \left[c_0e^{-c_1M} - 2^{1+\frac{p}{p+1}} C_2\varepsilon \delta^p(x) \right] \geq 0
\quad \mbox{ in } \overline{\Omega},
  \end{array}
\end{equation}
where $c_0,c_1$ are the constants in Lemma \ref{low bound u_t} and $C_2$ is the constant in (\ref{construction of a 2 qual}). 
Observe now that, as a consequence of (\ref{J ge 0 in S3}) and $\Sigma = \Sigma_1 \cup \Sigma_2^\eta \cup \Sigma_3^\eta$, we have
\begin{equation}\label{negative set J}
 \lbrace (t,x)\in\Sigma ; \ J(t,x) < 0\rbrace \subset \Sigma_1\cup \Sigma_2^\eta.
\end{equation}
Also, since $a=0$ on $\partial \Omega$, we have
\begin{equation}\label{boundary cond}
 J=0 \quad \mbox{on } (t_0,T) \times \partial \Omega.
\end{equation}
On the other hand, by standard parabolic regularity, we have
$$J\in C^{1,2}(\Sigma)\cap C([t_0,T)\times \overline{\Omega}).$$
It follows from (\ref{parab ineq J 1}), (\ref{parab ineq J 2}), (\ref{Jge 0 in t0})-(\ref{boundary cond}), and the maximum principle (see, e.g., \cite{QS}, Proposition 52.4 and Remark 52.11(a)) that
$$J\geq 0 \quad \mbox{ in } \Sigma.$$

Then, for $t_0<t<s<T$ and $x\in \Omega$, we have
$$u_t \geq \eps a(x) h(u) \geq \eps a(x)(1-u)^{-p}$$
and an integration in time gives
$$(1-u(t,x))^{p+1} \geq (p+1)\displaystyle\int_t^s u_t(1-u)^p \geq \eps a(x) (s-t).$$
Letting $s \to T$, we get
\begin{equation}\label{time integration}
(1-u(t,x))^{p+1} \geq (p+1) \eps a(x) (T-t) \quad \mbox{ in } \Sigma.
\end{equation}
In view of (\ref{construction of a 3 qual}), this implies (\ref{EstimTypeI}) in $[t_0,T)\times\Omega$
with $\gamma=\gamma(\Omega,p,r,M)>0$.
Due to (\ref{boundut0}), (\ref{condMB}), the estimate (\ref{EstimTypeI}) is true in $[0,t_0)\times\Omega$ as well,
for a possibly smaller constant $\gamma=\gamma(\Omega,p,r,M)>0$. 
\end{proof}


\section{Proof of Theorems \ref{local result dim n} and \ref{global result dim n}}

\subsection{No touchdown criterion under a local type I estimate.}

The following lemma enables one to exclude touchdown at a given interior point 
or on a neighborhood of $\partial\Omega$, under a suitable type~I estimate
and a smallness assumption on $f$.

\begin{lemma}\label{basic supersol}
Let $u$ be the solution of problem (\ref{quenching problem}). 
Let either 
$$D=B(x_0,b)\subset\subset\Omega \quad\hbox{ and }\quad \Gamma=\partial D,
\leqno(i)$$
or
$$D=\Omega\setminus\omega\ \hbox{ for some $\omega\subset\subset\Omega$}\quad\hbox{ and }\quad\Gamma=\partial\omega,
\leqno(ii)$$
or
$$\Omega=(-R,R),\quad D=(a,R)\ \hbox{ for some $a\in(-R,R)$} \quad\hbox{ and }\quad \Gamma=\{a\}.
\leqno(iii)$$
Assume 
\begin{equation}\label{hyp basic supersol}
u\le 1-k(T-t)^{\frac{1}{p+1}} \quad\hbox{ on $[0,T)\times \Gamma$}
\end{equation}
for some $k>0$. If
\begin{equation}\label{hypf basic supersol}
\|f\|_{L^\infty(D)}<\frac{k^{p+1}}{p+1},
\end{equation}
then $\mathcal{T} \cap  D=\emptyset$. In addition, in case (ii) we have $\mathcal{T}\cap \partial\Omega = \emptyset$, and in case (iii), $R\notin \mathcal{T}$.
\end{lemma}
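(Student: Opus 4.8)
The plan is to construct, in each of the three cases, an explicit supersolution of the quenching problem on the region $D$, which stays strictly below $1$ up to and including the quenching time $T$. Since the PDE is $u_t-\Delta u = f(x)(1-u)^{-p}$ with $f$ small on $D$, the idea is to look for a function of the separated form
\begin{equation*}
w(t,x) = 1 - k(T-t)^{\frac{1}{p+1}}\,\psi(x),
\end{equation*}
where $\psi$ is a suitable positive function on $\overline D$, bounded below by a positive constant, chosen so that $w$ is a supersolution. Computing, $w_t = \frac{k}{p+1}(T-t)^{-\frac{p}{p+1}}\psi$ and $-\Delta w = k(T-t)^{\frac{1}{p+1}}\Delta\psi$, while $(1-w)^{-p} = k^{-p}(T-t)^{-\frac{p}{p+1}}\psi^{-p}$. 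Hence $w_t - \Delta w - f(x)(1-w)^{-p} \ge 0$ will hold provided
\begin{equation*}
\frac{k}{p+1}\psi \ge \frac{\|f\|_{L^\infty(D)}}{k^{p}}\psi^{-p} - k(T-t)^{\frac{1}{p+1}+\frac{p}{p+1}}\Delta\psi,
\end{equation*}
i.e. (absorbing the last term) roughly $\frac{k^{p+1}}{p+1}\psi^{p+1} \ge \|f\|_{L^\infty(D)}$ modulo a correction from $\Delta\psi$. The cleanest choice is to take $\psi$ \emph{harmonic} in $D$ (so the $\Delta\psi$ term drops), equal to $1$ on the relevant inner boundary and to $0$ on $\partial\Omega\cap\partial D$ when present; then one needs $\psi$ bounded below by a constant $\theta>0$ on $D$ and the sharp requirement becomes $\|f\|_{L^\infty(D)} \le \frac{k^{p+1}}{p+1}\theta^{p+1}$. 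Since the hypothesis \eqref{hypf basic supersol} is stated without such a $\theta$, one should instead handle each geometry directly: in case (i), $D=B(x_0,b)$ is bounded away from $\partial\Omega$, take $\psi\equiv 1$ (then $\Delta\psi=0$) and $w$ is a supersolution exactly when $\|f\|_{L^\infty(D)}<\frac{k^{p+1}}{p+1}$; in case (iii), a one-dimensional interval, take $\psi\equiv 1$ on $(a,R)$ as well. Case (ii) is the one where $D=\Omega\setminus\omega$ touches $\partial\Omega$, and there one genuinely needs $\psi$ to vanish on $\partial\Omega$; but then $\psi$ is not bounded below, so a pure separated-variables supersolution with $\psi$ harmonic will not directly give the clean constant $\frac{k^{p+1}}{p+1}$.

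To deal with case (ii) I would instead use a comparison with the constant-coefficient problem: on $D=\Omega\setminus\omega$ we have $0\le f\le \|f\|_{L^\infty(D)}=:m$, with $m<\frac{k^{p+1}}{p+1}$, so it suffices to build a supersolution of $w_t-\Delta w = m(1-w)^{-p}$ on $D$ with $w=0$ on $\partial\Omega$ and $w\le 1-k(T-t)^{1/(p+1)}$ matched by \eqref{hyp basic supersol} on $\partial\omega$. The function $\bar y(t):= 1-[(k^{p+1}-(p+1)m)(T-t)]^{1/(p+1)}$ solves the ODE $\bar y' = m(1-\bar y)^{-p}$ with $\bar y(T)=1$ and satisfies $\bar y(t) = 1 - [(k^{p+1}-(p+1)m)(T-t)]^{1/(p+1)} < 1 - k(T-t)^{1/(p+1)}$ precisely because $k^{p+1}-(p+1)m < k^{p+1}$; moreover $\bar y$ is a spatially constant supersolution of the $m$-problem, hence of the $f$-problem, on all of $D$, it dominates $u$ on $\Gamma=\partial\omega$ by \eqref{hyp basic supersol}, it is $\ge 0$ on $\partial\Omega$, and $\bar y(0)\ge 0 = u(0,\cdot)$ provided $(k^{p+1}-(p+1)m)T\le 1$ — and this last inequality one gets for free, or after shrinking, since $u\le \bar y$ must hold somewhere forces a compatible normalization; if needed one notes $k$ may be decreased (weakening \eqref{hyp basic supersol}) without loss, or uses the elementary bound $T_*\le T$ from Lemma~\ref{lower estimate for T} together with $m<\|f\|_\infty$ won't suffice, so more carefully one compares on the time interval only after checking $\bar y(0)\ge 0$; in fact since $u\le \bar y$ on the parabolic boundary and $\bar y$ is a supersolution, the comparison principle gives $u\le \bar y$ on $[0,T)\times \overline D$. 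Letting $t\uparrow T$ shows $u(t,x)\le \bar y(t)\to 1$ but stays strictly below $1$ at the rate $(T-t)^{1/(p+1)}$ with a nonvanishing constant on $D$, so no point of $D$ can be a quenching point; in particular $\mathcal T\cap\partial\Omega=\emptyset$. Cases (i) and (iii) are handled by the same constant-in-space supersolution $\bar y$ on the bounded set $D$, comparing on $\Gamma$ via \eqref{hyp basic supersol}; the conclusion $\mathcal T\cap D=\emptyset$ and $R\notin\mathcal T$ in case (iii) follows identically.

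The one technical point to check carefully is the initial-time compatibility $\bar y(0)\ge u(0,\cdot)=0$, i.e. $(k^{p+1}-(p+1)m)T\le 1$: if it fails, replace $\bar y$ by $\max(0,\bar y)$ is not smooth, so instead one observes that it suffices to run the comparison on $[t_1,T)$ for $t_1>0$ chosen so that $\bar y(t_1)\ge 1-2^{-1/(p+1)}\ge \|u(t_1,\cdot)\|_\infty$, which is possible because near $t=T$ the right side tends to $1$; combined with $u\le \bar y$ on $\Gamma$ and the supersolution property, comparison on $[t_1,T)\times\overline D$ closes the argument, and quenching cannot occur on $D$ since it would have to occur before $t_1$ too, contradiction — more simply, one just notes $\mathcal T$ depends only on the behavior as $t\uparrow T$. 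I expect this initial-data bookkeeping, rather than the supersolution construction itself, to be the only place demanding care; the heart of the matter — the sharp threshold $\|f\|_{L^\infty(D)}<\frac{k^{p+1}}{p+1}$ arising from the ODE $\bar y'=m(1-\bar y)^{-p}$ — is elementary.
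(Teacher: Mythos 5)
Your construction cannot rule out touchdown, because every supersolution you propose tends to $1$ \emph{everywhere on $D$} as $t\uparrow T$. Both the separated form $w=1-k(T-t)^{1/(p+1)}\psi(x)$ (with $\psi$ bounded below) and the spatially constant $\bar y(t)$ satisfy $w(t,\cdot)\to 1$ as $t\to T$, so the comparison $u\le w$ only yields a type~I upper bound $u\le 1-c\,(T-t)^{1/(p+1)}$ on $D$. By the definition of a touchdown point ($u(t_n,x_n)\to 1$ along some sequence $t_n\uparrow T$), such a bound is perfectly compatible with touchdown --- indeed it is exactly the estimate that touchdown solutions do satisfy (Proposition~\ref{type I qualitative}). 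Your closing sentence, that $u$ ``stays strictly below $1$ at the rate $(T-t)^{1/(p+1)}$ \dots so no point of $D$ can be a quenching point,'' is therefore a non sequitur; to exclude touchdown on $D$ you need $u$ bounded away from $1$ \emph{uniformly in $t$} on compact subsets of $D$. There are also computational errors in the ODE step: the solution of $\bar y'=m(1-\bar y)^{-p}$ with $\bar y(T)=1$ is $\bar y=1-[(p+1)m(T-t)]^{1/(p+1)}$, not $1-[(k^{p+1}-(p+1)m)(T-t)]^{1/(p+1)}$; with your constant, $\bar y$ is a supersolution only when $k^{p+1}\ge 2(p+1)m$, which is stronger than \eqref{hypf basic supersol}; and the displayed comparison with $1-k(T-t)^{1/(p+1)}$ has the inequality reversed (one needs, and has, $\bar y\ge 1-k(T-t)^{1/(p+1)}$ on $\Gamma$ since $(p+1)m<k^{p+1}$).

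The paper's proof fixes precisely this defect by taking the \emph{multiplicative} ansatz $w(t,x)=y(t)\psi(x)$ with $y(t)=1-k(T-t)^{1/(p+1)}\le 1$ and $\psi$ chosen strictly below $1$ in the interior of $D$: $\psi=1-\sigma(1-|x-x_0|^2/b^2)$ in case (i), the harmonic function with $\psi=1$ on $\partial\omega$ and $\psi=1-\sigma$ on $\partial\Omega$ in case (ii), and an affine function in case (iii). Then $u\le w=y\psi\le\psi$, and $\psi\le 1-\varepsilon$ on compact subsets of $D$ (and near $\partial\Omega$ in case (ii)), which is what genuinely excludes touchdown there. The strict inequality in \eqref{hypf basic supersol} is used to leave a margin $\sigma>0$ that simultaneously absorbs the factor $\psi\ge 1-\sigma$ and the term $-y\Delta\psi$ in the supersolution inequality; note also that $y(0)\ge 0$ (hence $w\ge 0=u(0,\cdot)$) is automatic from \eqref{hyp basic supersol} evaluated at $t=0$, so the initial-time bookkeeping you worried about is not an issue. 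Your first paragraph actually comes close to the right structure (a spatial profile with an inner boundary value $1$ and interior values below $1$), but you would need to place $\psi$ multiplicatively against $y(t)$, not against the factor $(T-t)^{1/(p+1)}$, for the argument to deliver a uniform-in-time bound away from $1$.
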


\begin{remark}
Condition (\ref{hypf basic supersol}) is essentially optimal. Indeed, considering (\ref{quenching problem}) with $f(x)\equiv 1$
and leaving the boundary conditions apart, 
we see that the ODE solution $y(t)=1-[1-(p+1)t]^{1/(p+1)}$ satisfies (\ref{hyp basic supersol}) with $k=(p+1)^{\frac{1}{p+1}}$ and $T=1/(p+1)$,
so that one could not take a larger value of the constant in the RHS of (\ref{hypf basic supersol}).
\end{remark}

\begin{proof}
We use a simplification of a comparison argument from \cite{GS}
(where the comparison was done with a selfsimilar supersolution, instead of a separated variable supersolution).
We define the comparison function
$$w(t,x) := y(t) \psi(x)  \quad \text{ for } (t,x)\in 
[0,T)\times \overline{D},$$
where $y(t)$ is defined by
$$
y(t) =1-k(T-t)^{\frac{1}{p+1}}.
$$
Here, in case (i), $\psi$ is given by
$$\psi(x) := 1- \sigma \left( 1-\dfrac{|x-x_0|^2}{b^2}\right)$$
for $\sigma\in (0,1)$ to be chosen below and, in case (ii), $\psi$ is the solution of the problem
$$
\left\lbrace \begin{array}{rrl}
\Delta \psi=0, &&x\in D, \\
\psi = 1, &&x\in \partial \omega, \\
\psi = 1-\sigma, &&x\in \partial \Omega.
\end{array} \right.
$$ 
Observe that
$$1-\sigma < \psi(x)<1,\quad x\in D,$$
by the strong maximum principle.
In case (iii), similarly to (ii), we set $\psi(x)=1 - \sigma (x-a)/(R-a)$ for $x\in [a,R]$.
In particular, owing to \eqref{hypf basic supersol}, we note that in all cases,
\begin{equation}\label{signw}
w\ge 0  \quad \text{ in }  [0,T)\times \overline{D}.
\end{equation}

We compute, in $(0,T)\times D$:
\begin{eqnarray*}
 w_t - \Delta w - f(x) (1-u)^{-p} & =& y'(t) \psi (x) - y(t)\Delta \psi (x) - f(x) (1-y(t)\psi(t))^{-p} \\
                 &\ge & \frac{k}{p+1} (T-t)^{-\frac{p}{p+1}} \psi(x) - y(t)\Delta \psi(x) - f(x) (1-y(t))^{-p} \\
                 & = &  \left( \frac{k}{p+1}\psi(x) - f(x)k^{-p} \right) (T-t)^{-\frac{p}{p+1}} - y(t)\Delta \psi(x).
\end{eqnarray*}
Moreover, we have $\Delta \psi = \frac{2\sigma}{b^2}$ in $B(x_0, b)$ in case (i),
and $\Delta \psi =0$ in $D$ in cases (ii) and (iii).
In all cases,
using assumption \eqref{hypf basic supersol} and taking $\sigma>0$ small enough,
 it follows that
\begin{equation}\label{eqnwcomp}
w_t -\Delta w -f(x)(1-w)^{-p} \ge \left( \frac{k}{p+1}(1-\sigma) - f(x)k^{-p}\right) T^{-\frac{p}{p+1}} -  \frac{2\sigma}{b^2} \ge 0
\quad\hbox{in $[0,T)\times D$.}
\end{equation}

We next look at the comparison on the parabolic boundary of $[0,T)\times D$.
On the one hand, by~\eqref{signw}, we have
\begin{equation}\label{boundcondw1}
w(0,x) \ge 0= u(0,x) \quad\hbox{ in $\overline D$.}
\end{equation}
On the other hand, using $\psi= 1$ on $\Gamma$ and (\ref{hyp basic supersol}), we have
\begin{equation}\label{boundcondw2}
\begin{array}{lll}
w(t,x) 
= 1 - k (T-t)^{\frac{1}{p+1}} \geq u(t,x)
\quad\hbox{ in $[0,T)\times \Gamma$.}
\end{array}
\end{equation}
Moreover, in case (ii) (resp., (iii)), we have, by \eqref{signw},
\begin{equation}\label{boundcondw3}
w(t,x) \ge 0 = u(t,x) \quad\hbox{in $[0,T)\times \partial\Omega$ (resp., $[0,T)\times \{-R\}$).}
\end{equation}

By (\ref{eqnwcomp}), (\ref{boundcondw1}), (\ref{boundcondw2}) and (\ref{boundcondw3}) (in cases (ii) and (iii)),
along with the comparison principle and $y(t)\leq 1$ for all $t\in[0,T)$, we conclude that
\begin{equation}\label{usmallD}
u(t,x) \leq w(t,x)\leq \psi(x) \quad\hbox{ in $(0,T)\times D$.}
\end{equation}

 In all cases, since $\psi$ is uniformly smaller than $1$ in compact subsets of $D$,
it follows from \eqref{usmallD} that $\mathcal{T} \cap D=\emptyset$. We also see that in case (ii),  
$\psi$ is uniformly smaller than $1$ in a neighborhood of $\partial\Omega$, so we can rule out quenching at the boundary.
For the case (iii), the conclusion follows similarly.
\end{proof}


\subsection{Proof of Theorem \ref{local result dim n}.}
We shall apply case (i) of Lemma~\ref{basic supersol}. 
Let $\gamma$ be given by estimate (\ref{EstimTypeI}), and assume 
$$f(x_0)<\dfrac{ (\gamma \delta(x_0))^{p+1}}{p+1}.$$
Pick $k\in (0,\gamma \delta(x_0))$ such that 
$$f(x_0)<\dfrac{k^{p+1}}{p+1}<\dfrac{ (\gamma \delta(x_0))^{p+1}}{p+1}.$$
By estimate \eqref{EstimTypeI}, together with the continuity of $f$,
conditions \eqref{hyp basic supersol} and \eqref{hypf basic supersol} are satisfied in $D=B(x_0,b)$ for $b>0$ sufficiently small.
We can then conclude from Lemma~\ref{basic supersol} that $x_0\not\in \mathcal{T}$,
which proves Theorem \ref{local result dim n} with $\gamma_0=\frac{\gamma^{p+1}}{p+1}$.
\hfill $\square$

\subsection{Proof of Theorem \ref{global result dim n}.}
Let $\gamma$ be given by estimate (\ref{EstimTypeI}), and assume (\ref{hyp global result dim n}) with 
$\gamma_0:=\frac{\gamma^{p+1}}{p+1}$.
Applying case (ii) of Lemma~\ref{basic supersol} with $k=\gamma\, {\rm dist}(\omega,\partial\Omega)$,
it follows that $\mathcal{T}\subset\overline\omega$. 
Finally, we note that for any $x\in \partial \omega$, our assumption implies
$f(x)<\gamma_0 \delta^{p+1}(x)$, so that $x\not\in\mathcal{T}$ by Theorem \ref{local result dim n}.
Therefore $\mathcal{T}\subset\omega$ and the theorem is proved.
\hfill $\square$

\subsection{Proof of Corollary \ref{cor 1}.}
Assertion (i) follows from Theorem~\ref{local result dim n}.
Assertion (ii) follows by applying Theorems~\ref{local result dim n} and \ref{global result dim n},
and then Lemma~\ref{upper estimate for T}.
\hfill $\square$


\section{Stability results for the touchdown time and touchdown set} \label{stability results}

One of the main ingredients in the proofs of 
Theorems \ref{single point quenching}, \ref{theorem one well} and \ref{two comp. quench set}
 is the stability of the touchdown time and touchdown set under small perturbations of the potential $f$.

Recalling the definition in \eqref{hypf}, we denote by $U:E\ni f\mapsto U_f$ the semiflow generated by problem~(\ref{quenching problem}).
Namely, $u=U_f(t,\cdot)$ is the maximal classical solution of (\ref{quenching problem}).
We recall that its existence time and touchdown set are respectively denoted by $T_f\in (0,\infty]$ 
and $\mathcal{T}_f\subset\overline\Omega$. We start with a more or less standard continuous dependence
property of the solution itself with respect to $f$.

\begin{proposition}[Continuity of $U$ from $L^q$ to $L^\infty$]\label{Lq continuity}
Let $1\le q \le \infty$ with $q>\frac{n}{2}$. Let $f\in E$ and let $0<t_0<T_f$.
For all $\sigma>0$, there exists $\varepsilon>0$ such that 
$$\hbox{ if $g\in E$ and $\|g-f\|_q \leq \varepsilon$, then $T_g>t_0$ and 
$\displaystyle\sup_{t\in [0,t_0]} \|U_g-U_f\|_\infty\le \sigma$.}$$
\end{proposition}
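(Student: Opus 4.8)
The plan is to prove Proposition~\ref{Lq continuity} by comparing $u=U_f$ and $v=U_g$ on a fixed time interval $[0,t_0]\subset[0,T_f)$ using the Duhamel (variation of constants) formula together with a continuation/bootstrap argument. Since $0<t_0<T_f$, the solution $u$ is bounded away from $1$ on $[0,t_0]\times\overline\Omega$: set $m:=\sup_{[0,t_0]\times\overline\Omega}u<1$. The first step is to fix a slightly larger level $m'\in(m,1)$ and define, for $g$ close to $f$, the maximal time $t_*=t_*(g)\le t_0$ up to which $\|U_g(t,\cdot)\|_\infty\le m'$; by the local existence theory such $t_*>0$ exists and $v=U_g$ is a classical solution on $[0,t_*)$. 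The goal is to show that if $\|g-f\|_q$ is small enough then $t_*=t_0$ and $\sup_{[0,t_*]}\|v-u\|_\infty\le\sigma$; the desired conclusion (in particular $T_g>t_0$) then follows.

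Next I would write the integral equations $u(t)=\int_0^t e^{(t-s)\Delta_\Omega}\,f\,(1-u(s))^{-p}\,ds$ and similarly for $v$ with $g$, and subtract:
\begin{equation*}
u(t)-v(t)=\int_0^t e^{(t-s)\Delta_\Omega}\Bigl[f\bigl((1-u)^{-p}-(1-v)^{-p}\bigr)+(f-g)(1-v)^{-p}\Bigr]ds.
\end{equation*}
On $[0,t_*]$ we have $1-v\ge 1-m'>0$ and $1-u\ge 1-m>0$, so the nonlinearity $s\mapsto(1-s)^{-p}$ is Lipschitz on $[0,m']$ with some constant $L=L(p,m')$; hence the first bracket is bounded by $L\|f\|_\infty\|u(s)-v(s)\|_\infty$, while the second is bounded in $L^q$ by $(1-m')^{-p}\|f-g\|_q$. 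Applying the standard smoothing estimate $\|e^{\tau\Delta_\Omega}\phi\|_\infty\le C\tau^{-\frac{n}{2q}}\|\phi\|_q$ (valid and integrable in $\tau$ precisely because $q>n/2$), and using that $\|e^{\tau\Delta_\Omega}\phi\|_\infty\le\|\phi\|_\infty$ for the Lipschitz term, I obtain
\begin{equation*}
\|u(t)-v(t)\|_\infty\le C_1 t_0^{1-\frac{n}{2q}}\|f-g\|_q+C_2\int_0^t\|u(s)-v(s)\|_\infty\,ds,\qquad 0\le t\le t_*,
\end{equation*}
with $C_1,C_2$ depending only on $n,p,q,t_0,\|f\|_\infty,m,m'$ (and $\Omega$). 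By Gronwall's lemma, $\sup_{[0,t_*]}\|u-v\|_\infty\le C_1 t_0^{1-\frac{n}{2q}}e^{C_2 t_0}\|f-g\|_q=:C_3\|f-g\|_q$.

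Finally I would close the argument by a continuation/no-loss-of-regularity step. Choose $\varepsilon>0$ small enough that $C_3\varepsilon<\min\bigl(\sigma,\,\tfrac{1}{2}(m'-m)\bigr)$; note $\|g\|_\infty\le\|f\|_\infty+C\varepsilon$ may be assumed bounded. For $g$ with $\|g-f\|_q\le\varepsilon$, the above bound shows that on $[0,t_*]$ one has $\|v(t,\cdot)\|_\infty\le\|u(t,\cdot)\|_\infty+C_3\varepsilon\le m+\tfrac12(m'-m)<m'$, so the bound $\|v\|_\infty\le m'$ is in fact strict up to $t_*$. Hence $t_*$ cannot be a maximal such time strictly less than $t_0$ (otherwise by continuity $\|v(t_*,\cdot)\|_\infty=m'$, a contradiction), which forces $t_*=t_0$; in particular $v$ exists as a classical solution on $[0,t_0]$, i.e.\ $T_g>t_0$, and $\sup_{t\in[0,t_0]}\|U_g-U_f\|_\infty\le C_3\varepsilon\le\sigma$. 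The main obstacle, and the point requiring the most care, is the continuation step: one must ensure that the a priori smallness of $\|v-u\|_\infty$ on the (a priori short) interval $[0,t_*)$ genuinely prevents $v$ from reaching the level $m'$ before $t_0$, which is exactly what makes the bootstrap close; the $L^q$-to-$L^\infty$ smoothing with integrable singularity (the condition $q>n/2$) is the analytic ingredient that makes the $(f-g)$-term controllable despite $f-g$ being measured only in $L^q$.
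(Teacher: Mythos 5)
Your proof is correct and follows essentially the same route as the paper's: the same Duhamel decomposition of $f h(U_f)-g h(U_g)$, the same $L^q$--$L^\infty$ smoothing estimate (using $q>n/2$) for the $(f-g)$-term, Gronwall, and the same continuation argument via a maximal time on which $\|U_g\|_\infty$ stays below an intermediate level. The only cosmetic difference is that you bound the Lipschitz term in $L^\infty$ (yielding a non-singular Gronwall kernel) whereas the paper measures it in $L^q$ and smooths both terms; both are fine.
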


For the stability of the touchdown time and set, the local type I estimate (\ref{EstimTypeI}) in Proposition~\ref{type I qualitative}
 plays a crucial role. A uniform version is actually needed.
To this end, for given $\gamma>0$, we set 
$$E_\gamma=\Bigl\{g\in E;\ T_g<\infty \ \hbox{ and }\ 
U_g(t,x) \le 1- \gamma \delta(x) (T_g -t)^{\frac{1}{p+1}} \ \hbox{ for all } (t,x)\in [0,T_g) \times \Omega
\Bigr\}.$$

\begin{proposition}[Continuity of the touchdown time]\label{quenching time continuity}
Let $1\le q \le \infty$ with $q>\frac{n}{2}$, $\gamma>0$ and let $f\in E$ be such that $T_f<\infty$ and $\mathcal{T}_f\cap\Omega\neq \emptyset$.
For all $\sigma>0$, there exists $\varepsilon>0$ such that 
$$\hbox{ if $g\in E_\gamma$ and $\|g-f\|_q \leq \varepsilon$, then $|T_g - T_f| < \sigma$.}$$
\end{proposition}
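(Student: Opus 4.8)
The plan is to prove the two inequalities $T_g < T_f + \sigma$ and $T_g > T_f - \sigma$ separately, by exploiting in one direction the continuous dependence from Proposition~\ref{Lq continuity} together with the comparison principle, and in the other direction the uniform type~I estimate built into the definition of $E_\gamma$.

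\textit{Step 1: $T_g < T_f + \sigma$.} Since $\mathcal{T}_f \cap \Omega \neq \emptyset$, we pick an interior touchdown point $x_1 \in \mathcal{T}_f \cap \Omega$, hence a sequence $(t_k, y_k) \to (T_f, x_1)$ with $U_f(t_k, y_k) \to 1$. Fix $\tau \in (0, \sigma)$ and $t_0 \in (0, T_f)$ with $T_f - t_0 < \tau$, chosen so that $U_f(t_0, \cdot)$ is already close to $1$ somewhere in a fixed compact $K \subset \Omega$ containing $x_1$; more precisely, by the above sequence we may choose $t_0 < T_f$ with $\sup_K U_f(t_0, \cdot) \ge 1 - \theta$ for any prescribed small $\theta > 0$. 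Apply Proposition~\ref{Lq continuity} with this $t_0$ to get $\varepsilon > 0$ such that $\|g - f\|_q \le \varepsilon$ forces $T_g > t_0$ and $\|U_g(t_0,\cdot) - U_f(t_0,\cdot)\|_\infty \le \theta$, so $\sup_K U_g(t_0,\cdot) \ge 1 - 2\theta$. Now I would compare $U_g$, starting from time $t_0$, with a subsolution: since $g \in E_\gamma$ we have $g \ge \gamma' \chi_{B'}$ on some small ball $B' \subset K$ where $U_g(t_0,\cdot)$ is close to $1$ (using the type~I bound in the definition of $E_\gamma$ in the contrapositive — near a near-touchdown value, $g$ cannot be too small on a fixed ball; here one must be slightly careful, so alternatively use that $g \ge \mu\chi_B$ is \emph{not} assumed in this proposition — instead simply observe that since $U_g(t_0,\cdot)$ is very close to $1$ on $B'$ and the nonlinearity $g(x)(1-u)^{-p}$ dominates below by a pure ODE once $u$ is close to $1$, with $g$ bounded below by its value times a small positive constant on $B'$ — but in fact the cleanest route is: on $B'$, $U_g$ stays above a spatial subsolution $v_0 \psi_{B'}$ and then solves an ODE comparison that quenches within time $\tau$ provided $1 - U_g(t_0,\cdot)$ is small enough, which we arrange by taking $\theta$ small). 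This forces $T_g \le t_0 + \tau < T_f - t_0 + \tau + t_0 \le T_f + 2\tau < T_f + \sigma$ after adjusting constants. The quantitative ODE comparison is the routine part.

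\textit{Step 2: $T_g > T_f - \sigma$.} This is where the uniform type~I estimate is essential. Suppose toward a contradiction that along a sequence $g_k \to f$ in $L^q$ with $g_k \in E_\gamma$ we have $T_{g_k} \le T_f - \sigma$. Fix $t_0 := T_f - \sigma/2$; then $t_0 > T_{g_k}$ for all $k$, which is impossible once we show $T_{g_k} > t_0$ for large $k$. To do this, note $t_0 < T_f$, so Proposition~\ref{Lq continuity} applies: for $k$ large, $T_{g_k} > t_0$ and $\sup_{[0,t_0]}\|U_{g_k} - U_f\|_\infty \le \sigma'$ for any prescribed $\sigma'$. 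But $T_{g_k} \le T_f - \sigma < t_0$ is the desired contradiction — except that Proposition~\ref{Lq continuity} only \emph{directly} gives $T_{g_k} > t_0$, so this already contradicts $T_{g_k} \le T_f - \sigma$. Actually the subtlety is that Proposition~\ref{Lq continuity} requires $t_0 < T_f$, which holds, so $T_{g_k} > t_0 > T_f - \sigma$ directly, \emph{provided} we do not use any type~I information. So Step 2 follows purely from Proposition~\ref{Lq continuity}, and the role of $E_\gamma$ is confined to Step 1 — or rather, re-examined, the type~I membership $g \in E_\gamma$ is what prevents $U_g$ from quenching "too late" by guaranteeing a lower bound on the quenching profile; its real use is to control the touchdown \emph{set} in the companion Theorem~\ref{semi-continuity quenching set2}, while for the time alone the delicate direction is Step 1.

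\textit{Main obstacle.} The hard part is Step 1: turning the $L^\infty$-closeness of $U_g(t_0,\cdot)$ to a near-$1$ value of $U_f$ into a genuine quenching of $U_g$ within a controlled short time, using only $g \in E_\gamma$ and $\|g-f\|_q \le \varepsilon$ (no pointwise lower bound $g \ge \mu\chi_B$ is assumed here). The resolution is to use the semiflow's comparison principle against a separated-variable subsolution on a small ball, exactly as in Lemma~\ref{basic supersol} but reversed, combined with the elementary ODE $y' = c(1-y)^{-p}$; the quantitative thresholds (how small $\theta$, hence $\varepsilon$, must be) depend on $\gamma$, $f$, $\Omega$, $p$, $\sigma$, which is exactly the allowed dependence.
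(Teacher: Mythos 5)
Your Step 2 (the lower bound $T_g>T_f-\sigma$) is correct and is exactly what the paper does: it follows directly from Proposition~\ref{Lq continuity} applied at $t_0<T_f$ with $T_f-t_0<\sigma$, and no type~I information is needed there.

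Your Step 1, however, has a genuine gap. You try to force quenching of $U_g$ shortly after $t_0$ by a subsolution/ODE comparison on a small ball $B'$, but every version of that argument (cf.\ Lemma~\ref{upper estimate for T}) requires a quantitative pointwise lower bound $g\ge\mu\chi_{B'}$ with $\mu$ above the eigenvalue threshold $\mu_0(p,n)$ times the appropriate power of the radius. No such bound is available here: the hypotheses are only $g\in E_\gamma$ and $\|g-f\|_q\le\varepsilon$, and $L^q$-closeness to $f$ does not transfer pointwise lower bounds (nor is $f$ itself assumed bounded below near $x_1$). Your text visibly circles around this point (``here one must be slightly careful\dots alternatively\dots but in fact the cleanest route is\dots'') without resolving it, and the ``routine'' ODE comparison you defer cannot be carried out. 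The fix is much simpler than what you attempt and is the actual content of the hypothesis $g\in E_\gamma$: evaluate the type~I inequality $U_g(t,x)\le 1-\gamma\,\delta(x)(T_g-t)^{1/(p+1)}$ at the single space--time point $(t_i,x_i)$ where, by Proposition~\ref{Lq continuity}, $U_g(t_i,x_i)\ge\alpha\lambda$ is close to $1$ and $\delta(x_i)\ge c>0$. This gives directly
$$1-\alpha\lambda\ \ge\ c\,\gamma\,(T_g-t_i)^{\frac{1}{p+1}},\qquad\hbox{hence}\qquad T_g\ \le\ t_i+\Bigl(\frac{1-\alpha\lambda}{c\gamma}\Bigr)^{p+1},$$
and letting $\alpha,\lambda\to1$ (with $t_i\to T_f$) yields $\limsup T_g\le T_f$. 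No subsolution, no ODE comparison, and no lower bound on $g$ are needed; the type~I estimate is used as an upper bound on $U_g$ read backwards as an upper bound on $T_g-t$, not ``in the contrapositive'' as a lower bound on $g$.
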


\begin{theorem}[Upper semi-continuity of the touchdown set]\label{semi-continuity quenching set}
Let $1\le q \le \infty$ with $q>\frac{n}{2}$, $\gamma, M>0$ and let $f\in E$ be such that $T_f<\infty$ and $\mathcal{T}_f\subset\subset\Omega$.
For all $\sigma>0$, 
there exist $\varepsilon,\kappa>0$ such that, if
$$\hbox{ $g\in E_\gamma$, $\|g\|_\infty\le M \text{ and } \|g-f\|_q \leq \varepsilon$,}$$
then 
$$U_g(t,x)\le 1-\kappa \qquad \mbox{in } [0,T_g)\times \bigl(\overline\Omega\setminus (\mathcal{T}_f+B(0,\sigma))\bigr),$$
hence in particular
$$\mathcal{T}_g\subset \mathcal{T}_f+B(0,\sigma).$$
\end{theorem}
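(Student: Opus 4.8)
The plan is to prove Theorem~\ref{semi-continuity quenching set} by combining the continuity of the touchdown time (Proposition~\ref{quenching time continuity}) with the $L^q\to L^\infty$ continuity of the semiflow (Proposition~\ref{Lq continuity}) on compact time intervals, and then using the \emph{uniform} type~I estimate encoded in the class $E_\gamma$ to handle the endgame near the touchdown time. The point is that $L^q\to L^\infty$ continuity only controls $U_g$ on $[0,t_0]$ for $t_0<T_f$; it says nothing close to the (possibly shifted) quenching time of $g$. The type~I bound is exactly what bridges this gap.

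\smallskip
\noindent\textbf{Step 1: Set-up and choice of intermediate time.}
Fix $\sigma>0$. Since $\mathcal{T}_f\subset\subset\Omega$, the set $K:=\overline\Omega\setminus(\mathcal{T}_f+B(0,\sigma/2))$ is compact and, writing $d:=\operatorname{dist}(K,\partial\Omega)>0$, the type~I estimate for $g\in E_\gamma$ gives
$$
U_g(t,x)\le 1-\gamma d\,(T_g-t)^{\frac{1}{p+1}}\qquad\text{for all }x\in K,\ t\in[0,T_g).
$$
Thus on $K$, $U_g$ stays below $1-\kappa_1$ provided $T_g-t\ge\theta$, where $\theta>0$ is chosen so that $\gamma d\,\theta^{1/(p+1)}=:\kappa_1>0$ is a fixed constant. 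This handles the ``late'' time window $[T_g-\theta,\,T_g)$ uniformly in $g\in E_\gamma$. Note $\theta$ can be taken as small as we like, which we will need.

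\smallskip
\noindent\textbf{Step 2: The ``early'' time window via $L^q$ continuity.}
Now I would fix $t_0:=T_f-\theta/2$ (after first shrinking $\theta$ if necessary so that $t_0>0$ and $t_0<T_f$). On $\overline\Omega$ the solution $U_f$ is a classical solution on $[0,t_0]$, hence $\sup_{[0,t_0]\times\overline\Omega}U_f=:1-2\kappa_2<1$ for some $\kappa_2>0$ (here I also use that $T_f<\infty$ only to know $t_0$ is below it; the bound away from $1$ on the closed interval is just continuity up to the boundary). By Proposition~\ref{Lq continuity}, there is $\varepsilon_1>0$ such that $\|g-f\|_q\le\varepsilon_1$ forces $T_g>t_0$ and $\sup_{[0,t_0]\times\overline\Omega}|U_g-U_f|\le\kappa_2$, hence $U_g\le 1-\kappa_2$ on $[0,t_0]\times\overline\Omega$ — in particular on $K$.

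\smallskip
\noindent\textbf{Step 3: Closing the gap with the touchdown-time continuity and conclusion.}
It remains to cover the middle window $[t_0,T_g-\theta]$, which is nonempty only if $T_g-\theta>t_0=T_f-\theta/2$, i.e.\ $T_g>T_f+\theta/2$. By Proposition~\ref{quenching time continuity} (applicable since $\mathcal{T}_f\cap\Omega\neq\emptyset$ because $\mathcal{T}_f\subset\subset\Omega$ and $\mathcal{T}_f\neq\emptyset$, the latter holding because $T_f<\infty$), there is $\varepsilon_2>0$ such that $\|g-f\|_q\le\varepsilon_2$ and $g\in E_\gamma$ imply $|T_g-T_f|<\theta/2$, which forces $T_g\le T_f+\theta/2$ and so makes the middle window \emph{empty}. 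Therefore, with $\varepsilon:=\min(\varepsilon_1,\varepsilon_2)$ and $\kappa:=\min(\kappa_1,\kappa_2)$, every $g\in E_\gamma$ with $\|g\|_\infty\le M$ and $\|g-f\|_q\le\varepsilon$ satisfies $U_g\le 1-\kappa$ on $[0,T_g)\times K$, hence a fortiori on $[0,T_g)\times(\overline\Omega\setminus(\mathcal{T}_f+B(0,\sigma)))$. In particular no point of $\overline\Omega\setminus(\mathcal{T}_f+B(0,\sigma))$ can be a touchdown point of $g$, i.e.\ $\mathcal{T}_g\subset\mathcal{T}_f+B(0,\sigma)$.

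\smallskip
\noindent\textbf{Main obstacle.}
The delicate point is the interplay of the three small parameters: the type~I estimate only gives a bound away from~$1$ on $K$ when one stays a fixed amount $\theta$ away from $T_g$, while $L^q$ continuity only controls $[0,t_0]$ with $t_0<T_f$; the two windows overlap (leaving no gap) precisely because the touchdown-time continuity pins $T_g$ to within an arbitrarily small interval around $T_f$. One must choose $\theta$ first (dictating $\kappa_1$ and $t_0$), then invoke time-continuity with tolerance $\theta/2$, then $L^q$-continuity with tolerance $\kappa_2$ on $[0,t_0]$ — the order matters, and one should double-check that $\mathcal{T}_g$ is defined (i.e.\ $T_g<\infty$), which follows since $T_g<T_f+\theta/2<\infty$. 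The role of the hypothesis $\|g\|_\infty\le M$ is mild here (it is inherited in the statement, and could enter only if one wanted quantitative control of $\kappa$ in terms of $M$); the essential structural input is membership in $E_\gamma$, which supplies the \emph{uniform} type~I bound — and this is exactly why Proposition~\ref{type I qualitative} was stated with $\gamma$ depending only on $p,\Omega,M,r$ rather than on the individual solution.
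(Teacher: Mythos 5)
Your proposal contains two genuine gaps, and the first one defeats the whole strategy.

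\textbf{The final time layer is not covered.} The type~I bound $U_g(t,x)\le 1-\gamma\delta(x)(T_g-t)^{1/(p+1)}$ keeps $U_g$ away from $1$ only when $T_g-t\ge\theta$, i.e.\ on the \emph{early} window $[0,T_g-\theta]$; it degenerates as $t\to T_g$ and is vacuous on $[T_g-\theta,T_g)$. You assert the opposite (``this handles the late time window $[T_g-\theta,T_g)$''), and Step~3 of your argument is built on that misidentification. In reality your two windows are $[0,t_0]$ and $[0,T_g-\theta]$, whose union is $[0,\max(t_0,T_g-\theta)]$ and never reaches $T_g$: the interval $\bigl(\max(t_0,T_g-\theta),\,T_g\bigr)$ — precisely where touchdown occurs — remains uncontrolled however the parameters $\theta,t_0,\varepsilon$ are tuned. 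A type~I estimate quantifies the touchdown \emph{rate}; by itself it cannot exclude touchdown anywhere. The paper bridges exactly this gap in Step~1 of its proof: the type~I bound gives $g(x)(1-U_g)^{-p}\le C(T_g-t)^{-p/(p+1)}$, which is integrable in time, and interior parabolic $L^r$-estimates plus a Sobolev embedding in $t$ and a dyadic iteration yield the \emph{uniform H\"older-in-time} estimate $U_g(T_g,x)\le U_g(t,x)+C(\eta,\beta)(T_g-t)^\beta$ on $\Omega_\eta$, valid up to $t=T_g$. It is this estimate, applied at $t=t_0$ together with Propositions~\ref{Lq continuity} and \ref{quenching time continuity}, that transfers the smallness of $U_g(t_0,\cdot)$ to the touchdown time itself. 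Some device of this kind (or the monotonicity $\partial_t U_g\ge0$ combined with a bound on $U_g(T_g,\cdot)$) is indispensable; your proposal has no substitute for it.

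\textbf{The boundary.} The set $K=\overline\Omega\setminus(\mathcal{T}_f+B(0,\sigma/2))$ contains $\partial\Omega$, since $\mathcal{T}_f+B(0,\sigma/2)\subset\subset\Omega$ for small $\sigma$; hence $\mathrm{dist}(K,\partial\Omega)=0$, not $>0$ as you claim. Membership in $E_\gamma$ only provides the bound weighted by $\delta(x)$, so it gives no information near $\partial\Omega$ — this is the ``lack of a type~I estimate up to the boundary'' the paper explicitly flags. Since the conclusion concerns all of $\overline\Omega\setminus(\mathcal{T}_f+B(0,\sigma))$, including a full neighborhood of $\partial\Omega$, a separate argument is needed there; the paper supplies one (Step~3 of its proof) by comparing $U_g$ with the stationary supersolution $1-2\kappa+M\kappa^{-p}\phi$, where $-\Delta\phi=1$ in $\Omega$ and $\phi=0$ on $\partial\Omega$, using $\partial_t U_g\ge0$ and the interior bound already established on $\partial\Omega_\eta$. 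Your outline would need both of these missing ingredients to become a proof.
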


\begin{remark}\label{uniform type 1}
The assumption $g\in E_\gamma$,
i.e. estimate (\ref{EstimTypeI}) with a uniform constant,
can be guaranteed by assuming
$\mu\chi_B\leq g\leq M$, where $M,r>0,\ B\subset \Omega$ is a ball of radius $r$
and $\mu>\mu_0(p,n) r^{-2}$ (cf.~Lemma~\ref{upper estimate for T}).
This is a consequence of Proposition \ref{type I qualitative}.
\end{remark}

\begin{remark}
(i) Theorem \ref{semi-continuity quenching set}
in particular proves that $\mathcal{T}_g$ is also a compact subset of $\Omega$, provided $g$ is close enough to $f$ in $L^q$ norm.
It is unknown whether the compactness assumption on $\mathcal{T}_f$ can be removed. This would be true if we 
knew the analogue of estimate (\ref{EstimTypeI}) without the factor distance to the boundary.

(ii) To ensure that the touchdown set $\mathcal{T}_f$ is compact, we can consider $f$ small enough near the boundary (apply Theorem \ref{global result dim n}), or $\Omega$ convex and $f$ non-increasing near the boundary in the outer direction (this is proved in \cite{G08} by a moving planes argument). Also, if we consider $0<p<1$, then the touchdown set is compact for any $f$ (see \cite{GS}). 
\end{remark}

We note that Theorem \ref{semi-continuity quenching set2} is a direct consequence of 
 Proposition \ref{quenching time continuity} and Theorem \ref{semi-continuity quenching set}, 
 together with Proposition \ref{type I qualitative}.

The semi-continuity property of $\mathcal{T}_f$ in Theorem \ref{semi-continuity quenching set2} can be expressed as 
$$d\bigl(\mathcal{T}_g,\mathcal{T}_f\bigr)\to 0,
\quad\hbox{ as $g\to f$ in $L^q$, $g\in \tilde E$},$$
where $\tilde E$ is defined in \eqref{deftildeE} and
\begin{equation}\label{defhausdorff}
d(A,B)=\displaystyle\sup_{x\in A} d(x,B)
\end{equation}
denotes the usual Hausdorff semi-distance.
Our next result shows that the {\it continuity} of the touchdown set with respect to $f$ fails in general.

\begin{proposition}[Non continuity of the touchdown set]\label{Tdiscontinuous} 
Let $p>0$ and $\Omega=B_R\subset \mathbb{R}^n$.
Let $1\le q <\infty$ with $q>\frac{n}{2}$.
One can find $B\subset \Omega$ a ball of radius $r>0$, $M\ge\mu>\mu_0(p,n) r^{-2}$,
a function $f\in \tilde E$ with $\mathcal{T}_f\subset\subset\Omega$ 
and a sequence $g_i\in \tilde E$, such that
$$g_i\to f \ \hbox{in $L^q$}\quad\hbox{and}\quad 
\liminf_{i\to\infty}\  d\bigl(\mathcal{T}_f,\mathcal{T}_{g_i}\bigr)>0.$$
\end{proposition}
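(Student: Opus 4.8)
The plan is to realize Proposition~\ref{Tdiscontinuous} as a byproduct of the construction in Theorem~\ref{two comp. quench set}(ii) (or its simplified one-dimensional/radial version depicted in fig.~\ref{fig th two comp quench}), exhibiting an explicit profile $f$ whose touchdown set contains points in an inner bump that disappear the moment the bump height is lowered. First I would fix $\Omega=B_R$ and construct a radially symmetric two-(pair-of-)bump profile: an outer bump of height $A_{\rm out}$ supported near a sphere $\{|x|=r_2\}$, chosen large enough (via Lemma~\ref{upper estimate for T}) to force finite touchdown time, and an inner bump of height $h$ supported near a sphere $\{|x|=r_1\}$, with $f$ uniformly small elsewhere (so that, by Theorem~\ref{global result dim n}, $\mathcal{T}_f$ is contained in the union of the two annular neighborhoods and in particular $\mathcal{T}_f\subset\subset\Omega$). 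For a suitable choice of the inner height $h=h_*$, I would arrange that $\mathcal{T}_f$ genuinely meets the inner annulus $A_1$; this is exactly the content obtained in the proof of Theorem~\ref{two comp. quench set}(ii), which shows that for an appropriate plateau height both annuli are hit. One then also records, using the type~I estimate of Proposition~\ref{type I qualitative} and the no-touchdown criterion of Lemma~\ref{basic supersol} applied on the inner annulus, that once the inner height drops below a threshold the inner touchdown points are excluded.

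The second step is to produce the sequence $g_i\to f$ in $L^q$ along which $d(\mathcal{T}_f,\mathcal{T}_{g_i})$ stays bounded away from zero. I would take $g_i$ to be the same profile as $f$ except that the inner bump height is lowered to $h_* - 1/i$ (keeping all supports and the outer bump fixed, and keeping $g_i\in\tilde E$ by choosing $B$ to be a fixed ball inside the outer bump region and $\mu$ accordingly, as in \eqref{deftildeE}). Since the perturbation is localized in the small inner annulus and uniformly bounded, $\|g_i-f\|_q\to 0$. On the other hand, by the no-touchdown argument just mentioned — applied uniformly for all heights $\le h_* - \eta_0$ with $\eta_0>0$ fixed — the touchdown set of $g_i$ does not intersect (a fixed neighborhood of) the inner annulus for all large $i$, so $\mathcal{T}_{g_i}\cap A_1=\emptyset$. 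Combined with the fact that $\mathcal{T}_f$ contains a point $x_*$ in the interior of $A_1$ at positive distance from $\Omega\setminus A_1\supset$ (roughly) $\mathcal{T}_{g_i}$, this yields $d(\mathcal{T}_f,\mathcal{T}_{g_i})\ge d(x_*,\mathcal{T}_{g_i})\ge c_0>0$ for all large $i$, which is the desired conclusion.

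The key quantitative input needed is that the threshold height $h_*$ below which the inner annulus contributes no touchdown point is \emph{strictly less than} the height $h_*$ at which $\mathcal{T}_f$ actually meets $A_1$ — in other words, that there is a genuine discontinuity rather than a tangency. This is guaranteed by the optimality remark after Lemma~\ref{basic supersol}: the no-touchdown criterion \eqref{hypf basic supersol} gives an \emph{open} condition on $\|f\|_{L^\infty}$ near the inner annulus, and the type~I constant $\gamma$ from Proposition~\ref{type I qualitative} is uniform over the relevant class $E_\gamma$ (cf.~Remark~\ref{uniform type 1}), so for all heights in a half-open interval $(0,h_*-\eta_0]$ we have a fixed $k>0$ with $u\le 1-k(T-t)^{1/(p+1)}$ on the inner separating spheres and the smallness inequality holds, uniformly in $i$. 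Meanwhile the construction of Theorem~\ref{two comp. quench set}(ii) forces a touchdown point in $A_1$ precisely at the critical height, not below it.

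I expect the main obstacle to be verifying, with uniform constants, the hypothesis \eqref{hyp basic supersol} of Lemma~\ref{basic supersol} on the inner separating spheres for the whole family $g_i$: one needs a lower bound on the touchdown time $T_{g_i}$ (so that $(T-t)^{1/(p+1)}$ does not degenerate) together with the uniform type~I estimate, and one must check that lowering the inner bump does not inadvertently change which annulus the ``dominant'' touchdown occurs in. This is handled by keeping the outer bump — which controls $T_{g_i}$ through Lemma~\ref{upper estimate for T} and Lemma~\ref{lower estimate for T} — completely fixed, so that $T_{g_i}$ stays in a fixed compact subinterval of $(0,\infty)$, and by invoking Remark~\ref{uniform type 1} to get $g_i\in E_\gamma$ with a single $\gamma$; the rest is then the same comparison computation as in the proof of Theorem~\ref{global result dim n}.
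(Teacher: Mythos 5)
Your overall strategy (a two\mbox{-}bump profile, lower the inner bump, and watch the inner touchdown points disappear) is the right picture, but the mechanism you propose for showing $\mathcal{T}_{g_i}\cap A_1=\emptyset$ does not work. You want to exclude touchdown in the inner annulus for all heights $h\le h_*-\eta_0$ by invoking the supersolution/no\mbox{-}touchdown criterion of Lemma~\ref{basic supersol} (equivalently Theorem~\ref{global result dim n}). That criterion requires the \emph{smallness} condition $\|f\|_{L^\infty}<k^{p+1}/(p+1)$ on the inner region, i.e.\ it only applies for inner heights below a fixed threshold $2\eta$ determined by the type~I constant $\gamma$. The critical height $h_*$ at which touchdown first reaches the inner region satisfies only $h_*\ge 2\eta$ and is in general strictly (and uncontrollably) larger; for heights in $(2\eta,h_*)$ the smallness condition fails, so Lemma~\ref{basic supersol} says nothing about $g_i=f_{h_*-1/i}$. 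The ``optimality remark'' after Lemma~\ref{basic supersol} does not close this gap: it only says the constant in the smallness condition cannot be improved for the ODE, not that the no\mbox{-}touchdown threshold coincides with the actual onset of inner touchdown. As written, your proof establishes $\mathcal{T}_{g_i}\cap A_1=\emptyset$ only for heights $\le 2\eta$, and you have no argument that touchdown occurs in $A_1$ at (or just above) the height $2\eta$.

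The missing idea is to make the critical height \emph{definitionally} the infimum. The paper sets
$h^*:=\inf\{h:\ \mathcal{T}_{f_h}\cap B_1\neq\emptyset\}$ for a family $f_h$ that is continuous in $L^q$ with respect to $h$. Then ``no touchdown in the inner region for $h<h^*$'' holds by definition of the infimum (combined with the a priori localization $\mathcal{T}_{f_h}\subset B_1\cup B_2$ from Theorem~\ref{global result dim n}), and no quantitative supersolution argument near $h^*$ is needed. The fact that touchdown \emph{does} meet $B_1$ at $h=h^*$ itself is then obtained from the upper semicontinuity of the touchdown set (Theorem~\ref{semi-continuity quenching set2}) applied along a sequence $h_i\downarrow h^*$ with $\mathcal{T}_{f_{h_i}}\cap B_1\neq\emptyset$: since $\mathcal{T}_{f_{h_i}}\subset\mathcal{T}_{f_{h^*}}+B(0,\sigma)$ and the two bumps are at distance $>r$ from each other, $\mathcal{T}_{f_{h^*}}$ must meet $B_1$. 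Taking $f=f_{h^*}$ and $g_i=f_{h^*-1/i}$ (whose touchdown sets lie entirely in $B_2$, by the infimum property) then gives $d(\mathcal{T}_f,\mathcal{T}_{g_i})\ge{\rm dist}(B_1,B_2)>0$. Your write\mbox{-}up gestures at this (``precisely at the critical height, not below it'') but attributes it to the wrong tool; without the infimum definition and the semicontinuity theorem, the key step is unproved.
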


Proposition \ref{Tdiscontinuous} will be proved in the next section,
along with Theorem \ref{two comp. quench set}.

\begin{proof}[Proof of Proposition \ref{Lq continuity}]
Set $h(z)=(1-z)^{-p}$. Note that, for any $0<M<1$,
we have $0<h'(z) <L(M):=p(1-M)^{-p-1}$ for $0<z<M$.
Now, fix
$$M:=\max_{0\leq t\leq t_0} \|U_f(t)\|_\infty<1$$
and define
$$\tau_g := \sup \left\lbrace t\in[0,T_g);\ \|U_g(s)\|_\infty \leq \dfrac{1+M}{2} \ \hbox{ for all } s\in [0,t]\right\rbrace.$$

By the variation-of-constants formula and the $L^p$-$L^q$-estimates for the linear heat semigroup, we have
\begin{eqnarray*}
\| (U_f-U_g)(t)\|_\infty &\leq & \displaystyle\int_0^t \|e^{(t-s)\Delta} (f(x)h(U_f)-g(x)h(U_g))\|_\infty \,ds \\
&\leq & \displaystyle\int_0^t (4\pi(t-s))^{-\frac{n}{2q}} \|f(x)h(U_f)-g(x)h(U_g)\|_q \,ds.
\end{eqnarray*}
Now, for all $0<t\leq \min\lbrace t_0 ,\tau_g\rbrace$, we obtain
\begin{eqnarray*}
 \|f(\cdot)h(U_f(t,\cdot))-g(\cdot)h(U_g(t,\cdot))\|_q
&=& \|f(\cdot)h(U_f(t,\cdot))-f(\cdot)h(U_g(t,\cdot))+f(\cdot)h(U_g(t,\cdot))-g(\cdot)h(U_g(t,\cdot))\|_q \\
&\leq & \|f\|_q \| h(U_f(t))-h(U_g(t))\|_\infty + \|h(U_g(t))\|_\infty \|f-g \|_q \\
&\leq & \|f\|_q L\left(\textstyle\frac{1+M}{2}\right) \|(U_f-U_g)(t)\|_\infty + \left(\textstyle\frac{1-M}{2}\right)^{-p}\|f-g\|_q.
\end{eqnarray*}
Therefore,
\begin{eqnarray*}
\|(U_f-U_g)(t)\|_\infty &\leq & (4\pi)^{-\frac{n}{2q}} \|f\|_q L
\left(\textstyle\frac{1+M}{2}\right) \displaystyle\int_0^t(t-s)^{-\frac{n}{2q}}\|(U_f-U_g)(s)\|_\infty \,ds \\
& & + (4\pi)^{-\frac{n}{2q}} \| f-g\|_q\left(\textstyle\frac{1-M}{2}\right)^{-p} \displaystyle\int_0^t(t-s)^{-\frac{n}{2q}}\,ds  \\
&\leq & C_1  \displaystyle\int_0^t(t-s)^{-\frac{n}{2q}}\|(U_f-U_g)(s)\|_\infty \,ds +C_2\| f-g\|_q,
\end{eqnarray*} 
for all $0<t\leq \min \lbrace t_0, \tau_g\rbrace$, where $C_1,C_2>0$ are two constants independent of $g$. Now, applying Gronwall's Lemma, we obtain
\begin{equation}\label{Gronwall}
\|(U_f-U_g)(t)\|_\infty \leq C_3 \| f-g \|_q.
\end{equation}
If we consider $\varepsilon>0$ small enough, then, for all $g$ such that $\| f-g \|_q<\varepsilon$, we have
$$\|(U_f-U_g)(t)\|_\infty \leq \dfrac{1-M}{4}, \qquad \mbox{for } 0<t\leq t_1:=\min\lbrace t_0, \tau_g\rbrace.$$
Therefore, $\|U_g(t_1)\|_\infty < \dfrac{1+M}{2}$, hence $\tau_g > t_1$, i.e. $\tau_g> t_0$.
We deduce that $T_g>t_0$ and the result then follows from (\ref{Gronwall}).
\end{proof}

\begin{proof}[Proof of Proposition \ref{quenching time continuity}]
The lower semicontinuity of the touchdown time is a consequence of Proposition \ref{Lq continuity}.
Therefore we may always assume $T_g>T_f$. By assumption, there exist sequences $x_i\in \Omega$ and $t_i<T_f$
such that $x_i\to x_0\in \Omega$, $t_i\to T_f$ and $U_f(t_i,x_i)\to 1$. We may assume $\delta(x_i)\ge c>0$.

Now fix $0<\lambda<1$, pick $i$ such that $U_f(t_i,x_i)\ge \lambda$ and next take $0<\alpha <1$. 
As a consequence of Proposition \ref{Lq continuity}, there exists $\varepsilon>0$ such that if $\|g-f\|_q<\varepsilon$, then $U_g(t_i,x_i)\geq \alpha \lambda$. Since $g\in E_\gamma$, we have
$$1-\alpha \lambda \geq 1-U_g(t_i,x_i) \geq c \gamma (T_g - t_i)^{\frac{1}{p+1}} \geq c \gamma (T_g - T_f)^{\frac{1}{p+1}}.$$
Therefore,
$$\limsup_{\|g-f\|_q\to 0} T_g\leq T_f+\left(\dfrac{1-\alpha \lambda}{c\gamma}\right)^{p+1}.$$
The result follows by letting $\alpha\to 1$ and then $\lambda\to 1$.
\end{proof}

\begin{proof}[Proof of  Theorem \ref{semi-continuity quenching set}]
The proof is more delicate. It is based on parabolic regularity, comparison arguments
and uniform H\"older estimates in time for $u$ up to the touchdown time. 
The latter follow from a key integrability property in time for the RHS of the PDE in (\ref{quenching problem}) (see \eqref{Keyint}),
which is a consequence of the type I estimate.

{\bf Step 1.} {\it Uniform H\"older estimates in time.}
For each $\eta>0$, we recall the notation
$$\Omega_\eta:= \left\lbrace x\in \Omega; \delta(x)>\eta \right\rbrace.$$
We claim that for all $\eta>0$ and $\beta\in (0,1/(p+1))$, there exists $C(\eta,\beta)>0$ such that 
for all $g\in E_\gamma$ with $\|g\|_\infty\le M$, we have
\begin{equation}\label{control v(T_eps)}
U_g(T_g,x)\le U_g(t,x)+ C(\eta,\beta)(T_g-t)^\beta, \quad 0<t<T_g, \ x\in \Omega_\eta.
\end{equation}

Set $a=\dfrac{p}{p+1} <1$ and take $\eta>0$.
Let $g\in E_\gamma$ with $\|g\|_\infty\le M$. We have 
\begin{equation}\label{Keyint}
|\partial_tU_g - \Delta U_g| = g(x)(1-U_g(t,x))^{-p} \leq C(\eta)(T_g - t)^{-a} \quad\hbox{ in $(0,T_g)\times \Omega_{\eta/2}$},
\end{equation}
with $C(\eta)>0$ independent of $g$. Moreover, by Lemma \ref{lower estimate for T},
 we have $T_g\ge \tau_0$ with $\tau_0\in (0,1)$ independent of $g$.

Now, for $\tau\in (0,\tau_0/2)$, we define $w(t,x)=\tau^{a}U_g(t,x)$, which satisfies
$$|w_t-\Delta w| \leq C(\eta) \quad \mbox{and} \quad 0\leq w\leq 1
\quad \hbox{ in $(0,T_g-\tau)\times \Omega_{\eta/2}$},$$
along with $w(0,\cdot)\equiv 0$.
By interior parabolic regularity (see e.g. \cite[Theorem 48.1]{QS}), 
for all $r\in(1,\infty)$, we deduce
$$\|w_t\|_{L^r(\Sigma_{g,\tau})} + \|D^2w\|_{L^r(\Sigma_{g,\tau})} \leq C(\eta,r),
\quad \hbox{ where } \Sigma_{g,\tau}:=(0,T_g-\tau)\times \Omega_\eta,$$
with $C(\eta,r)>0$ independent of $g$ and $\tau$.
Using Sobolev embedding in the time variable, we obtain, for all $\alpha\in (0,1)$,
$$\|w\|_{C^\alpha_t(\Sigma_{g,\tau})}\leq C(\eta,r).$$
Consequently, we have 
\begin{equation}\label{Holder estimate}
U_g(s,x)-U_g(t,x) \leq C(s-t)^\alpha (T_g - s)^{-a}, \quad 0<t<s<T_g, \ x\in \Omega_\eta.
\end{equation}
Here and until the end of Step 1, $C>0$ denotes a positive constant, depending on $\eta$, $\alpha$,
but independent of $g$.

Now, for fixed $0<t<T_g$, we consider the sequence $s_i = T_g -(T_g- t)2^{-i}$, which satisfies
\begin{equation}\label{sequence}
s_{i+1} - s_i = (T_g - t)2^{-i-1} = T_g - s_{i+1}.
\end{equation}
Fix $\alpha\in (a,1)$. From (\ref{Holder estimate}) and (\ref{sequence}), we have
$$
U_g(s_{i+1},x)-U_g(s_i,x) \leq  C(s_{i+1} - s_i)^\alpha (T_g - s_{i+1})^{-a}
= C (T_g -s_{i+1})^{\alpha -a} 
= C [(T_g- t)2^{-i-1}]^{\alpha - a} 
$$
and iterating, we obtain
$$U_g(s_{i+1},x)-U_g(t,x) \leq C(T_g -t)^{\alpha - a}\sum_{j=0}^i 2^{-(j+1)(\alpha-a)}.$$
Claim (\ref{control v(T_eps)}) follows by letting $i\to \infty$.

{\bf Step 2.} {\it No touchdown away from $\mathcal{T}_f$ and from $\partial\Omega$.}
Let $\sigma,\eta>0$. We claim that there exists $\kappa=\kappa(\sigma)>0$
(independent of $\eta$) and $\eps=\eps(\sigma,\eta)>0$, 
such that for all $g\in E_\gamma$ with $\|g\|_\infty\le M$, if $\|g-f\|_q\le \eps$, then
\begin{equation}\label{U_gbound1}
U_g(T_g,x)\le 1-2\kappa\qquad \mbox{in } \overline{\Omega_\eta} \setminus A_\sigma,
\end{equation}
where $A_\sigma:=\mathcal{T}_f+B(0,\sigma)$.

Choose any $\beta\in (0,1/(p+1))$. As a consequence the definition of $\mathcal{T}_f$, 
there exists $\kappa=\kappa(\sigma)\in (0,1/5)$ such that
\begin{equation}\label{u(T_0)}
U_f(t,x) \leq 1-5\kappa \qquad \mbox{in } [0,T_f)\times (\Omega \setminus A_\sigma).
\end{equation}
Set $t_0=\max\bigl(0,T_f-\bigl(\frac{\kappa}{C_\eta}\bigr)^{1/\beta}\bigr)$,
where $C_\eta=C(\eta,\beta)$ is given by (\ref{control v(T_eps)}).
By Proposition \ref{Lq continuity}, there exists $\varepsilon=\varepsilon(\sigma,\eta)$ such that, for all $g\in E$, if $\|g-f\|_q\le\varepsilon$, then 
$$U_g(t_0,x)\le U_f(t_0,x)+\kappa\le 1-4\kappa
\qquad \mbox{in } [0,t_0]\times (\Omega \setminus A_\sigma).$$
Applying (\ref{control v(T_eps)}), if follows that 
\begin{eqnarray*}
U_g(T_g,x)
&\le& U_g(t_0,x)+C_\eta(T_g-T_f)^\beta+C_\eta(T_f-t_0)^\beta \\
&\le& 1-3\kappa+C_\eta(T_g-T_f)^\beta
\qquad \mbox{in } \overline{\Omega_\eta} \setminus A_\sigma.
\end{eqnarray*}
The Claim then follows from Proposition \ref{quenching time continuity}.

{\bf Step 3.} {\it No touchdown near $\partial\Omega$ and conclusion.}
We shall use a supersolution argument to exclude touchdown on $\Omega\setminus\Omega_\eta$ 
for $g$ close to $f$ and some $\eta>0$ (independent of $g$).

Since $\mathcal{T}_f\subset\subset \Omega$, we may fix $\sigma_0>0$ sufficiently small, such that $A_{\sigma_0}\subset\subset \Omega$.
Set $\kappa=\kappa(\sigma_0)$, given by Step 2.
Let $W(x) = 1-2\kappa + K\phi(x)$, where $K=M \kappa^{-p}$ and $\phi$ is the solution of $-\Delta \phi =1$ in $\Omega$, with $\phi=0$ on $\partial\Omega$.
We can choose $\eta>0$ small enough such that  $K\phi (x)<\kappa$ in $\Omega\setminus \Omega_\eta$, so that
$$-\Delta W = K \geq \dfrac{M}{(1-W)^p} \quad \mbox{in } \Omega\setminus\Omega_\eta.$$
Taking $\eta>0$ smaller if necessary, we may also assume $\partial\Omega_\eta\cap A_{\sigma_0}=\emptyset$.
For all $g\in E_\gamma$ with $\|g\|_\infty\le M$ and $\|g-f\|_q\le \eps(\sigma_0,\eta)$, it then follows from \eqref{U_gbound1}
and $\partial_tU_g\ge 0$ that
$$U_g(t,x)\le 1-2\kappa\le W(x)  \qquad \mbox{on } [0,T_g)\times \partial\Omega_\eta.$$
Since $W\ge 0$, it follows from the comparison principle, applied on $[0,T_g)\times (\Omega\setminus \Omega_\eta)$, that
\begin{equation}\label{U_gbound2}
U_g(t,x)\le W(x) \le 1-\kappa \qquad \mbox{in } [0,T_g)\times (\overline\Omega\setminus \Omega_\eta).
\end{equation}
Finally combining \eqref{U_gbound2} and \eqref{U_gbound1} with the $\eta$ just chosen and any $\sigma>0$,
we conclude that, for all $g\in E_\gamma$ with $\|g\|_\infty\le M$, if $\|g-f\|_q\le \min(\eps(\sigma,\eta),\eps(\sigma_0,\eta))$, 
then
$$U_g(t,x)\le W(x) \le 1-\kappa \qquad \mbox{in } [0,T_g)\times (\overline\Omega\setminus A_\sigma).$$
The Theorem follows.
\end{proof}

\goodbreak


\section{Proof of Theorems \ref{single point quenching}, \ref{theorem one well} and \ref{two comp. quench set}}

\begin{proof}[Proof of Theorem \ref{single point quenching}]
{\bf Step 1.} {\it Estimates of $U_g$.}
Let $g$ satisfy the assumptions of the Theorem for some $\eps>0$.
Since $U_g$ (and $U_f$) is radially symmetric, we shall indifferently write $U_g(t,x)$ or $U_g(t,r)$ with $r=|x|$.
It is known from  \cite{DL89}, \cite{G1}, \cite{G08} that $\mathcal{T}_f=\{0\}$.

We first observe that, taking $\eps>0$ small enough, assumptions \eqref{gprimerho} and \eqref{gLq} guarantee that
\begin{equation}\label{gbigrho}
g(x)>\mu_0 \rho^{-2}\quad\hbox{on $B_\rho$.}
\end{equation}
Indeed for given $\delta>0$, we may choose $s_0\in (0,(R-\rho)/2]$ sufficiently small 
(depending only on $f,\delta$), such that 
$$\sup\,\bigl\{|f(r+s)-f(r)|;\, 0\le r\le \rho,\, 0\le s\le 2s_0\bigr\}\le \delta/2.$$
By \eqref{gprimerho}, for all $r\in [0, \rho]$ and $s\in [0,2s_0]$, we then have
\begin{eqnarray*}
g(r)
&\ge& g(r+s)-\eps R \\
&\ge&f(r)-|f(r+s)-f(r)|-|g(r+s)-f(r+s)|-\eps R \\
&\ge& f(r)-|g(r+s)-f(r+s)|-\eps R-\delta/2.
\end{eqnarray*}
Averaging in $s\in [s_0,2s_0]$ and using H\"older's inequality and \eqref{gLq}, we obtain, for all $r\in [0, \rho]$,
\begin{eqnarray*}
g(r)
&\ge& f(r)-s_0^{-1}\int_{s_0}^{2s_0}|g(r+s)-f(r+s)|\, ds-\eps R-\delta/2 \\
&\ge& f(r)-s_0^{-n}\int_{s_0}^{2s_0}|g(r+s)-f(r+s)|(r+s)^{n-1}\, ds-\eps R-\delta/2 \\
&\ge& f(r)-C(n,R,q)s_0^{-n}\|g-f\|_q-\eps R-\delta/2\ge f(r)-\delta,
\end{eqnarray*}
for $\eps>0$ sufficiently small. In view of our assumptions on $f$, property \eqref{gbigrho} follows by choosing $\delta$ 
sufficiently small.

Owing to \eqref{gbigrho}, we have $T_g<\infty$ and, by Remark \ref{uniform type 1}, 
$g\in E_\gamma$ for some $\gamma>0$ independent of $g$.
Also, in view of Theorem \ref{semi-continuity quenching set}, we may assume $T_g>t_0:=T_f/2$ and
\begin{equation}\label{U_gbound3}
U_g(t,x)\le 1-\kappa \qquad \mbox{in } [0,T_g)\times \{\rho/4\le |x|\le R\},
\end{equation}
for some $\kappa>0$ independent of $g$. By parabolic estimates, it follows that
\begin{equation}\label{U_gbound4}
\|U_g\|_{C^{1+\nu/2,2+\nu}([0,T_g]\times \{\rho/2\le |x|\le R\})} \le C_1.
\end{equation}
for some $C_1, \nu>0$ independent of $g$.

Next we claim that, for any given $t_1\in (0,T_f)$, we have
\begin{equation}\label{U_gClaim1}
\hbox{$T_g>t_1$ and $U_g(t_1,\cdot)$ converges to $U_f(t_1,\cdot)$ in $C^2(\overline B(0,\rho /2))$ as $\eps\to 0$.}
\end{equation}
The fact that $T_g>t_1$ for $\eps>0$ small follows from Proposition \ref{quenching time continuity}.
To prove the convergence, we first note that, by Proposition \ref{Lq continuity}, we have 
\begin{equation}\label{U_gCV0}
\sup_{t\in [0,t_1]} \|U_g(t,\cdot)-U_f(t,\cdot)\|_\infty\to 0,\quad \hbox{ as $\eps\to 0$.}
\end{equation}
In particular we can assume $\sup_{t\in [0,t_1]} \|U_g(t,\cdot)\|_\infty\le c<1$.
Moreover, our assumptions guarantee that $\|g\|_{C^1(\overline B(0,\rho))}\le C$, with $C>0$ independent of $g$.
It then follows from standard parabolic estimates that 
$$\|U_g\|_{C^{\nu/2,\nu}([0,t_1]\times \overline B(0,3\rho /4))} \le C$$
for some $\nu>0$, and next that 
$$\|U_g\|_{C^{1+\nu/2,2+\nu}([0,t_1]\times \overline B(0,\rho /2))} \le C.$$
Using compact embeddings, we deduce that, for any sequence $g_i$ 
satisfying the assumptions of the Theorem with $\eps=\eps_i\to 0$,
there exists a subsequence of $U_{g_i}(t_1,\cdot)$ which converges in $C^2(\overline B(0,\rho/2))$ to some limit $W$.
By \eqref{U_gCV0} we must have $W=U_f(t_1,\cdot)$ and property \eqref{U_gClaim1} follows.

{\bf Step 2.} {\it Monotonicity properties of $U_g$.}
We first claim that 
\begin{equation}\label{U_gClaim2}
\partial_r U_f<-c_0r \quad \text{in} \ [t_0,T_f)\times (0,\rho],
\end{equation} 
for some constant $c_0>0$.
To prove \eqref{U_gClaim2}, we set $B_+(0,R)=B(0,R)\cap\{x_1>0\}$. 
It follows from our assumptions that $z:=\partial_{x_1} U_f\le 0$ in $(0,T_f)\times B_+(0,R)$.
Recalling that $f\in C^1(\overline B_\rho)$ and using parabolic regularity, we deduce that $z$ is a (strong) subsolution
 of the heat equation 
in $Q:=(0,T_f)\times B_+(0,\rho)$, namely:
$$z_t-\Delta z = \dfrac{pf(0)z}{(1-U_f)^{p+1}}\le 0 \quad \text{a.e. in } Q.$$
Since $z=0$ on $\{x_1=0\}$ it follows from the strong maximum principle and the Hopf Lemma that
$$z(t,x_1,0,\cdots,0)\le -c_0x_1 \quad \text{for all } (t,x_1)\in [t_0,T_f)\times [0,\rho].$$
Claim \eqref{U_gClaim2} follows by observing that $\partial_r U_f(t,x)=\partial_{x_1}U_f(t,|r|,0,\cdots,0)$.

We next choose $t_1\in (t_0,T_f)$ such that 
\begin{equation}\label{U_gbound5}
C_1|T_f-t_1|^{\nu/2}\le \frac{c_0\rho}{8},
\end{equation} 
where the constants $C_1, c_0$ are given by \eqref{U_gbound4}, \eqref{U_gClaim2}, respectively.
We claim that if $\eps$ is sufficiently small, then
\begin{equation}\label{U_gClaim3}
\partial_r U_g(t,\rho/2)\le -\frac{c_0\rho}{16}, \quad\hbox{ for all $t\in [t_1,T_g)$.}
\end{equation} 
To prove \eqref{U_gClaim3}, we observe that, by \eqref{U_gClaim2} and \eqref{U_gClaim1}, 
if $\eps$ is sufficiently small, then we have 
$$\partial_rU_g(t_1,\rho/2)\le \partial_rU_f(t_1,\rho/2)+\frac{c_0\rho}{4}\le -\frac{c_0\rho}{4}.$$
Applying \eqref{U_gbound4}, \eqref{U_gbound5} and Proposition \ref{quenching time continuity},
 we deduce that, if $\eps$ is sufficiently small then, for all $t\in [t_1,T_g)$,
\begin{eqnarray*}
\partial_rU_g(t,\rho/2)
&\le& \partial_rU_g(t_1,\rho/2)+C_1|T_f-t_1|^{\nu/2}+C_1|T_g-T_f|^{\nu/2} \\
&\le& -\frac{c_0\rho}{4}+\frac{c_0\rho}{8}+C_1|T_g-T_f|^{\nu/2} \\
&\leq & -\frac{c_0\rho}{4}+\frac{c_0\rho}{8}+\frac{c_0 \rho}{16} = -\frac{c_0 \rho}{16},
\end{eqnarray*}
which proves \eqref{U_gClaim3}.

Then we claim that there exists a constant $c_1>0$ such that,
if $\eps$ is sufficiently small,
\begin{equation}\label{U_gClaim4}
\partial_r U_g(t_1,r)\le -c_1 r, \quad\hbox{ for all $r\in [0,\rho /2]$.}
\end{equation} 
To prove \eqref{U_gClaim4}, we note that, by \eqref{U_gClaim2}, 
there exists $\ell\in (0,\rho)$ such that $\partial^2_r U_f(t_1,r)\le -c_0/2$ for all $r\in [0,\ell]$
and $\partial_r U_f(t_1,r)\le -c_0\ell$ for all $r\in [\ell,\rho/2]$.
By property \eqref{U_gClaim1}, we deduce that if $\eps$ is sufficiently small, then
$\partial^2_rU_g(t_1,r)\le -c_0/4$ for all $r\in [0,\ell]$, which, after integration, gives
$$
\partial_r U_g(t_1,r) \leq - \frac{c_0}{4}r, \quad\hbox{ for all $r\in [0,\ell].$}
$$
Also by property \eqref{U_gClaim1}, if $\eps$ is small enough, we have
$$
\partial_r U_g (t_1,r) \leq -\frac{c_0\ell}{2} \leq -\frac{c_0\ell}{\rho} r, \quad\hbox{ for all $r\in [\ell,\rho/2].$}
$$
Hence, \eqref{U_gClaim4} follows by taking $c_1 = \min \left\lbrace \frac{c_0}{4} ,\frac{c_0 \ell}{\rho} \right\rbrace$.

{\bf Step 3.} {\it Auxiliary function and conclusion.}
In what follows, omitting the subscript $g$ without risk of confusion, we will use the notation $u=U_g$. 
Following the method in \cite{FM}, we define the auxiliary function
$$J(t,r) := w(t,r) + \eta a(r) h(u),\quad (t,r)\in [t_1,T_g)\times [0,\rho/2],$$
where $w(t,r) = r^{n-1} u_r, \ \ a(r) = r^n, \ \ h(u) = (1-u)^{-\gamma}$ and $\eta,\gamma>0$ are constants to be chosen later.

We first look at the parabolic boundary of $[t_1,T_g)\times (0,\rho/2)$. By \eqref{U_gClaim4}, for all $r\in [0,\rho/2)$, we have
\begin{equation}\label{t_1 single point}
J(t_1,r)  =  r^{n-1} u_r(t_1,r) + \eta \frac{r^n}{(1-u (t_1,r))^\gamma} \\
          \leq  r^n \left(-c_1 + \frac{\eta}{(1-\|u(t_1)\|_\infty)^\gamma}\right) \leq 0,        
\end{equation}
provided $\eta \leq c_1  (1-\|u(t_1)\|_\infty)^\gamma$.
We also have $J(t,0) = 0$, for all $t\in [t_1,T_g)$, and 
by \eqref{U_gClaim3} and \eqref{U_gbound3}, we have, for all $t\in [t_1,T_g)$,
\begin{equation}\label{rho/2 single point}
J\big(t,\rho/2\big)  =  \left( \frac{\rho}{2}\right)^{n-1} u_r (t,\rho/2) + \eta \frac{(\rho/2)^n}{(1-u (t,\rho/2))^\gamma} 
          \leq  \left( \frac{\rho}{2}\right)^n \left(- \frac{c_0 }{8} + \frac{\eta}{\kappa^\gamma}\right) \leq 0,        
\end{equation}
provided $\eta \leq \dfrac{c_0 \kappa^\gamma}{8}$.

Next, we note that $u$ and $w$ respectively solve the equations
\begin{equation*}\begin{array}{l}
u_t - u_{rr} - \dfrac{n-1}{r} u_r = g(r)H(u), \\
w_t - w_{rr} + \dfrac{n-1}{r} w_r = g(r) H'(u) w + g'(r) r^{n-1} H(u),
\end{array}
\end{equation*}
where $H(u)=(1-u)^{-p}$.
Omitting the variables $r$ and $u$ from now on without risk of confusion, we compute 
\begin{eqnarray*}
J_t &=& w_t + \eta a h' u_t, \\
J_r &=& w_r + \eta a' h + \eta a h' u_r, \\
J_{rr} &=& w_{rr} + \eta a'' h + 2\eta a'h' u_r + \eta a h'' u_r^2 + \eta a h' u_{rr}. 
\end{eqnarray*}
Using these identities and $w = J - \eta a h$, we obtain
(a.e. in $[t_1,T_g)\times (0,\rho/2)$):
\begin{equation*}\begin{array}{l}
   J_t - J_{rr} + \frac{n-1}{r} J_r \\
   \noalign{\vskip 2mm}
 \qquad \qquad = w_t - w_{rr} + \frac{n-1}{r} w_r + \eta a h' \left( u_t- u_{rr} - \frac{n-1}{r} u_r \right) \\
 \noalign{\vskip 2mm}
 \qquad \qquad \ \ + \frac{n-1}{r} \eta a' h - \eta a'' h - 2 \eta a'h'u_r - \eta a h'' u_r^2 + 2 \eta a h' \frac{n-1}{r} u_r \\
  \noalign{\vskip 2mm}
 \qquad \qquad = gH'w + g'r^{n-1}H + \eta a h' gH - 2 \eta h'w - \eta a h'' u_r^2  \\
 \noalign{\vskip 2mm}
 \qquad \qquad \leq [p g (1-u)^{-p-1} - 2\gamma \eta  (1-u)^{-1-\gamma}]w 
 + \gamma \eta g r^n (1-u)^{-p-1-\gamma} + g' r^{n-1}(1-u)^{-p} \\
 \noalign{\vskip 2mm}
 \qquad \qquad = b(t,r) J + r^{n-1} (1-u)^{-p} [g' - (p-\gamma)\eta g r (1-u)^{-\gamma-1} 
 + 2 \gamma \eta^2 r(1-u)^{p-2\gamma -1} ] \\
 \noalign{\vskip 2mm}
 \qquad \qquad = b(t,r) J + r^{n-1} (1-u)^{-p} \big[  g' - \eta r (1-u)^{-\gamma-1}\big( (p-\gamma) g - 2\gamma \eta (1-u)^{p-\gamma}  \big)  \big],
\end{array} \end{equation*}
where $b$ is a bounded function on $(0,\rho/2)\times [t_1, T_g-\tau]$ for each $\tau>0$.
For any $\gamma \in [0,p)$, using assumption \eqref{gprimerho}, it follows that
$$ J_t - J_{rr} + \frac{n-1}{r} J_r - b(t,r) J
  \le r^n (1-u)^{-p} \big[  \eps - \eta (1-u)^{-\gamma-1}\big( (p-\gamma)g - 2\gamma \eta \big)  \big].$$
Recalling from Step 1 that $g$ is uniformly positive in $B_\rho$, we can choose $\eta$ small enough such that 
$$(p-\gamma) g(r) \ge 3\gamma \eta \quad \text{on } B_{\rho/2}.$$
Taking $\eps\le \gamma \eta^2$, we finally obtain
$$  J_t - J_{rr} + \frac{n-1}{r} J_r - b(t,r) J
 \le r^n (1-u)^{-p} \big(\eps -\gamma \eta^2\big)\le 0 \quad \text{a.e. in $[t_1,T_g)\times (0,\rho/2)$}.$$

In view of this inequality, together with \eqref{t_1 single point} and \eqref{rho/2 single point}, it follows from the maximum principle that $J\leq 0$, hence
$$u_r \leq - \eta r (1-u)^{-\gamma}, 
\quad\hbox{ for all $(t,r) \in [t_1,T_g)\times [0,\rho/2).$}$$
After integrating this inequality in space, we obtain
$$(1-u(t,r))^{\gamma +1}\ge \frac{\gamma +1}{2}\eta r^2,
 \quad\hbox{ for all $(t,r) \in [t_1,T_g)\times [0,\rho/2),$}$$
 which guarantees $\mathcal{T}_g = \{0\}$.
\end{proof}

\begin{remark}
(i) In view of the previous proof, we obtain the following estimate of the final touchdown profile of the solution near the origin:
$$1-u(T,r) \geq c r^{\frac{2}{p+1}+\eps}, \quad\text{as} \ r\to 0, $$ 
for any $\eps>0$ and $c=c(\eps)>0$.
For more accurate results regarding the quenching profile 
in the case $f=Const.$, see \cite{FG93}.

(ii) We point out that under the stronger, global assumption
 that $g\in C^1(\overline\Omega)$ and $-M\le g'(r) \leq \eps r$ on $[0,R]$,
  we can prove single point touchdown at the origin without using the semicontinuity property of the touchdown set. 
To this end, in the above proof, one considers $J$ on the whole cylinder $[t_1,T_g)\times (0,R)$ and uses the Hopf lemma to ensure $J\le 0$ at $r=R$. In this case, we only use Proposition \ref{Lq continuity}, along with the hypothesis $\|g-f\|_q \leq \varepsilon$,
to ensure that $J(t_1,\cdot)\le 0$ at some time $t_1 <T_g$.
\end{remark}

\begin{proof}[Proof of Theorem \ref{theorem one well}]
It is a direct consequence of Theorem \ref{semi-continuity quenching set}, 
Remark \ref{uniform type 1} and Theorem~\ref{local result dim n}.
\end{proof}

\begin{proof}[Proof of Theorem \ref{two comp. quench set} and Proposition \ref{Tdiscontinuous}]
We just need to prove assertion (i) of Theorem \ref{two comp. quench set}. Assertion (ii) follows by the same arguments together with the radial symmetry of the domain and profile.

In order to construct this example, 
 we set $B_i=B(x_i,2r)$, with $r\in (0,\rho/2)$ chosen sufficiently small so that
$$\min \Big(\text{dist}(B_1,B_2),\text{dist}(B_1,\partial \Omega), \text{dist}(B_2,\partial\Omega)\Big)>r.$$
Denote $\tilde{B}_i=B(x_i,r)$.
Choose $\mu>\mu_0(p,n) r^{-2}$. By Lemma~\ref{upper estimate for T}, 
\begin{equation}\label{condg1}
\hbox{for any $g\in E$ such that $g\ge \mu\chi_{\tilde{B}_i}$ with $i=1$ or $2$,
we have $T_g\le  \frac{1}{(p+1)(\mu - \mu_0(p,n)r^{-2})}<\infty$.}
\end{equation}
Set 
$$\hat E=\bigl\{g\in E;\,2\mu\ge g\ge \mu\chi_{\tilde{B}_i}, \hbox{ with $i=1$ or $2$}\bigr\}.$$
By Theorems \ref{local result dim n} and \ref{global result dim n}, 
we deduce the existence of $\eta=\eta(p,\Omega,\mu,r)\in (0,\mu/2)$ such that 
\begin{equation}\label{condg}
\hbox{if $g\in \hat E$ and $g\le 2\eta$ in $\overline B_i$ for some $i\in\{1,2\}$,
then $\mathcal{T}_g\cap B_i=\emptyset$}
\end{equation}
and
\begin{equation}\label{condg2b}
\hbox{if $g\in \hat E$ and $g\le 2\eta$ in $\overline{\Omega}\setminus [B_1\cup B_2]$, 
then $\mathcal{T}_g\subset B_1\cup B_2.$}
\end{equation}

Now, for a fixed $q\ge 1$ and $\frac{n}{2}<q<\infty$, we consider a continuous map $h\mapsto f_h$ from $[\eta,2\mu]$ to $(E, \|\cdot\|_q)$ with the following properties.
Each $f_h$ 
satisfies
\begin{equation*}
f_h (x) = \left\{
\begin{array}{ll}
h & \text{for } x\in \tilde{B}_1, \\
2\mu + \eta - h & \text{for } x\in \tilde{B}_2, \\
\eta & \text{for } x\in \overline{\Omega}\setminus [B_1\cup B_2],
\end{array}
\right.
\end{equation*}
together with
$$\max_{\overline B_i} f_h = f_h(x_i), \qquad \text{for } i=1,2.$$
By \eqref{condg1}, \eqref{condg2b}, we have
\begin{equation}\label{quench in B1B2}
T_{f_h}<\infty \quad \text{and} \quad \mathcal{T}_{f_h} \subset B_1\cup B_2,
\qquad \text{for all } h\in [\eta,2\mu].
\end{equation}
In addition, by \eqref{condg}, \eqref{condg2b}, we have
\begin{equation}\label{quench in B2}
\mathcal{T}_{f_h} \subset B_2, \qquad \text{for all } h\in [\eta, 2\eta]
\end{equation}
and
\begin{equation}\label{quench in B1}
\mathcal{T}_{f_h} \subset B_1, \qquad \text{for all } h\in [2\mu-\eta , 2\mu]. 
\end{equation}

Now, we define
\begin{equation}\label{defhstar}
h^* := \inf\, \bigl\{ h\in [\eta,2\mu], \ \ \text{such that} \ \mathcal{T}_{f_h} \cap B_1 \neq \emptyset \bigr\}.
\end{equation}
By \eqref{quench in B2}, \eqref{quench in B1}, we know that $2\eta\le h^*\le 2\mu - \eta$. 
By the definition of $h^*$ and \eqref{quench in B1B2}, we have 
$$
\mathcal{T}_{f_h} \subset B_2, \qquad \text{for all} \ h\in [\eta , h^*),
$$
and there exists a sequence $h_i\downarrow h^*$ such that 
$$
\mathcal{T}_{f_{h_i}} \cap B_1 \neq \emptyset.
$$
Since $h\mapsto f_h$ is continuous in $L^q$ with $q$ as above, we may apply Theorem \ref{semi-continuity quenching set2}
 and \eqref{quench in B1B2} to deduce that
\begin{equation}\label{hstar2}
\mathcal{T}_{f_{h^*}} \cap B_1 \neq \emptyset \quad \text{and} \quad \mathcal{T}_{f_{h^*}} \cap B_2 \neq \emptyset.
\end{equation}
This completes the proof of Theorem \ref{two comp. quench set}.
Finally, in view of \eqref{defhstar} and the second part of \eqref{hstar2}, Proposition \ref{Tdiscontinuous} follows by considering the sequence $g_i=f_{h^*-1/i}$.
\end{proof}

\medskip
{\bf Acknowledgments.}
The second author is partially supported by the Labex MME-DII (ANR11-LBX-0023-01).
\medskip

\end{document}